\newtheorem{theorem}{Theorem}[section]
\newtheorem{corollary}[theorem]{Corollary}
\newtheorem{lemma}[theorem]{Lemma}
\newtheorem{proposition}[theorem]{Proposition}
\theoremstyle{definition}
\newtheorem{definition}[theorem]{Definition}
\newtheorem{remark}[theorem]{Remark}
\newtheorem{example}[theorem]{Example}
\newtheorem{question}[theorem]{Question}
\newtheorem{conjecture}[theorem]{Conjecture}
\newcommand{\rank}{{\rm rank}}
\newcommand{\im}{{\rm im}}
\newcommand{\mdim}{{\rm mdim}}
\newcommand{\ord}{{\rm ord}}
\newcommand{\cB}{{\mathcal B}}
\newcommand{\cH}{{\mathcal H}}
\newcommand{\cW}{{\mathcal W}}
\newcommand{\cD}{{\mathcal D}}
\newcommand{\cC}{{\mathcal C}}
\newcommand{\cU}{{\mathcal U}}
\newcommand{\cM}{{\mathcal M}}
\newcommand{\cN}{{\mathcal N}}
\newcommand{\cF}{{\mathcal F}}
\newcommand{\cV}{{\mathcal V}}
\newcommand{\fM}{{\mathfrak M}}
\newcommand{\fN}{{\mathfrak N}}
\newcommand{\Cb}{{\mathbb C}}
\newcommand{\Zb}{{\mathbb Z}}
\newcommand{\Qb}{{\mathbb Q}}
\newcommand{\Rb}{{\mathbb R}}
\newcommand{\Nb}{{\mathbb N}}
\newcommand{\sM}{{\mathscr M}}
\newcommand{\sN}{{\mathscr N}}
\newcommand{\diam}{{\rm diam}}
\newcommand{\tr}{{\rm tr}}
\newcommand{\GL}{{\rm GL}}
\newcommand{\rh}{{\rm h}}
\newcommand{\rM}{{\rm M}}
\newcommand{\mrank}{{\rm mrank}}
\newcommand{\vrank}{{\rm vrank}}
\newcommand{\erank}{{\rm erank}}
\begin{document}

\title{Mean Dimension, Mean Rank, and von Neumann-L\"{u}ck Rank}

\author{Hanfeng Li}
\author{Bingbing Liang}

\address{\hskip-\parindent
H.L., Center of Mathematics, Chongqing University,
Chongqing 401331, China.
\break
Department of Mathematics, SUNY at Buffalo,
Buffalo, NY 14260-2900, U.S.A.}
\email{hfli@math.buffalo.edu}

\address{\hskip-\parindent
B.L., Department of Mathematics, SUNY at Buffalo,
Buffalo NY 14260-2900, U.S.A.}
\email{bliang@buffalo.edu}

\subjclass[2010]{Primary 37B99, 22D25, 55N35.}
\keywords{Algebraic action, mean dimension, mean rank, von Neumann-L\"{u}ck rank, von Neumann dimension, strong Atiyah conjecture}

\date{April 20, 2015}

\begin{abstract}
We introduce an invariant, called mean rank, for any module $\cM$ of the integral group ring of a discrete amenable group $\Gamma$, as an analogue of the rank of an abelian group. It is shown that the mean dimension of the induced $\Gamma$-action on the Pontryagin dual of $\cM$, the mean rank of $\cM$, and the von Neumann-L\"{u}ck rank of $\cM$ all coincide.

As applications, we establish an addition formula for mean dimension of algebraic actions, prove the analogue of the Pontryagin-Schnirelmnn theorem for algebraic actions, and show that for elementary amenable groups with an upper bound on the orders of finite subgroups, algebraic actions with zero mean dimension are
inverse limits of finite entropy actions.
\end{abstract}

\maketitle

\tableofcontents

\section{Introduction} \label{S-introduction}

Mean dimension was introduced by Gromov \cite{Gromov}, and developed systematically by Lindenstrauss and Weiss \cite{LW}, as an invariant for continuous actions of countable amenable groups on compact metrizable spaces. It is a dynamical analogue of the covering dimension, and closely related to the topological entropy. Lindenstrauss and Weiss used it to show that certain minimal homeomorphism does not embed into the shift action with symbol the unit interval \cite{LW}. It has received much attention in the last several years \cite{CK, Gutman, Krieger06, Krieger09, Li13, Lindenstrauss, Tsukamoto08, Tsukamoto09}.

The von Neumann-L\"{u}ck dimension was originally defined for finitely generated projective modules over the group von Neumann algebra $\cN\Gamma$ of a discrete group $\Gamma$, and later extended to arbitrary modules over $\cN\Gamma$ by L\"{u}ck \cite{Luck98} in order to generalize Atiyah's $L^2$-Betti numbers \cite{Atiyah} to arbitrary continuous $\Gamma$-actions. It has profound application to the theory of $L^2$-invariants \cite{Luck}. Via taking tensor product with $\cN\Gamma$, one can define the von Neumann-L\"{u}ck rank for any module over the integral group ring $\Zb\Gamma$ of $\Gamma$.

Despite the fact that mean dimension and von Neumann-L\"{u}ck rank are invariants in totally different areas, one in dynamical systems and the other in  the theory of $L^2$-invariants,
in this paper we establish a connection between them. This paper should be thought of as a sequel to \cite{LT}, in which a similar connection is established between entropy and $L^2$-torsion. The connection is done via studying an arbitrary $\Zb\Gamma$-module $\cM$ for a discrete amenable group $\Gamma$.
On the one hand, we have the induced $\Gamma$-action on the Pontryagin dual $\widehat{\cM}$ by continuous automorphisms, for which the mean dimension $\mdim(\widehat{\cM})$ is defined (see Section~\ref{SS-mdim}). On the other hand, one has the von Neumann-L\"{u}ck rank $\vrank(\cM)$ defined (see Section~\ref{SS-vrank}). The connection is that $\mdim(\widehat{\cM})=\vrank(\cM)$. In order to prove this, we introduce an invariant $\mrank(\cM)$ of $\cM$, called the mean rank, as an analogue of the rank of a discrete abelian group (see Section~\ref{S-mean rank}). Then we can state our main result as follows:

\begin{theorem} \label{T-main}
For any discrete amenable group $\Gamma$ and any (left) $\Zb\Gamma$-module $\cM$, one has
$$ \mdim(\widehat{\cM})=\mrank(\cM)=\vrank(\cM).$$
\end{theorem}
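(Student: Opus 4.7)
The plan is to establish the two equalities $\mdim(\widehat{\cM})=\mrank(\cM)$ and $\mrank(\cM)=\vrank(\cM)$ separately, in each case following a common reduction scheme: first verify that the three invariants are well-behaved with respect to directed unions of submodules and (at least one-sided) behavior on short exact sequences, then reduce to finitely generated modules, and finally to a sufficiently concrete base case, such as cyclic modules $\Zb\Gamma/\Zb\Gamma f$ or finitely presented modules $\Zb\Gamma^n/\Zb\Gamma^m A$. Throughout, the amenability of $\Gamma$ is used through F\o lner sequences $\{F_n\}$ and the Ornstein--Weiss type normalization of $|F_n|^{-1}(\cdot)$.

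For $\mrank(\cM)=\vrank(\cM)$, the mean rank is built from normalized $\Zb$-ranks of the $F_n$-sections of $\cM$, while the von Neumann--L\"{u}ck rank records a normalized trace on $\cN\Gamma\otimes_{\Zb\Gamma}\cM$. The bridge is L\"{u}ck's approximation theorem for amenable groups: for a finitely generated $\Zb\Gamma$-module, $\dim_{\cN\Gamma}$ of the induced $\cN\Gamma$-module can be recovered as $\lim_n |F_n|^{-1}\rank_\Zb$ of the $F_n$-section. My plan is to verify this directly for free modules $\Zb\Gamma^k$, then for finitely presented modules $\Zb\Gamma^n/\Zb\Gamma^m A$ by expressing both sides in terms of the matrix $A$ and invoking the fact that $\dim_{\cN\Gamma}$ of the image of $A$ equals the normalized trace of the spectral projection of $A^*A$ onto its positive part, which approximates the corresponding $\Zb$-rank over F\o lner sets. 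Extension to arbitrary $\cM$ then uses continuity of both invariants under directed unions.

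For $\mdim(\widehat{\cM})=\mrank(\cM)$, the key classical input is that for a finitely generated discrete abelian group $A$, the compact Pontryagin dual $\widehat{A}$ is a finite disjoint union of tori of dimension $\rank_\Zb(A)$; in particular the topological covering dimension of $\widehat{A}$ equals $\rank_\Zb(A)$. Applied to the $F_n$-sections of $\cM$ and their duals, which are exactly the natural $F_n$-compressions of $\widehat{\cM}$, this should identify each of the normalized covering dimensions appearing in the definition of $\mdim(\widehat{\cM})$ with the normalized $\Zb$-rank appearing in the definition of $\mrank(\cM)$. I would work first with cyclic modules $\Zb\Gamma\xi\subset\cM$ generated by a single element (so that $\widehat{\Zb\Gamma\xi}$ is a compact metrizable abelian group with an explicit dual description), then pass to general finitely generated modules by taking suprema, and finally to arbitrary $\cM$ by taking directed unions of finitely generated submodules.

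The main obstacle I anticipate is the reduction to finitely generated/finitely presented modules for $\mdim$: since an \emph{addition formula} for $\mdim$ on short exact sequences of algebraic actions is one of the advertised applications of the theorem, the argument cannot freely use such additivity, on pain of circularity. I expect that only a one-sided estimate $\mdim(\widehat{\cM})\leq\mdim(\widehat{\cM_1})+\mdim(\widehat{\cM_2})$ for $0\to\cM_2\to\cM\to\cM_1\to 0$ is safe, and the matching lower bound would have to be obtained indirectly by first proving $\mdim(\widehat{\cM})\geq\mrank(\cM)$ by a F\o lner-compression/Pontryagin duality argument that does not require any exact-sequence behavior, and separately $\mrank(\cM)=\vrank(\cM)\geq\mdim(\widehat{\cM})$ via a compatible one-sided estimate on the other direction. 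Closing the loop between these three one-sided estimates, without invoking the addition formula, is where I expect the core technical work of the proof to lie.
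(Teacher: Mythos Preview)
Your plan for $\mrank(\cM)=\vrank(\cM)$ is essentially the paper's: establish it first for finitely presented modules via an approximation result (the paper uses Elek's lemma, which is the amenable L\"uck-type statement you invoke), then pass to finitely generated modules using continuity of both invariants under decreasing direct limits of quotients, and finally to arbitrary modules via increasing unions. That part is fine.

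The gap is in $\mdim(\widehat{\cM})=\mrank(\cM)$. The paper does \emph{not} reduce to finitely generated or cyclic modules here; it proves both inequalities directly for an arbitrary $\cM$. For $\mdim\ge\mrank$: given $A\in\cF(\cM)$ and $F\in\cF(\Gamma)$, choose a maximal $\Zb$-linearly independent $B\subseteq F^{-1}A$ and build an explicit embedding of the cube $[0,1/2]^B$ into $\widehat{\cM}$ (via a $\Qb$-linear retraction on $\Qb\otimes_\Zb\cM$); a Lebesgue-type covering argument then yields $\cD(\cU^F)\ge|B|=\rank(\langle F^{-1}A\rangle)$ for a suitably chosen cover $\cU$. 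For $\mdim\le\mrank$: given a cover $\cU$, find a finite $A\subseteq\cM$ so that the evaluation map $\widehat{\cM}\to(\Rb/\Zb)^A$ is $\cU$-compatible; its $F$-orbit version is then $\cU^F$-compatible with image a compact group whose Pontryagin dual is exactly $\langle F^{-1}A\rangle$, so $\cD(\cU^F)\le\dim(\widehat{\langle F^{-1}A\rangle})=\rank(\langle F^{-1}A\rangle)$ by Pontryagin's classical theorem. Neither direction uses any exact-sequence or directed-limit behavior of $\mdim$.

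Your proposed reduction, by contrast, needs $\mdim(\widehat{\cM})=\sup_n\mdim(\widehat{\cM_n})$ when $\cM=\bigcup_n\cM_n$. The inequality $\le$ is easy (pull back covers through the inverse limit), but the other direction asserts $\mdim(\widehat{\cM})\ge\mdim(\widehat{\cM_n})$, i.e.\ that mean dimension decreases under algebraic factors---which is precisely the corollary the paper \emph{derives from} the main theorem, so using it here is circular, as you suspected. Your suggested workaround (``prove $\mdim\ge\mrank$ directly by a F\o lner/duality argument'') is in fact the right move, but the mechanism is not a vague identification of ``$F_n$-compressions of $\widehat{\cM}$'' with duals of $F_n$-sections: the definition of $\mdim$ involves $\cD(\cU^F)$ for open covers, not covering dimensions of restricted subspaces, and there is no automatic identification. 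What actually works is the cube-embedding for one inequality and the $\cU$-compatible evaluation map for the other, both carried out for general $\cM$ without any preliminary reduction.
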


Theorem~\ref{T-main} has applications to both mean dimension and von Neumann-L\"{u}ck rank. Unlike entropy, in general mean dimension does not necessarily decrease when passing to factors. Using the addition formula either for mean rank or for von Neumann-L\"{u}ck rank, in Corollary~\ref{C-addition mdim} we establish addition formula for mean dimension of {\it algebraic actions}, i.e. actions on compact metrizable abelian groups by continuous automorphisms. In particular, mean dimension does decrease when passing to algebraic factors.

Another application concerns the analogue of the Pontryagin-Schnirelmann theorem for algebraic actions. The Pontryagin-Schnirelmann theorem says that for any compact metrizable space $X$, its covering dimension is the minimal value of Minkowski dimension of $(X, \rho)$ for $\rho$ ranging over compatible metrics on $X$ \cite{PS}. While mean dimension is the dynamical analogue of the covering dimension, Lindenstrauss and Weiss also introduced a dynamical analogue of the Minkowski dimension, called metric mean dimension \cite{LW}. Thus it is natural to ask for a dynamical analogue of the Pontryagin-Schnirelmann theorem. Indeed, Lindentrauss and Weiss showed that mean dimension is always bounded above by metric mean dimension \cite{LW}. Lindenstrauss managed to obtain the full dynamical analogue of the Pontryagin-Schnirelmann theorem, under the condition that $\Gamma=\Zb$ and the action $\Zb\curvearrowright X$ has an infinite minimal factor \cite{Lindenstrauss}. In Theorem~\ref{T-metric mdim} we establish the analogue of the Pontryagin-Schnirelmann theorem for all algebraic actions of countable amenable groups.

The deepest applications of Theorem~\ref{T-main} to mean dimension use the strong Atiyah conjecture \cite{Luck}, which describes possible values of the von Neumann dimension of kernels of matrices over the complex group algebra of a discrete group. Though in general the strong Atiyah conjecture fails \cite{Austin, DS, Grabowski1, Grabowski2, GLSZ, GZ, LWa, PSZ}, it has been verified for large classes of groups \cite{Linnell, Luck}. Using the known cases of the strong Atiyah conjecture, in Corollary~\ref{C-mdim rational} we show that for any elementary amenable group with an upper bound on the orders of the finite subgroups, the range of  mean dimension of its algebraic actions is quite restricted. This is parallel to the result of Lind, Schmidt and Ward  that the range of entropy of algebraic actions of $\Zb^d$ depends on the (still unknown) answer to Lehmer's problem \cite[Theorem 4.6]{LSW}.

The last application we give to mean dimension concerns the structure of algebraic actions with zero mean dimension. Lindestrauss showed that inverse limits of actions with finite topological entropy have zero mean dimension \cite{Lindenstrauss}. He raised implicitly the question whether the converse holds, and showed that this is the case for $\Zb$-actions with infinite minimal factors \cite{Lindenstrauss}. Using the range of mean dimension of algebraic actions, in Corollary~\ref{C-zero} we show that the converse holds for algebraic actions of any elementary amenable group with an upper bound on the orders of the finite subgroups.

We remark that recently mean dimension has been extended to actions of sofic groups \cite{Li13}. It will be interesting to find out whether the equality between
$\mdim(\widehat{\cM})$ and $\vrank(\cM)$ in Theorem~\ref{T-main} holds in sofic case.

This paper is organized as follows. We recall some basic definitions and results in Section~\ref{S-preliminary}. We define mean rank and prove addition formula for it in Section~\ref{S-mean rank}. The equality between $\mdim(\widehat{\cM})$ and $\mrank(\cM)$ is proved in Section~\ref{S-mdim vs mrank}, while the equality between $\mrank(\cM)$ and $\vrank(\cM)$ is proved in Section~\ref{S-mrank vs vrank}.
The rest of the paper is concerned with various applications of Theorem~\ref{T-main}. The addition formula for mean dimension and some vanishing result for von Neumann-L\"{u}ck rank are proved in Section~\ref{S-applications}. The analogue of the Pontryagin-Schnirelmann theorem for algebraic actions is proved in Section~\ref{S-metric mdim}. We discuss the range of mean dimension of algebraic actions in Section~\ref{S-range} and the structure of algebraic actions with zero mean dimension in Section~\ref{S-zero}.

After the first draft of this paper is circulated, G\'{a}bor Elek pointed out to us that the mean rank of a $\Zb\Gamma$-module is closely related to the rank he introduced for finitely generated $\Qb\Gamma$-modules in \cite{Elek1}. In  Appendix~\ref{S-Elek} we discuss the precise relation between these two ranks.

\noindent{\it Acknowledgements.}
H. Li was partially supported by NSF grants DMS-1001625 and DMS-1266237. He thanks Masaki Tsukamoto for bringing his attention to Question~\ref{Q-zero}.
We are grateful to Nhan-Phu Chung, Yonatan Gutman, David Kerr and Andreas Thom for helpful comments, and to G\'{a}bor Elek for alerting us to  \cite{Elek1}.

\section{Preliminaries} \label{S-preliminary}

For any set $X$ we denote by $\cF(X)$ the set of all nonempty finite subsets of $X$. When a group $\Gamma$ acts on a set $X$, for any $F\subseteq \Gamma$ and $A\subseteq X$ we denote $\bigcup_{s\in F}sA$ by $FA$.


\subsection{Group rings} \label{SS-group rings}

Let $\Gamma$ be a discrete group with identity element $e$. The {\it integral group ring} of $\Gamma$, denoted by $\Zb\Gamma$, consists of all finitely supported functions $f: \Gamma\rightarrow \Zb$. We shall write $f$ as $\sum_{s\in \Gamma}f_s s$. The ring structure of $\Zb\Gamma$ is defined by
$$ \sum_{s\in \Gamma}f_ss+\sum_{s\in \Gamma} g_ss=\sum_{s\in \Gamma}(f_s+g_s)s, \, \, \, \, \quad (\sum_{s\in \Gamma}f_ss)\cdot (\sum_{s\in \Gamma} g_ss)=\sum_{s\in \Gamma}(\sum_{t\in \Gamma}f_tg_{t^{-1}s})s.$$
Similarly, one defines $\Cb\Gamma$.

Denote by $\ell^2(\Gamma)$ the Hilbert space of all functions $x: \Gamma\rightarrow \Cb$ satisfying $\sum_{s\in \Gamma}|x_s|^2<+\infty$.
The {\it left regular representation} and {\it right regular representation} of $\Gamma$ on $\ell^2(\Gamma)$ are defined by
$$ (l_sx)_t=x_{s^{-1}t} \mbox{ and } (r_sx)_t=x_{ts}$$
respectively, and commute with each other. The {\it group von Neumann algebra} $\cN\Gamma$ of $\Gamma$ is defined as the $*$-algebra of all bounded linear operators on $\ell^2(\Gamma)$  commuting with the image of the right regular representation. See \cite[Section V.7]{Takesaki} for detail.
Via the left regular representation, we may identify $\Zb\Gamma$ with a subring of $\cN\Gamma$.

For $m, n\in \Nb$, we shall think of elements of $M_{n, m}(\cN\Gamma)$ as bounded linear operators from $(\ell^2(\Gamma))^{m\times 1}$ to  $(\ell^2(\Gamma))^{n\times 1}$. There is a canonical trace $\tr_{\cN\Gamma}$ on $M_n(\cN\Gamma)$ defined by
$$\tr_{\cN\Gamma}f=\sum_{j=1}^n\left<f_{j, j}e, e\right>$$
for $f=(f_{j, k})_{1\le j, k\le n}\in M_n(\cN\Gamma)$, where via the natural embedding $\Gamma\hookrightarrow \Cb\Gamma\hookrightarrow \ell^2(\Gamma)$ we identify $\Gamma$ with the canonical orthonormal basis of $\ell^2(\Gamma)$. For any $f\in M_{m,n}(\cN\Gamma)$ and $g\in M_{n, m}(\cN\Gamma)$, one has the tracial property
\begin{align} \label{E-trace}
\tr_{\cN\Gamma}(fg)=\tr_{\cN\Gamma}(gf)
\end{align}

\subsection{von Neumann-L\"{u}ck rank} \label{SS-vrank}

For a finitely generated projective (left) $\cN\Gamma$-module $\fM$, take $P\in M_n(\cN\Gamma)$ for some $n\in \Nb$ such that $P^2=P$ and
$(\cN\Gamma)^{1\times n}P$ is isomorphic to $\fM$ as $\cN\Gamma$-modules. The {\it von Neumann dimension} $\dim_{\cN\Gamma}' \fM$ of $\fM$
is defined as
$$\dim_{\cN\Gamma}'\fM=\tr_{\cN\Gamma}P\in [0, n],$$
and does not depend on the choice of $P$. For an arbitrary (left) $\cN\Gamma$-module $\fM$, its {\it von Neumann-L\"{u}ck dimension} $\dim_{\cN\Gamma}\fM$ \cite[Definition 6.6]{Luck} is defined as
$$\dim_{\cN\Gamma}\fM=\sup_{\fN}\dim_{\cN\Gamma}'\fN\in [0, +\infty],$$
where $\fN$ ranges over all finitely generated projective submodules of $\fM$.

We collect a few fundamental properties of the von Neumann-L\"{u}ck dimension here \cite[Theorem 6.7]{Luck}:

\begin{theorem} \label{T-dim basic}
The following hold.
\begin{enumerate}
\item For any short exact sequence
$$0\rightarrow \fM_1\rightarrow \fM_2\rightarrow \fM_3\rightarrow 0$$
of $\cN\Gamma$-modules, one has
$$\dim_{\cN\Gamma}\fM_2=\dim_{\cN\Gamma}\fM_1+\dim_{\cN\Gamma}\fM_3.$$
\item For any $\cN\Gamma$-module $\fM$ and any increasing net $\{\fM_n\}_{n\in J}$ of $\cN\Gamma$-submodules of $\fM$ with union $\fM$, one has
$$ \dim_{\cN\Gamma}\fM=\sup_{n\in J}\dim_{\cN\Gamma}\fM_n.$$
\item $\dim_{\cN\Gamma}$ extends $\dim_{\cN\Gamma}'$, i.e. for any $n\in \Nb$ and any  idempotent $P\in M_n(\cN\Gamma)$, one
has $\dim_{\cN\Gamma}((\cN\Gamma)^{1\times n}P)=\tr_{\cN\Gamma}P$. In particular, $\dim_{\cN\Gamma}\cN\Gamma=1$.
\end{enumerate}
\end{theorem}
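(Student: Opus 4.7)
The plan is to prove parts (3), (2), (1) in that order. The common tool throughout is that $\tr_{\cN\Gamma}$ is a faithful normal tracial state, so $\cN\Gamma$ is a finite von Neumann algebra; this forces finitely generated projective $\cN\Gamma$-submodules of free modules to embed as orthogonal direct summands, which is the structural input that makes the whole theory go.

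For (3) I would first check well-definedness of $\dim_{\cN\Gamma}'$. After padding by zeros we may assume $P,Q\in M_n(\cN\Gamma)$ with $(\cN\Gamma)^{1\times n}P\cong (\cN\Gamma)^{1\times n}Q$; such an isomorphism is implemented by right multiplication by some $a\in PM_n(\cN\Gamma)Q$ with inverse $b\in QM_n(\cN\Gamma)P$, so $ab=P$ and $ba=Q$, and the tracial identity \eqref{E-trace} yields $\tr_{\cN\Gamma}P=\tr_{\cN\Gamma}Q$. That $\dim_{\cN\Gamma}$ extends $\dim_{\cN\Gamma}'$ on finitely generated projectives is then clear: the inequality $\ge$ comes from taking $\fM$ itself as a submodule, and the inequality $\le$ comes from the direct summand property together with positivity of the trace on projections. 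Specializing to $n=1$, $P=1$ gives $\dim_{\cN\Gamma}\cN\Gamma=1$.

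Part (2) is the continuity assertion. Monotonicity gives $\sup_n\dim_{\cN\Gamma}\fM_n\le \dim_{\cN\Gamma}\fM$. For the reverse, any finitely generated projective $\fN\subseteq \bigcup_n\fM_n$ has generators lying in some $\fM_{n_0}$, but the obstacle is that $\fN$ need not remain a direct summand of $\fM_{n_0}$. I would extract a projection realizing $\dim_{\cN\Gamma}'\fN$ via the polar decomposition of the generating map $(\cN\Gamma)^{1\times k}\to \fN$ after tensoring up to $\ell^2(\Gamma)$, then approximate this projection by spectral projections of operators supported inside $\fM_n$ for large $n$. Normality of $\tr_{\cN\Gamma}$ promotes the approximation to the inequality $\dim_{\cN\Gamma}'\fN\le \sup_n\dim_{\cN\Gamma}\fM_n$.

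The additivity (1) is the deepest statement and I expect it to be the main obstacle. For short exact sequences of finitely generated projectives the sequence splits and additivity reduces to additivity of the trace on block diagonal idempotents in $M_n(\cN\Gamma)$. For the general case I would filter $\fM_1,\fM_2,\fM_3$ by their finitely generated submodules in a compatible way, reduce via (2) to the finitely generated case, and then treat the finitely generated case by partial free resolutions together with a further application of (2) to cope with kernels that are not themselves finitely generated. The real technical heart is precisely this handling of non-finitely-generated kernels: one must show that $\dim_{\cN\Gamma}$ continues to behave as a dimension function on them, which is tantamount to saying that $\cN\Gamma$ is dimension-flat in the sense captured by (2), and this is the place where the normality of the trace does the decisive work.
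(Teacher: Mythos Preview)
The paper does not prove Theorem~\ref{T-dim basic}; it is quoted verbatim from L\"uck's book as \cite[Theorem 6.7]{Luck} and used as a black box throughout. So there is no in-paper proof to compare your proposal against.

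That said, your sketch underestimates what actually drives L\"uck's argument. The decisive structural fact is not merely that $\tr_{\cN\Gamma}$ is faithful and normal, but that $\cN\Gamma$ is \emph{semihereditary}: every finitely generated submodule of a projective $\cN\Gamma$-module is itself projective and a direct summand. This is what yields, for every finitely generated $\cN\Gamma$-module $\fM$, a canonical splitting $\fM=\mathbf{P}\fM\oplus \mathbf{T}\fM$ into a projective part and a torsion part with $\dim_{\cN\Gamma}\mathbf{T}\fM=0$. Additivity (1) is then proved first for finitely generated modules via this splitting (not via ``partial free resolutions'' as you suggest), and the passage to arbitrary modules requires a cofinality argument showing that the dimension of a quotient $\fM_2/\fM_1$ can be computed from finitely generated submodules of $\fM_2$ alone. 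Your plan to ``filter $\fM_1,\fM_2,\fM_3$ by their finitely generated submodules in a compatible way'' hides exactly this difficulty: a finitely generated submodule of $\fM_3$ need not lift to one of $\fM_2$ whose intersection with $\fM_1$ is finitely generated, so compatibility is not automatic. Similarly, in (2) your spectral-approximation idea is too vague to constitute an argument; the actual proof again goes through the $\mathbf{P}\fM\oplus\mathbf{T}\fM$ decomposition.
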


For any (left) $\Zb\Gamma$-module $\cM$, its {\it von Neumann-L\"{u}ck rank} $\vrank(\cM)$ is defined by
$$ \vrank(\cM):=\dim_{\cN\Gamma}(\cN\Gamma\otimes_{\Zb\Gamma}\cM).$$

\subsection{Amenable group} \label{SS-amenable group}

Let $\Gamma$ be a discrete group. For $K\in \cF(\Gamma)$ and $\delta>0$, denote by $\cB(K, \delta)$ the set of all $F\in \cF(\Gamma)$ satisfying $|KF\setminus F|<\delta |F|$. The group $\Gamma$ is called {\it amenable} if $\cB(K, \delta)$ is nonempty for every $(K, \delta)$ \cite[Section 4.9]{CC}.

The collection of pairs $(K, \delta)$ forms a net $\Lambda$ where $(K', \delta')\succ(K, \delta)$ means $K'\supseteq K$ and $\delta'<\delta$. For a
$\Rb$-valued function $\varphi$ defined on $\cF(\Gamma)$, we say that $\varphi(F)$ converges to $c\in \Rb$ when $F\in \cF(\Gamma)$ becomes more and more left
invariant, denoted by $\lim_F\varphi(F)=c$, if for any $\varepsilon>0$ there is some $(K, \delta)\in \Lambda$ such that  $|\varphi(F)-c|<\varepsilon$ for all $F\in \cB(K, \delta)$. In general, we define
$$ \varlimsup_F\varphi(F):=\lim_{(K, \delta)\in \Lambda}\sup_{F\in \cB(K, \delta)}\varphi(F). $$

\subsection{Mean dimension} \label{SS-mdim}

Let $X$ be a compact Hausdorff space. For two finite open covers $\cU$ and $\cV$ of $X$, we say that $\cV$ {\it refines} $\cU$ and write $\cV\succ \cU$, if
every item of  $\cV$ is contained in some item of $\cU$.
We set
$$\ord(\cU):=\max_{x\in X}\sum_{U\in \cU}1_U(x)-1,$$
where $1_U$ denotes the characteristic function of $U$, and
$$\cD(\cU):=\min_{\cV\succ \cU}\ord(\cV)$$
for $\cV$ ranging over all finite open covers of $X$ refining $\cU$. The {\it covering dimension} of $X$, denoted by $\dim(X)$, is defined as
the supremum of $\cD(\cU)$ for $\cU$ ranging over finite open covers of $X$ \cite[Section V.8]{HW}.

Consider a continuous action of a discrete amenable group $\Gamma$ on $X$. For any finite open covers $\cU$ and $\cV$ of $X$, the joining $\cU\vee \cV$ is the finite open cover of $X$ consisting of $U\cap V$ for all $U\in \cU$ and $V\in \cV$. For any $F\in \cF(\Gamma)$, set $\cU^F=\bigvee_{s\in F}s^{-1}\cU$. The function
$\cF(\Gamma)\rightarrow \Rb$ sending $F$ to $\cD(\cU^F)$ satisfies the conditions of the Ornstein-Weiss lemma \cite{OW} \cite[Theorem 6.1]{LW}, and hence
the limit $\lim_F\frac{\cD(\cU^F)}{|F|}$ exists, which we denote by $\mdim(\cU)$. The {\it mean dimension} of the action $\Gamma \curvearrowright X$, denoted by $\mdim(X)$, is defined as the supremum of $\mdim(\cU)$ for $\cU$ ranging over all finite open covers of $X$ \cite[Definition 2.6]{LW}.

\section{Mean rank and addition formula} \label{S-mean rank}

Throughout the rest of this paper $\Gamma$ will be a discrete amenable group, unless specified otherwise.

In this section we define mean rank for $\Zb\Gamma$-modules and prove the addition formula for mean rank.

Recall that the {\it rank} of a discrete abelian group $\sM$ is defined as $\dim_\Qb(\Qb\otimes_\Zb\sM)$, which we shall denote by $\rank(\sM)$. We say that a
subset $A$ of $\sM$ is {\it linearly independent} if every finitely supported function $\lambda: A\rightarrow \Zb$ satisfying $\sum_{a\in A}\lambda_aa=0$ in $\sM$ must be $0$. It is clear that the cardinality of any maximal linearly independent subset of $\sM$ is equal to $\rank(\sM)$.

We need the following elementary property about rank a few times:

\begin{lemma} \label{L-rank}
For any short exact sequence
$$0\rightarrow \sM_1\rightarrow \sM_2\rightarrow \sM_3\rightarrow 0$$
of abelian groups, one has $\rank(\sM_2)=\rank(\sM_1)+\rank(\sM_3)$.
\end{lemma}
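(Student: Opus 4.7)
The plan is to reduce the statement to the additivity of dimension on short exact sequences of vector spaces by tensoring with $\Qb$. Since $\Qb$ is the localization of $\Zb$ at the nonzero integers, it is a flat $\Zb$-module, so the functor $\Qb\otimes_\Zb(-)$ is exact. Applying it to the given short exact sequence yields a short exact sequence
$$0\rightarrow \Qb\otimes_\Zb\sM_1\rightarrow \Qb\otimes_\Zb\sM_2\rightarrow \Qb\otimes_\Zb\sM_3\rightarrow 0$$
of $\Qb$-vector spaces. Since $\Qb$-dimension is additive on short exact sequences of vector spaces (any basis of the image extends, via lifts, together with a basis of the kernel to a basis of the middle term), one obtains
$$\dim_\Qb(\Qb\otimes_\Zb\sM_2)=\dim_\Qb(\Qb\otimes_\Zb\sM_1)+\dim_\Qb(\Qb\otimes_\Zb\sM_3),$$
which by definition of rank is the desired formula.

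If one prefers to avoid invoking flatness explicitly, an equivalent approach is to work directly with linearly independent subsets: pick a maximal linearly independent set $A_1\subseteq \sM_1$ and a maximal linearly independent set $A_3$ in $\sM_3$, lift $A_3$ to a subset $\widetilde{A}_3\subseteq \sM_2$ via the surjection $\sM_2\to\sM_3$, and verify that $\iota(A_1)\sqcup \widetilde{A}_3$ is a maximal linearly independent subset of $\sM_2$, where $\iota\colon\sM_1\hookrightarrow\sM_2$ is the inclusion. Linear independence uses that any $\Zb$-relation must push down to a relation among the $A_3$-lifts and hence be trivial there, forcing the remaining part to be a relation in $\iota(A_1)$; maximality uses that for any $x\in\sM_2$ some nonzero integer multiple of its image in $\sM_3$ lies in $\spn_\Zb A_3$, allowing one to subtract off a $\Zb$-combination of $\widetilde{A}_3$ to land in $\sM_1$, where a further integer multiple lies in $\spn_\Zb A_1$.

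No real obstacle is anticipated here; the only minor care needed is to ensure that the lift $\widetilde{A}_3$ remains linearly independent and that the two pieces are jointly independent, both of which follow directly from the exactness. I would present the flatness argument since it is the shortest and most transparent.
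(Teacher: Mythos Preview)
Your primary argument is correct and essentially identical to the paper's own proof: the paper invokes exactness of the functor $\Qb\otimes_\Zb\cdot$ to obtain the short exact sequence of $\Qb$-vector spaces and then uses additivity of $\dim_\Qb$. Your alternative via maximal linearly independent subsets is also fine but unnecessary here.
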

\begin{proof} Since the functor $\Qb \otimes_\Zb \cdot$ is exact \cite[Proposition XVI.3.2]{Lang}, the sequence
$$ 0\rightarrow \Qb\otimes_\Zb\sM_1\rightarrow \Qb\otimes_\Zb\sM_2\rightarrow \Qb\otimes_\Zb\sM_3\rightarrow 0$$
is exact. Thus
$$ \rank(\sM_2)=\dim_\Qb(\Qb\otimes_\Zb\sM_2)=\dim_\Qb(\Qb\otimes_\Zb\sM_1)+\dim_\Qb(\Qb\otimes_\Zb\sM_3)=\rank(\sM_1)+\rank(\sM_3).$$
\end{proof}

For any discrete abelian group $\sM$ and $A\subseteq \sM$, we denote by $\left<A\right>$ the subgroup of $\sM$ generated by $A$.

Let $\cM$ be a (left) $\Zb\Gamma$-module.

\begin{lemma} \label{L-mrank def}
For any $A\in \cF(\cM)$, one has
$$\lim_F\frac{\rank(\left<F^{-1}A\right>)}{|F|}=\inf_{F\in \cF(\Gamma)}\frac{\rank(\left<F^{-1}A\right>)}{|F|}.$$
\end{lemma}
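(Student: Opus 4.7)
The plan is to apply the Ornstein--Weiss subadditive limit lemma to the function $\varphi: \cF(\Gamma) \to \Rb_{\geq 0}$ defined by $\varphi(F) := \rank(\left<F^{-1}A\right>)$, and then identify the limit with the infimum via a standard quasi-tiling argument. First I would verify three properties of $\varphi$. \emph{Monotonicity}: if $E \subseteq F$, then $\left<E^{-1}A\right>$ is a subgroup of $\left<F^{-1}A\right>$, so Lemma~\ref{L-rank} applied to the resulting quotient exact sequence yields $\varphi(E) \leq \varphi(F)$. \emph{Sub-additivity}: $\left<(E \cup F)^{-1}A\right> = \left<E^{-1}A\right> + \left<F^{-1}A\right>$ is a homomorphic image of $\left<E^{-1}A\right> \oplus \left<F^{-1}A\right>$, so Lemma~\ref{L-rank} gives $\varphi(E \cup F) \leq \varphi(E) + \varphi(F)$. \emph{Right-$\Gamma$-invariance}: for any $s \in \Gamma$, one has $(Fs)^{-1}A = s^{-1}F^{-1}A$, and multiplication by $s^{-1}$ is a $\Zb$-linear automorphism of $\cM$, so $\left<F^{-1}A\right>$ is carried isomorphically onto $\left<(Fs)^{-1}A\right>$, whence $\varphi(Fs) = \varphi(F)$.

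With these in hand, the Ornstein--Weiss lemma \cite{OW}, \cite[Theorem~6.1]{LW} guarantees that $\varphi(F)/|F|$ converges as $F$ becomes more and more left-invariant. To identify the limit with $\inf_{F} \varphi(F)/|F|$, I would argue as follows. Given $F_0 \in \cF(\Gamma)$ and $\varepsilon > 0$, for $F$ sufficiently left-F{\o}lner relative to $F_0$ one can find pairwise disjoint right translates $F_0 s_1, \ldots, F_0 s_k \subseteq F$ exhausting all of $F$ except for a remainder $R$ with $|R| < \varepsilon|F|$. Sub-additivity together with right-invariance gives
\begin{equation*}
\varphi(F) \;\leq\; \sum_{i=1}^k \varphi(F_0 s_i) + \varphi(R) \;=\; k\,\varphi(F_0) + \varphi(R),
\end{equation*}
and combining sub-additivity with the trivial bound $\varphi(\{s\}) \leq |A|$ yields $\varphi(R) \leq |R|\cdot|A|$. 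Since $k|F_0| \leq |F|$, this gives $\varphi(F)/|F| \leq \varphi(F_0)/|F_0| + \varepsilon|A|$, which upon letting $\varepsilon \to 0$ and taking infimum over $F_0$ proves $\lim \leq \inf$; the reverse inequality is immediate.

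The only step that is not pure formal manipulation is the quasi-tiling by right translates of $F_0$, which is a standard Ornstein--Weiss construction for amenable groups. Apart from that, the argument reduces to bookkeeping with ranks via Lemma~\ref{L-rank} and the $\Gamma$-action on $\cM$, with the inversion $F \mapsto F^{-1}$ in the definition of $\varphi$ being precisely what makes $\varphi$ right-invariant and thus compatible with the left-F{\o}lner convention adopted in the paper.
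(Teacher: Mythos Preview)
Your quasi-tiling step contains a genuine gap. The claim that a sufficiently invariant $F$ can be covered, up to remainder $<\varepsilon|F|$, by \emph{pairwise disjoint} right translates of a single fixed tile $F_0$ is false for general $F_0$. Take $\Gamma=\Zb$ and $F_0=\{0,1,3\}$: since $F_0-F_0=\{-3,\dots,3\}$, any two disjoint translates must have centers at distance $\ge 4$, so every packing has density at most $3/4$ and the remainder is at least $|F|/4$ no matter how F{\o}lner $F$ is. The Ornstein--Weiss machinery yields only $\varepsilon$-disjoint quasi-tilings, and in its general form requires a hierarchy of tile shapes rather than a single prescribed $F_0$; neither variant gives what you need to bound $\varphi(F)/|F|$ by $\varphi(F_0)/|F_0|$ for an \emph{arbitrary} $F_0$. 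Weak subadditivity and right-invariance alone do guarantee the limit exists (this is \cite[Theorem~6.1]{LW}), but they are not known to force $\lim=\inf$.

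The paper closes this gap by proving the stronger \emph{submodularity} inequality
\[
\varphi(F_1\cup F_2)+\varphi(F_1\cap F_2)\le \varphi(F_1)+\varphi(F_2)
\]
(Lemma~\ref{L-property of rank}(3)), and then invoking a general fact (\cite{JMO}, \cite[Lemma~3.3]{LT}) that submodularity together with right-invariance and $\varphi(\emptyset)=0$ forces $\lim_F\varphi(F)/|F|=\inf_F\varphi(F)/|F|$. Submodularity is what makes a Shearer-type covering argument work in place of your disjoint tiling: covering $F$ by the sets $F_0c\cap F$ (each point covered exactly $|F_0|$ times) then yields $|F_0|\,\varphi(F)\le\sum_c\varphi(F_0c\cap F)\le |F_0^{-1}F|\,\varphi(F_0)$. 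The proof of submodularity itself is a short rank computation via Lemma~\ref{L-rank}: with $\sM_j=\langle F_j^{-1}A\rangle$ one has $\rank(\sM_1+\sM_2)-\rank(\sM_1)=\rank(\sM_2)-\rank(\sM_1\cap\sM_2)\le \rank(\sM_2)-\rank(\langle(F_1\cap F_2)^{-1}A\rangle)$, the last step because $\langle(F_1\cap F_2)^{-1}A\rangle\subseteq\sM_1\cap\sM_2$. You should replace your weak subadditivity by this.
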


Lemma~\ref{L-mrank def} follows from Lemma~\ref{L-property of rank} and the fact that for any function $\varphi$ satisfying the three conditions in Lemma~\ref{L-property of rank} one has $\lim_F\frac{\varphi(F)}{|F|}=\inf_{F\in \cF(\Gamma)}\frac{\varphi(F)}{|F|}$ \cite[Definitions 2.2.10 and 3.1.5, Remark 3.1.7, and Proposition 3.1.9]{JMO} \cite[Lemma 3.3]{LT}.

\begin{lemma} \label{L-property of rank}
Let $A\in \cF(\cM)$. Define $\varphi: \cF(\Gamma)\cup\{\emptyset\}\rightarrow \Zb$ by $\varphi(F)=\rank(\left<F^{-1}A\right>)$. Then the following hold:
\begin{enumerate}
\item $\varphi(\emptyset)=0$;

\item $\varphi(Fs)=\varphi(F)$ for all $F\in \cF(\Gamma)$ and $s\in \Gamma$;

\item $\varphi(F_1\cup F_2)+\varphi(F_1\cap F_2)\le \varphi(F_1)+\varphi(F_2)$ for all $F_1, F_2\in \cF(\Gamma)$.
\end{enumerate}
\end{lemma}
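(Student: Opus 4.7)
The plan is to verify each of the three conditions directly, leaning on Lemma~\ref{L-rank} (additivity of rank on short exact sequences) for the submodular inequality.

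For (1), when $F = \emptyset$ the set $F^{-1}A$ is empty, so $\left<F^{-1}A\right> = \{0\}$ and its rank is $0$. For (2), I would observe that $(Fs)^{-1}A = s^{-1}F^{-1}A$, and since $\cM$ is a $\Zb\Gamma$-module the action of $s^{-1}$ is an automorphism of the underlying abelian group; it therefore maps $\left<F^{-1}A\right>$ isomorphically onto $\left<(Fs)^{-1}A\right>$, so the ranks agree.

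The real content is (3). Set $B_i = \left<F_i^{-1}A\right>$ for $i=1,2$. The key observations are: (a) $(F_1 \cup F_2)^{-1}A = F_1^{-1}A \cup F_2^{-1}A$, hence $\left<(F_1\cup F_2)^{-1}A\right> = B_1 + B_2$; and (b) $(F_1\cap F_2)^{-1}A = (F_1^{-1}\cap F_2^{-1})A \subseteq F_1^{-1}A \cap F_2^{-1}A$, hence $\left<(F_1\cap F_2)^{-1}A\right> \subseteq B_1\cap B_2$. Applying Lemma~\ref{L-rank} to the short exact sequence
\[
0 \to B_1\cap B_2 \to B_1\oplus B_2 \to B_1+B_2 \to 0
\]
(where the first map is $x\mapsto (x,-x)$ and the second is $(x,y)\mapsto x+y$) yields $\rank(B_1)+\rank(B_2) = \rank(B_1\cap B_2) + \rank(B_1+B_2)$. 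Combining with the inclusion in (b), which gives $\rank(\left<(F_1\cap F_2)^{-1}A\right>) \le \rank(B_1\cap B_2)$, produces the desired submodular inequality.

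The mildly subtle point, and the only place one could slip up, is (b): the containment can be strict because elements of $B_1\cap B_2$ need only be $\Zb$-linear combinations of elements of $F_1^{-1}A$ that happen to coincide with such combinations of elements of $F_2^{-1}A$, and these need not arise from indices in $F_1\cap F_2$. Fortunately we only need the one-sided inclusion, so no finer analysis is required. Everything else is formal manipulation.
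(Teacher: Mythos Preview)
Your proof is correct and follows essentially the same approach as the paper: both identify that $\left<(F_1\cup F_2)^{-1}A\right>=B_1+B_2$ and $\left<(F_1\cap F_2)^{-1}A\right>\subseteq B_1\cap B_2$, then use Lemma~\ref{L-rank} to obtain the rank inclusion-exclusion identity $\rank(B_1)+\rank(B_2)=\rank(B_1\cap B_2)+\rank(B_1+B_2)$. The only cosmetic difference is that the paper derives this identity via the second isomorphism theorem $(B_1+B_2)/B_1\cong B_2/(B_1\cap B_2)$ rather than your direct-sum exact sequence, but this is the same argument in different clothing.
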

\begin{proof} (1) and (2) are trivial. Let $F_1, F_2\in \cF(\Gamma)$.
Set $\sM_j=\left<F_j^{-1}A\right>$ for $j=1, 2$, $\sM=\left<(F_1\cup F_2)^{-1}A\right>$, and
$\sN=\left<(F_1\cap F_2)^{-1}A\right>$. Then $\sM_1+\sM_2=\sM$, and $\sN\subseteq \sM_1\cap \sM_2$.
By Lemma~\ref{L-rank} we have
\begin{align*}
\varphi(F_1\cup F_2)-\varphi(F_1)&=\rank(\sM)-\rank(\sM_1)\\
&=\rank(\sM/\sM_1)\\
&=\rank(\sM_2/\sM_1\cap \sM_2)\\
&=\rank(\sM_2)-\rank(\sM_1\cap \sM_2)\\
&\le \rank(\sM_2)-\rank(\sN)\\
&=\varphi(F_2)-\varphi(F_1\cap F_2).
\end{align*}
\end{proof}

\begin{remark} \label{R-mrank}
One can also prove the existence of the limit $\lim_F\frac{\rank(\left<F^{-1}A\right>)}{|F|}$ for every $A\in \cF(\cM)$ using the Ornstein-Weiss lemma \cite{OW} \cite[Theorem 6.1]{LW}. The fact that this limit is equal to $\inf_{F\in \cF(\Gamma)}\frac{\rank(\left<F^{-1}A\right>)}{|F|}$ will be crucial in the proof of Lemma~\ref{L-decreasing limit mrank} below which discusses the behavior of mean rank under taking decreasing direct limit.
\end{remark}

\begin{definition} \label{D-mean rank}
We define the {\it mean rank} of a (left) $\Zb\Gamma$-module $\cM$ as
$$\mrank(\cM):=\sup_{A\in \cF(\cM)}\lim_F \frac{\rank(\left<F^{-1}A\right>)}{|F|}=\sup_{A\in \cF(\cM)}\inf_{F\in \cF(\Gamma)} \frac{\rank(\left<F^{-1}A\right>)}{|F|}.$$
\end{definition}

The main result of this section is the following addition formula for mean rank under taking extensions of $\Zb\Gamma$-modules.

\begin{theorem} \label{T-addition mrank}
For any short exact sequence
$$0\rightarrow \cM_1\rightarrow \cM_2\rightarrow \cM_3\rightarrow 0$$
of $\Zb\Gamma$-modules, we have
$$\mrank(\cM_2)=\mrank(\cM_1)+\mrank(\cM_3).$$
\end{theorem}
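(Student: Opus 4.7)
The plan is to prove the two inequalities separately, with the reverse inequality requiring the bulk of the work.

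For $\mrank(\cM_2) \geq \mrank(\cM_1) + \mrank(\cM_3)$, the construction is direct: identify $\cM_1$ with its image in $\cM_2$ and let $\pi\colon \cM_2 \to \cM_3$ be the quotient. Given $B \in \cF(\cM_1)$ and $C \in \cF(\cM_3)$, pick a set-theoretic lift $\tilde C \subset \cM_2$ of $C$ with $|\tilde C|=|C|$, and set $A = B \cup \tilde C \in \cF(\cM_2)$. Applying Lemma~\ref{L-rank} to
$$0 \to \langle F^{-1}A\rangle \cap \cM_1 \to \langle F^{-1}A\rangle \to \langle F^{-1}\pi(A)\rangle \to 0,$$
together with the inclusions $\langle F^{-1}B\rangle \subseteq \langle F^{-1}A\rangle \cap \cM_1$ and $\langle F^{-1}C\rangle \subseteq \langle F^{-1}\pi(A)\rangle$, yields $\rank(\langle F^{-1}A\rangle) \geq \rank(\langle F^{-1}B\rangle) + \rank(\langle F^{-1}C\rangle)$. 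Dividing by $|F|$, passing to limits via Lemma~\ref{L-mrank def}, and taking suprema over $B$ and $C$ delivers this direction.

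For $\mrank(\cM_2) \leq \mrank(\cM_1) + \mrank(\cM_3)$, the same exact sequence and Lemma~\ref{L-rank} give, for any $A \in \cF(\cM_2)$,
$$\rank(\langle F^{-1}A\rangle) = \rank(\langle F^{-1}A\rangle \cap \cM_1) + \rank(\langle F^{-1}\pi(A)\rangle).$$
Since both $\lim_F \rank(\langle F^{-1}A\rangle)/|F|$ and $\lim_F \rank(\langle F^{-1}\pi(A)\rangle)/|F|$ exist by Lemma~\ref{L-mrank def}, so does $L_1(A) := \lim_F \rank(\langle F^{-1}A\rangle \cap \cM_1)/|F|$, and the second term is bounded by $\mrank(\cM_3)$. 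Thus it suffices to show $L_1(A) \leq \mrank(\cM_1)$.

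To establish the latter, fix $\varepsilon > 0$ and, by the infimum form of Lemma~\ref{L-mrank def}, choose $F^* \in \cF(\Gamma)$ for which $\rank(\langle (F^*)^{-1}A\rangle \cap \cM_1)/|F^*|$ and $\rank(\langle (F^*)^{-1}\pi(A)\rangle)/|F^*|$ are each within $\varepsilon$ of their limits. Let $B \in \cF(\cM_1)$ be a generating set of the finitely generated abelian group $\langle (F^*)^{-1}A\rangle \cap \cM_1$, and for a sufficiently Folner $F' \in \cF(\Gamma)$, apply the Ornstein-Weiss quasi-tiling theorem to obtain pairwise disjoint right translates $F^*t_1, \dots, F^*t_m \subseteq F'$ covering all but an $\varepsilon$-fraction of $F'$. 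Writing $K_F \subseteq \Zb^{F \times A}$ for the kernel of the map $(n_{s,a}) \mapsto \sum_{s,a} n_{s,a}\,s^{-1}\pi(a)$ into $\cM_3$, the shifts $K_{F^*,t_i}$ embed as a direct sum inside $K_{F'}$, and combining the boundary bound $|F'|-m|F^*| \leq \varepsilon|F'|$ with the near-multiplicativity $\rank(\langle (F')^{-1}\pi(A)\rangle) \approx m\rank(\langle (F^*)^{-1}\pi(A)\rangle)$ supplied by the choice of $F^*$ yields $\rank(K_{F'}) - \rank(\bigoplus_{i} K_{F^*,t_i}) = O(\varepsilon|F'|)$. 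Pushing this through the natural surjection $K_F \twoheadrightarrow \langle F^{-1}A\rangle \cap \cM_1$, whose image on the tiled side is $\sum_i t_i^{-1}\langle B\rangle \subseteq \langle (F')^{-1}B\rangle$, and using subadditivity of rank, we obtain
$$\rank(\langle (F')^{-1}A\rangle \cap \cM_1) \leq \rank(\langle (F')^{-1}B\rangle) + O(\varepsilon|F'|).$$
Dividing by $|F'|$, taking the limit along Folner sets, and letting $\varepsilon \to 0$ gives $L_1(A) \leq \mrank(\cM_1)$.

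The main obstacle is the rank accounting in this quasi-tiling step: while the direct-sum embedding $\bigoplus_i K_{F^*,t_i} \hookrightarrow K_{F'}$ is immediate from tile disjointness, controlling the cokernel rank by $O(\varepsilon|F'|)$ requires precisely the near-multiplicativity of $\rank(\langle F^{-1}\pi(A)\rangle)$ over the quasi-tiling, which is why both the Folner character of $F'$ and the infimum form of Lemma~\ref{L-mrank def} are essential. Once this is secured, subadditivity of rank propagates the estimate from relation modules into their images in $\cM_1$.
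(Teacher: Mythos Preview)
Your argument for $\mrank(\cM_2)\ge\mrank(\cM_1)+\mrank(\cM_3)$ coincides with the paper's Lemma~\ref{L-addition}. For the reverse inequality your route is genuinely different: the paper never bounds $L_1(A)=\lim_F\rank(\langle F^{-1}A\rangle\cap\cM_1)/|F|$ directly in an arbitrary $\cM_2$, but first proves the case $\cM_2\subseteq(\Zb\Gamma)^n$ with $\cM_1$ finitely generated (Lemma~\ref{L-addition2}) via the support-counting bound $\rank\bigl((\cM_1\cap\langle F^{-1}A_2\rangle)/\langle F^{-1}A_1\rangle\bigr)\le n\,|K_1^{-1}(\Gamma\setminus F)\cap K_2^{-1}F|$, and then bootstraps through increasing and decreasing limits (Lemmas~\ref{L-increasing limit mrank}, \ref{L-decreasing limit mrank}, \ref{L-addition3}--\ref{L-addition5}). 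Your device of passing to the relation groups $K_F\subseteq\Zb^{F\times A}$ is attractive because $\rank K_F=|F|\,|A|-\rank\langle F^{-1}\pi(A)\rangle$ is genuinely additive over disjoint windows, sidestepping the failure of subadditivity for $F\mapsto\rank(\langle F^{-1}A\rangle\cap\cM_1)$. The paper's path is more elementary (no tiling machinery) but requires the multi-step reduction; yours is a single direct estimate once the tiling is in hand.

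The gap is not in the rank accounting you flag in your last paragraph but one step earlier, in the tiling itself. The Ornstein--Weiss quasi-tiling theorem does not give, for a \emph{single} prescribed shape $F^*$, pairwise disjoint translates covering a $(1-\varepsilon)$-fraction of an arbitrarily invariant $F'$; in general no single shape admits this (for $F^*=\{0,1,3\}\subset\Zb$ the difference set is $\{-3,\dots,3\}$, so centers of disjoint translates are at least $4$ apart and the packing density never exceeds $3/4$). What Ornstein--Weiss actually supplies is finitely many shapes $F^*_1,\dots,F^*_N$, each of which can be taken in any prescribed $\cB(K,\delta)$ (hence all satisfying your rank condition on $\pi(A)$), together with translates that are only $\varepsilon$-disjoint. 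Your scheme can be salvaged with this correct input---pass to disjoint cores $T'_{j,c}\subseteq F^*_j$ with $|T'_{j,c}|\ge(1-\varepsilon)|F^*_j|$, take $B=\bigcup_jB_j$ where $B_j$ generates $\langle(F^*_j)^{-1}A\rangle\cap\cM_1$, and redo the estimate on $\rank\bigl(K_{F'}/\bigoplus_{j,c}K_{T'_{j,c},c}\bigr)$, absorbing the $O(\varepsilon|F^*_j|)$ losses from the cores---but this extra bookkeeping is not optional, and as written the sentence ``apply the Ornstein--Weiss quasi-tiling theorem to obtain pairwise disjoint right translates $F^*t_1,\dots,F^*t_m$'' does not hold.
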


To prove Theorem~\ref{T-addition mrank} we need some preparation.

\begin{lemma} \label{L-addition}

Let
$$0\rightarrow \cM_1\rightarrow \cM_2\rightarrow \cM_3\rightarrow 0$$
be a short exact sequence of $\Zb\Gamma$-modules. Then
$$ \mrank(\cM_2)\ge \mrank(\cM_1)+\mrank(\cM_3).$$
\end{lemma}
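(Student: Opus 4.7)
The plan is to exploit the additivity of rank (Lemma~\ref{L-rank}) by cooking up, for any choice of test sets in $\cM_1$ and $\cM_3$, a single test set in $\cM_2$ whose rank growth dominates the sum of the two individual rank growths. Identify $\cM_1$ with its image in $\cM_2$ and let $\pi\colon \cM_2\rightarrow \cM_3$ denote the quotient map.

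Fix arbitrary $A_1\in \cF(\cM_1)$ and $A_3\in \cF(\cM_3)$. Since $\pi$ is surjective, choose a finite lift $\widetilde{A}_3\subseteq \cM_2$ with $\pi(\widetilde{A}_3)=A_3$, and set $A:=A_1\cup \widetilde{A}_3\in \cF(\cM_2)$. For any $F\in \cF(\Gamma)$, applying $\pi$ to the subgroup $\left<F^{-1}A\right>\subseteq \cM_2$ gives
$$
\pi\bigl(\left<F^{-1}A\right>\bigr)=\left<F^{-1}\pi(A)\right>=\left<F^{-1}A_3\right>,
$$
because $\pi(A_1)=0$. The kernel of $\pi$ restricted to $\left<F^{-1}A\right>$ is $\left<F^{-1}A\right>\cap \cM_1$, which contains $\left<F^{-1}A_1\right>$. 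Lemma~\ref{L-rank} applied to the short exact sequence
$$
0\rightarrow \left<F^{-1}A\right>\cap \cM_1\rightarrow \left<F^{-1}A\right>\rightarrow \left<F^{-1}A_3\right>\rightarrow 0
$$
therefore yields
$$
\rank\bigl(\left<F^{-1}A\right>\bigr)\ge \rank\bigl(\left<F^{-1}A_1\right>\bigr)+\rank\bigl(\left<F^{-1}A_3\right>\bigr).
$$

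Divide by $|F|$ and let $F\in \cF(\Gamma)$ become more and more left invariant. The limits on both sides exist by Lemma~\ref{L-mrank def}, and we obtain
$$
\lim_F \frac{\rank(\left<F^{-1}A\right>)}{|F|}\ge \lim_F \frac{\rank(\left<F^{-1}A_1\right>)}{|F|}+\lim_F \frac{\rank(\left<F^{-1}A_3\right>)}{|F|}.
$$
The left-hand side is bounded above by $\mrank(\cM_2)$. Taking the supremum over $A_1\in \cF(\cM_1)$ and $A_3\in \cF(\cM_3)$ on the right yields $\mrank(\cM_2)\ge \mrank(\cM_1)+\mrank(\cM_3)$, as desired.

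There is no genuine obstacle here: the argument is a direct transfer of the additivity of ordinary rank across a short exact sequence, enabled by the freedom to lift a finite subset of $\cM_3$ to $\cM_2$. The only mild point to be careful about is that $\left<F^{-1}A\right>\cap \cM_1$ may be strictly larger than $\left<F^{-1}A_1\right>$, but since rank is monotone under inclusion of subgroups, this only helps the inequality. The reverse inequality, needed to complete Theorem~\ref{T-addition mrank}, will require a considerably more delicate argument and is presumably the content of the subsequent portion of Section~\ref{S-mean rank}.
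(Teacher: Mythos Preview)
Your proof is correct and follows essentially the same approach as the paper: lift a finite subset of $\cM_3$ to $\cM_2$, adjoin a finite subset of $\cM_1$, and apply Lemma~\ref{L-rank} to the short exact sequence $0\rightarrow \left<F^{-1}A\right>\cap \cM_1\rightarrow \left<F^{-1}A\right>\rightarrow \left<F^{-1}A_3\right>\rightarrow 0$ to obtain the pointwise rank inequality. Your observation that $\left<F^{-1}A\right>\cap \cM_1$ may strictly contain $\left<F^{-1}A_1\right>$, and that monotonicity of rank handles this, matches the paper's reasoning exactly.
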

\begin{proof} Denote by $\pi$ the homomorphism $\cM_2\rightarrow \cM_3$. We shall think of $\cM_1$ as a submodule of $\cM_2$.
It suffices to show that for any $A_1\in \cF(\cM_1)$ and any $A_3\in \cF(\cM_3)$, taking $A_3'\in \cF(\cM_2)$ with $\pi(A_3')=A_3$ and setting $A_2=A_1\cup A_3'\in \cF(\cM_2)$ one has
$$\lim_F \frac{\rank(\left<F^{-1}A_2\right>)}{|F|}\ge \lim_F \frac{\rank(\left<F^{-1}A_1\right>)}{|F|}+\lim_F \frac{\rank(\left<F^{-1}A_3\right>)}{|F|}.$$
In turn it suffices to show that for any $F\in \cF(\Gamma)$, one has
\begin{align} \label{E-addition}
\rank(\left<F^{-1}A_2\right>)\ge \rank(\left<F^{-1}A_1\right>)+\rank(\left<F^{-1}A_3\right>).
\end{align}
Set $\sM_j=\left<F^{-1}A_j\right>$ for $j=1, 2, 3$. Then $\pi(\sM_2)=\sM_3$ and $\sM_1\subseteq \cM_1\cap \sM_2$. From the short exact sequence
$$0\rightarrow \cM_1\cap \sM_2\rightarrow \sM_2\rightarrow \sM_3\rightarrow 0$$
of abelian groups, by Lemma~\ref{L-rank}
we have
\begin{align*}
\rank(\sM_2)=\rank(\cM_1\cap \sM_2)+\rank(\sM_3)\ge \rank(\sM_1)+\rank(\sM_3),
\end{align*}
yielding \eqref{E-addition}.
\end{proof}

\begin{lemma} \label{L-addition2}
Theorem~\ref{T-addition mrank} holds when
$\cM_2$ is a submodule of $(\Zb\Gamma)^n$ for some $n\in \Nb$ and $\cM_1$ is finitely generated.
\end{lemma}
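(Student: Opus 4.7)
By Lemma~\ref{L-addition} it remains to prove the reverse inequality $\mrank(\cM_2)\le\mrank(\cM_1)+\mrank(\cM_3)$. Fix a finite $\Zb\Gamma$-generating set $A_1\subseteq\cM_1$ and let $K_1\in\cF(\Gamma)$ contain the coordinate-supports of the elements of $A_1$ under the embedding $\cM_1\subseteq(\Zb\Gamma)^n$. Given $A_2\in\cF(\cM_2)$, after enlarging I may assume $A_1\subseteq A_2$; let $K\in\cF(\Gamma)$ contain the coordinate-supports of $A_2$, and set $A_3:=\pi(A_2)\in\cF(\cM_3)$, where $\pi\colon\cM_2\twoheadrightarrow\cM_3$ is the quotient map. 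For each $F\in\cF(\Gamma)$ the restriction of $\pi$ to $\langle F^{-1}A_2\rangle$ is a surjection onto $\langle F^{-1}A_3\rangle$ with kernel $\cM_1\cap\langle F^{-1}A_2\rangle$, so Lemma~\ref{L-rank} yields
\[\rank(\langle F^{-1}A_2\rangle)=\rank(\cM_1\cap\langle F^{-1}A_2\rangle)+\rank(\langle F^{-1}A_3\rangle).\]

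The task then reduces to producing, for each sufficiently left-invariant $F$, a finite $F_0\subseteq\Gamma$ with $|F_0|/|F|\to 1$ along the Folner net and $\cM_1\cap\langle F^{-1}A_2\rangle\subseteq\langle F_0^{-1}A_1\rangle$. Granting such an $F_0$, monotonicity of rank and Lemma~\ref{L-mrank def} (applied to the generating set $A_1$ of $\cM_1$) give
\[\varlimsup_F\frac{\rank(\cM_1\cap\langle F^{-1}A_2\rangle)}{|F|}\le\varlimsup_F\Bigl[\frac{\rank(\langle F_0^{-1}A_1\rangle)}{|F_0|}\cdot\frac{|F_0|}{|F|}\Bigr]=\mrank(\cM_1).\]
Dividing the displayed equality by $|F|$, passing to the Folner limit, and taking the supremum over $A_2$ then yields $\mrank(\cM_2)\le\mrank(\cM_1)+\mrank(\cM_3)$.

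Constructing such an $F_0$ is where both hypotheses are used. The embedding $\cM_2\subseteq(\Zb\Gamma)^n$ constrains $\langle F^{-1}A_2\rangle$ to sit inside the coordinate-strip of elements whose coordinate-supports lie in $F^{-1}K$, so $\cM_1\cap\langle F^{-1}A_2\rangle$ lies inside this strip intersected with $\cM_1$. The finite generation of $\cM_1$ by $A_1$ then lets each such element $m$ be lifted through the surjection $\Phi\colon(\Zb\Gamma)^{|A_1|}\twoheadrightarrow\cM_1$ determined by $A_1$, and after subtracting a suitable element of $\ker\Phi$ one should be able to arrange the preimage to have coordinate-support inside the thickening $F^{-1}KK_1^{-1}$ of $F^{-1}$; setting $F_0:=K_1K^{-1}F$ so that $F_0^{-1}=F^{-1}KK_1^{-1}$, this gives $m\in\langle F_0^{-1}A_1\rangle$, and the Folner condition on $F$ forces $|F_0\setminus F|\le|K_1K^{-1}F\setminus F|=o(|F|)$. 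The main obstacle is the support-reducing correction through $\ker\Phi$: the kernel need not be finitely generated, so the correction has to be made locally near the boundary of $F^{-1}$, and one must verify that after correction the preimage really has coordinate-support within the claimed thickening. I expect this requires combining the Folner invariance of $F$ with a careful analysis of how cancellations can occur in the push-forward by $\Phi$, or alternatively replacing the support-reducing step by a $\Qb$-vector-space truncation argument inside $V:=\cM_1\otimes_\Zb\Qb\subseteq(\Qb\Gamma)^n$. This is the technical heart of the lemma.
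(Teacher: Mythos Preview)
Your reduction to bounding $\rank(\cM_1\cap\langle F^{-1}A_2\rangle)$ is exactly right, and matches the paper. The gap you identify is real, however, and you are attempting something harder than what is needed. The inclusion $\cM_1\cap\langle F^{-1}A_2\rangle\subseteq\langle F_0^{-1}A_1\rangle$ asks that every element of $\cM_1$ with small support (in $(\Zb\Gamma)^n$) admit a $\Zb\Gamma$-expression in $A_1$ whose \emph{coefficients} have controlled support. As you note, this is a support-reduction through $\ker\Phi$, and without finite generation of $\ker\Phi$ there is no evident mechanism for it; for general amenable $\Gamma$ this is a serious obstacle, not a technicality.

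The paper avoids this entirely by aiming for a weaker conclusion that suffices: rather than an inclusion, it bounds $\rank\bigl((\cM_1\cap\sM_2)/\sM_1\bigr)$ where $\sM_j=\langle F^{-1}A_j\rangle$. The trick is to take any expression of $x\in\cM_1$ in terms of $A_1$ and \emph{naively} split it as $x=y_x+z_x$ with $y_x\in\sM_1$ (the terms with coefficients supported in $F$) and $z_x\in\langle(\Gamma\setminus F)^{-1}A_1\rangle$ (the remaining terms). No correction is performed. The point is that for $x\in\cM_1\cap\sM_2$ one has $\supp(x)\subseteq F^{-1}K$ and $\supp(y_x)\subseteq F^{-1}K_1$, so $\supp(z_x)\subseteq F^{-1}K\cap(\Gamma\setminus F)^{-1}K_1$, a boundary set of size $o(|F|)$. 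Since $x\equiv z_x\pmod{\sM_1}$, every class in $(\cM_1\cap\sM_2)/\sM_1$ is represented by an element of $(\Zb\Gamma)^n$ supported on this boundary set, whence $\rank((\cM_1\cap\sM_2)/\sM_1)\le n\cdot|F^{-1}K\cap(\Gamma\setminus F)^{-1}K_1|$. Combining this with $\rank(\cM_1\cap\sM_2)=\rank(\sM_1)+\rank((\cM_1\cap\sM_2)/\sM_1)$ finishes the argument. The moral: control the \emph{support in $(\Zb\Gamma)^n$} of the error $z_x$, not the support of its coefficients relative to $A_1$.
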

\begin{proof} By Lemma~\ref{L-addition} it suffices to show that $\mrank(\cM_2)\le \mrank(\cM_1)+\mrank(\cM_3)$.
Denote by $\pi$ the homomorphism $\cM_2\rightarrow \cM_3$. We shall think of $\cM_1$ as a submodule of $\cM_2$. Take a finite generating subset $A_1$ of $\cM_1$. Then it suffices to show that
for any $A_2\in \cF(\cM_2)$, setting $A_3=\pi(A_2)\in \cF(\cM_3)$ one has
$$\lim_F \frac{\rank(\left<F^{-1}A_2\right>)}{|F|}\le \lim_F \frac{\rank(\left<F^{-1}A_1\right>)}{|F|}+\lim_F \frac{\rank(\left<F^{-1}A_3\right>)}{|F|}.$$
Replacing $A_2$ by $A_1\cup A_2$ if necessary, we may assume that $A_1\subseteq A_2$.

Let $F\in \cF(\Gamma)$.
Set $\sM_j=\left<F^{-1}A_j\right>$ for $j=1, 2, 3$. Then we have a short exact sequence
$$0\rightarrow \cM_1\cap \sM_2\rightarrow \sM_2\rightarrow \sM_3\rightarrow 0$$
of abelian groups. Denote by $K_j$ the union of supports of elements in $A_j$ as $\Zb^n$-valued functions on $\Gamma$ for $j=1, 2$.
Then $K_j$ is a finite subset of $\Gamma$ and elements of $\sM_j$ have support contained in $F^{-1}K_j$.
Note that every $x\in \cM_1$ can be written as $y_x+z_x$ for some $y_x\in \sM_1$ and some $z_x\in \left<(\Gamma\setminus F)^{-1}A_1\right>$.
The support of $z_x$ is contained in $(\Gamma\setminus F)^{-1}K_1$. Therefore, for any $x\in \cM_1\cap \sM_2$, the support of $z_x$ is contained in
$(\Gamma\setminus F)^{-1}K_1\cap F^{-1}(K_1\cup K_2)=(\Gamma\setminus F)^{-1}K_1\cap F^{-1}K_2$. It follows that
$$\rank((\cM_1\cap \sM_2)/\sM_1)\le n|(\Gamma\setminus F)^{-1}K_1\cap F^{-1}K_2|=n|K_1^{-1}(\Gamma\setminus F)\cap K_2^{-1}F|.$$
Thus, by Lemma~\ref{L-rank} we get
\begin{align*}
\rank(\sM_2)
&=\rank(\cM_1\cap \sM_2)+\rank(\sM_3)\\
&=\rank((\cM_1\cap \sM_2)/\sM_1)+\rank(\sM_1)+\rank(\sM_3)\\
&\le n|K_1^{-1}(\Gamma\setminus F)\cap K_2^{-1}F|+\rank(\sM_1)+\rank(\sM_3).
\end{align*}
Consequently,
\begin{eqnarray*}
& &\lim_F \frac{\rank(\left<F^{-1}A_2\right>)}{|F|}-\lim_F \frac{\rank(\left<F^{-1}A_1\right>)}{|F|}-\lim_F \frac{\rank(\left<F^{-1}A_3\right>)}{|F|}\\
&\le& \varlimsup_F\frac{n|K_1^{-1}(\Gamma\setminus F)\cap K_2^{-1}F|}{|F|}=0.
\end{eqnarray*}
\end{proof}

The following lemma is trivial, discussing the behavior of mean rank  under taking
increasing union of $\Zb\Gamma$-modules.

\begin{lemma} \label{L-increasing limit mrank}
Let $\cM$ be a $\Zb\Gamma$-module and $\{\cM_n\}_{n\in J}$ be an increasing net of submodules of $\cM$ with union $\cM$. Then $$\mrank(\cM)=\lim_{n\to\infty}\mrank(\cM_n)=\sup_{n\in J}\mrank(\cM_n).$$
\end{lemma}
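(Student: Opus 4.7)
The plan is to reduce the statement to two elementary observations about the defining supremum $\mrank(\cM)=\sup_{A\in\cF(\cM)}\lim_F \rank(\langle F^{-1}A\rangle)/|F|$, namely monotonicity under submodule inclusion and the fact that any finite subset of $\cM$ already lives in some $\cM_n$.

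First I would observe that mean rank is monotone: if $\cM'\subseteq \cM''$ are $\Zb\Gamma$-modules, then $\cF(\cM')\subseteq \cF(\cM'')$ and the quantity $\lim_F \rank(\langle F^{-1}A\rangle)/|F|$ attached to each $A\in\cF(\cM')$ is intrinsic to $A$ and $\Gamma$, so taking supremum over the smaller set can only be smaller: $\mrank(\cM')\le\mrank(\cM'')$. Applied to the net $\{\cM_n\}_{n\in J}$ and to each $\cM_n\subseteq \cM$, this shows that $\{\mrank(\cM_n)\}_{n\in J}$ is an increasing net with $\sup_{n\in J}\mrank(\cM_n)=\lim_{n\to\infty}\mrank(\cM_n)\le \mrank(\cM)$.

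For the reverse inequality, I would fix an arbitrary $A\in\cF(\cM)$. Since $A$ is finite and $\cM=\bigcup_{n\in J}\cM_n$ with $\{\cM_n\}$ increasing, there exists $n_0\in J$ with $A\subseteq \cM_{n_0}$. Then
\[
\lim_F \frac{\rank(\langle F^{-1}A\rangle)}{|F|}\le \mrank(\cM_{n_0})\le \sup_{n\in J}\mrank(\cM_n).
\]
Taking the supremum over $A\in\cF(\cM)$ yields $\mrank(\cM)\le \sup_{n\in J}\mrank(\cM_n)$, which combined with the previous paragraph completes the proof. There is no serious obstacle here; the argument is purely a matter of unwinding the definition and exploiting that mean rank is witnessed by finite sets, so each finite witness in $\cM$ is already a witness in some $\cM_n$.
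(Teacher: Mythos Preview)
Your proof is correct and is exactly the elementary argument the paper has in mind; the paper declares the lemma ``trivial'' and gives no proof, and your two-step reasoning (monotonicity of $\mrank$ under submodule inclusion, plus every $A\in\cF(\cM)$ lying in some $\cM_n$) is precisely the intended unwinding of Definition~\ref{D-mean rank}.
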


The next lemma says that when $\cM$ is finitely generated, to compute the mean rank, it is enough to do calculation for one finite generating set.

\begin{lemma} \label{L-finitely generated}
Let $\cM$ be a finitely generated $\Zb\Gamma$-module with a finite generating subset $A$. Then
$$ \mrank(\cM)=\lim_F \frac{\rank(\left<F^{-1}A\right>)}{|F|}.$$
\end{lemma}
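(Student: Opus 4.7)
The plan is to show that the supremum defining $\mrank(\cM)$ is already attained (as the relevant limit) by the generating set $A$, so the additional flexibility of ranging over all $A'\in\cF(\cM)$ buys nothing extra. One direction, namely $\mrank(\cM)\ge \lim_F \frac{\rank(\left<F^{-1}A\right>)}{|F|}$, is immediate from the definition since $A\in \cF(\cM)$. So the real work is the reverse inequality.

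For the other direction, I would take an arbitrary $A'\in \cF(\cM)$ and exploit that $A$ generates $\cM$ as a $\Zb\Gamma$-module: there exists $K\in \cF(\Gamma)$ (which we may enlarge to contain $e$) such that every element of $A'$ is a finite $\Zb$-linear combination of elements of $K\cdot A=\{sa:s\in K,\,a\in A\}$. A direct computation with the $\Gamma$-action then gives the inclusion
$$\left<F^{-1}A'\right>\subseteq \left<F^{-1}KA\right>=\left<(K^{-1}F)^{-1}A\right>$$
for every $F\in \cF(\Gamma)$, and consequently $\rank(\left<F^{-1}A'\right>)\le \rank(\left<(K^{-1}F)^{-1}A\right>)$.

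Writing this bound as
$$\frac{\rank(\left<F^{-1}A'\right>)}{|F|}\le \frac{\rank(\left<(K^{-1}F)^{-1}A\right>)}{|K^{-1}F|}\cdot \frac{|K^{-1}F|}{|F|},$$
I would then let $F$ become more and more left-invariant. A standard Følner argument shows that if $F$ is $(K^{-1},\varepsilon)$-invariant then $K^{-1}F$ is also asymptotically Følner and $|K^{-1}F|/|F|\to 1$. Combined with Lemma~\ref{L-mrank def} applied to $K^{-1}F$ in place of $F$, the right-hand side converges to $\lim_F\frac{\rank(\left<F^{-1}A\right>)}{|F|}\cdot 1$. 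Taking the supremum over $A'$ yields $\mrank(\cM)\le \lim_F\frac{\rank(\left<F^{-1}A\right>)}{|F|}$, giving equality.

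The only slightly delicate step is the Følner manipulation verifying that left-invariance of $F$ implies left-invariance of $K^{-1}F$ together with $|K^{-1}F|/|F|\to 1$; but this is a routine consequence of the definition of $\cB(K,\delta)$ and does not require any new ideas. Everything else is a straightforward unwinding of definitions plus the existence of the limit from Lemma~\ref{L-mrank def}.
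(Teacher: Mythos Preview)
Your proposal is correct and follows essentially the same approach as the paper: use the generating property to find $K\in\cF(\Gamma)$ with $A'\subseteq\left<KA\right>$, deduce $\left<F^{-1}A'\right>\subseteq\left<(K^{-1}F)^{-1}A\right>$, and then use the F{\o}lner property to replace $K^{-1}F$ by $F$ in the limit. The only cosmetic difference is that the paper writes $A'\subseteq\left<K^{-1}A\right>$ (your $K$ is their $K^{-1}$) and compresses the F{\o}lner manipulation into the single line $\lim_F\frac{\rank(\left<(KF)^{-1}A\right>)}{|F|}=\lim_F\frac{\rank(\left<(KF)^{-1}A\right>)}{|KF|}=\lim_F\frac{\rank(\left<F^{-1}A\right>)}{|F|}$.
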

\begin{proof} It suffices to show
$$\lim_F \frac{\rank(\left<F^{-1}A'\right>)}{|F|}\le \lim_F \frac{\rank(\left<F^{-1}A\right>)}{|F|}$$
for every $A'\in \cF(\cM)$. We have $A'\subseteq \left<K^{-1}A\right>$ for some $K\in \cF(\Gamma)$.
For any $F\in \cF(\Gamma)$, we have
$$\left<F^{-1}A'\right>\subseteq \left<(KF)^{-1}A\right>.$$
Thus
\begin{align*}
\lim_F \frac{\rank(\left<F^{-1}A'\right>)}{|F|}
&\le \lim_F \frac{\rank(\left<(KF)^{-1}A\right>)}{|F|}\\
&=\lim_F \frac{\rank(\left<(KF)^{-1}A\right>)}{|KF|}\\
&=\lim_F \frac{\rank(\left<F^{-1}A\right>)}{|F|}.
\end{align*}
\end{proof}

The next lemma discusses the behavior of mean rank under taking decreasing direct limit for finitely generated $\Zb\Gamma$-modules.

\begin{lemma} \label{L-decreasing limit mrank}
Let $\cM$ be a finitely generated $\Zb\Gamma$-module. Let $\{\cM_n\}_{n\in J}$ be an increasing net of submodules of $\cM$ with $\infty\not\in J$. Set $\cM_\infty=\bigcup_{n\in J}\cM_n$. Then
$$\mrank(\cM/\cM_\infty)=\lim_{n\to\infty}\mrank(\cM/\cM_n)=\inf_{n\in J} \mrank(\cM/\cM_n).$$
\end{lemma}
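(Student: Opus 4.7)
The plan is to prove the two inequalities in $\mrank(\cM/\cM_\infty)=\inf_{n\in J}\mrank(\cM/\cM_n)$ separately. For the easy direction, for $n\le n'$ in $J$ the natural surjection $\cM/\cM_n\twoheadrightarrow \cM/\cM_{n'}$ fits into a short exact sequence with kernel $\cM_{n'}/\cM_n$, so Lemma~\ref{L-addition} gives $\mrank(\cM/\cM_n)\ge\mrank(\cM/\cM_{n'})$. Thus $n\mapsto \mrank(\cM/\cM_n)$ is a decreasing net, its limit equals its infimum, and the same argument applied to the sequence $0\to\cM_\infty/\cM_n\to\cM/\cM_n\to\cM/\cM_\infty\to 0$ shows each $\mrank(\cM/\cM_n)\ge\mrank(\cM/\cM_\infty)$.

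For the reverse inequality, fix a finite generating subset $A$ of $\cM$ and let $\bar A_n$, $\bar A_\infty$ be its images in $\cM/\cM_n$ and $\cM/\cM_\infty$ respectively (both are finite generating sets). Combining Lemma~\ref{L-finitely generated} with the infimum formula from Lemma~\ref{L-mrank def}, one has
\begin{equation*}
\mrank(\cM/\cM_n)=\inf_{F\in\cF(\Gamma)}\frac{\rank(\left<F^{-1}\bar A_n\right>)}{|F|},\qquad
\mrank(\cM/\cM_\infty)=\inf_{F\in\cF(\Gamma)}\frac{\rank(\left<F^{-1}\bar A_\infty\right>)}{|F|}.
\end{equation*}
Given $\varepsilon>0$ I choose $F\in\cF(\Gamma)$ with $\rank(\left<F^{-1}\bar A_\infty\right>)/|F|<\mrank(\cM/\cM_\infty)+\varepsilon$; it then suffices to show that, for this $F$, the equality $\rank(\left<F^{-1}\bar A_n\right>)=\rank(\left<F^{-1}\bar A_\infty\right>)$ holds for all sufficiently large $n\in J$.

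The key observation, and main technical step, is that the abelian group $\left<F^{-1}A\right>\subseteq\cM$ is finitely generated (being $\Zb$-spanned by the finite set $F^{-1}A$), hence Noetherian as a $\Zb$-module. Setting $H_n:=\left<F^{-1}A\right>\cap\cM_n$, the net $\{H_n\}_{n\in J}$ is an increasing net of subgroups of $\left<F^{-1}A\right>$ whose union $H_\infty:=\left<F^{-1}A\right>\cap\cM_\infty$ is again finitely generated. Its finitely many generators each lie in some $\cM_{n_i}$, and directedness of $J$ provides $n_0\in J$ with $H_\infty\subseteq\cM_{n_0}$, forcing $H_n=H_\infty$ for all $n\ge n_0$. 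Applying Lemma~\ref{L-rank} to the short exact sequence
\begin{equation*}
0\to H_n\to \left<F^{-1}A\right>\to \left<F^{-1}\bar A_n\right>\to 0
\end{equation*}
(and the analogous one with $H_\infty$ in place of $H_n$) then gives the desired equality of ranks for $n\ge n_0$, whence $\mrank(\cM/\cM_n)<\mrank(\cM/\cM_\infty)+\varepsilon$; letting $\varepsilon\to 0$ completes the proof.

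The main obstacle is precisely this Noetherian stabilization step: one needs the finite generation of $\cM$ in order to guarantee that the intersections $H_n$ stabilize at a single index $n_0\in J$, and it is also this step that explains why the infimum formulation in Lemma~\ref{L-mrank def} — rather than merely the existence of the limit along Følner sequences — is indispensable, since the chosen $F$ need not come from any Følner-type sequence.
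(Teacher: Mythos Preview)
Your proof is correct and follows essentially the same approach as the paper: both use Lemma~\ref{L-addition} for the easy inequality, then combine Lemmas~\ref{L-finitely generated} and \ref{L-mrank def} to write the mean ranks as infima over $F\in\cF(\Gamma)$, and for a fixed $F$ invoke the Noetherian property of the finitely generated abelian group $\left<F^{-1}A\right>$ to force stabilization of the intersections $\left<F^{-1}A\right>\cap\cM_n$. Your concluding remark on why the infimum formulation (rather than merely the F{\o}lner limit) is essential also mirrors the paper's Remark~\ref{R-mrank}.
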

\begin{proof} By Lemma~\ref{L-addition} we have $\mrank(\cM/\cM_n)\ge \mrank(\cM/\cM_{m})\ge \mrank(\cM/\cM_\infty)$ for all $n, m\in J$ with $m\ge n$.
Thus
$$\lim_{n\to\infty}\mrank(\cM/\cM_n)=\inf_{n\in J} \mrank(\cM/\cM_n)\ge \mrank(\cM/\cM_\infty).$$

Denote by $\pi_n$ the quotient map $\cM\rightarrow \cM/\cM_n$ for $n\in J\cup\{\infty\}$. Let $A$ be a finite generating subset of $\cM$.
By Lemmas~\ref{L-finitely generated} and \ref{L-mrank def} we have
$$ \mrank(\cM/\cM_n)=\inf_{F\in \cF(\Gamma)}\frac{\rank(\left<F^{-1}\pi_n(A)\right>)}{|F|}$$
for all $n\in J\cup\{\infty\}$.

Let $F\in \cF(\Gamma)$.
Note that $\cM_\infty\cap \left<F^{-1}A\right>=\bigcup_{n\in J}(\cM_n\cap \left<F^{-1}A\right>)$.
Thus
$$\rank(\cM_n\cap \left<F^{-1}A\right>)\rightarrow \rank(\cM_\infty\cap \left<F^{-1}A\right>)$$
as $n\to \infty$.
By Lemma~\ref{L-rank} we get
$$\rank(\left<F^{-1}\pi_n(A)\right>)\rightarrow \rank(\left<F^{-1}\pi_\infty(A)\right>)$$
as $n\to \infty$.
Thus
$$ \lim_{n\to\infty}\mrank(\cM/\cM_n)\le \lim_{n\to \infty}\frac{\rank(\left<F^{-1}\pi_n(A)\right>)}{|F|}=\frac{\rank(\left<F^{-1}\pi_\infty(A)\right>)}{|F|}.$$
Taking infimum over $F\in \cF(\Gamma)$, we get
$\lim_{n\to\infty}\mrank(\cM/\cM_n)\le \mrank(\cM/\cM_\infty)$.
\end{proof}

\begin{remark} \label{R-desreasing limit}
Lemma~\ref{L-decreasing limit mrank} does not hold for  arbitrary $\Zb\Gamma$-module $\cM$. For example, take $\Gamma$ to be the trivial group, $\cM=\bigoplus_{j\in \Nb}\Zb$, and $\cM_n=\bigoplus_{1\le j\le n}\Zb$ for all $n\in \Nb$. Then $\mrank(\cM/\cM_n)=\infty$ for all $n\in \Nb$ while $\mrank(\cM/\cM_\infty)=0$.
\end{remark}

\begin{lemma} \label{L-addition3}
Theorem~\ref{T-addition mrank} holds when
$\cM_2$ is a finitely generated submodule of $(\Zb\Gamma)^n$ for some $n\in \Nb$.
\end{lemma}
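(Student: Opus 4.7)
The plan is to reduce to Lemma~\ref{L-addition2} by approximating $\cM_1$ from below by finitely generated submodules, and then to use the monotonicity results of Lemmas~\ref{L-increasing limit mrank} and \ref{L-decreasing limit mrank} to pass to the limit. Thinking of $\cM_1$ as a submodule of $\cM_2$, let $J$ be the directed set of finite subsets of $\cM_1$ and, for each $k\in J$, let $\cM_{1,k}$ be the $\Zb\Gamma$-submodule of $\cM_2$ generated by $k$. Then $\{\cM_{1,k}\}_{k\in J}$ is an increasing net of finitely generated submodules of $\cM_2$ with union $\cM_1$.

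For each $k\in J$, consider the short exact sequence
$$0\rightarrow \cM_{1,k}\rightarrow \cM_2\rightarrow \cM_2/\cM_{1,k}\rightarrow 0.$$
Since $\cM_2$ is a submodule of $(\Zb\Gamma)^n$ and $\cM_{1,k}$ is finitely generated, Lemma~\ref{L-addition2} applies and gives
$$\mrank(\cM_2)=\mrank(\cM_{1,k})+\mrank(\cM_2/\cM_{1,k}).$$

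Now I pass to the limit along $J$. By Lemma~\ref{L-increasing limit mrank}, $\mrank(\cM_{1,k})$ increases to $\mrank(\cM_1)$. Since $\cM_2$ is finitely generated, Lemma~\ref{L-decreasing limit mrank} applies to the increasing net $\{\cM_{1,k}\}_{k\in J}$ inside $\cM_2$ and yields
$$\lim_{k}\mrank(\cM_2/\cM_{1,k})=\mrank(\cM_2/\cM_1)=\mrank(\cM_3).$$
The left-hand side $\mrank(\cM_2)$ is constant in $k$, so combining these two limits gives $\mrank(\cM_2)=\mrank(\cM_1)+\mrank(\cM_3)$, as desired.

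The only delicate point is making sure that the decreasing-limit lemma is applied in its correct generality: it requires the ambient module to be finitely generated, which is exactly the hypothesis of the present lemma (and is why the argument does not immediately extend to arbitrary submodules of $(\Zb\Gamma)^n$ without further work). Once this is observed, the proof is essentially a two-line bookkeeping from the previously established lemmas.
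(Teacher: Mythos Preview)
Your proof is correct and follows essentially the same approach as the paper's: approximate $\cM_1$ by an increasing net of finitely generated submodules, apply Lemma~\ref{L-addition2} at each stage, and pass to the limit via Lemmas~\ref{L-increasing limit mrank} and \ref{L-decreasing limit mrank}. Your observation that the finite-generation hypothesis on $\cM_2$ is precisely what makes Lemma~\ref{L-decreasing limit mrank} applicable is exactly the point.
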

\begin{proof} We shall think of $\cM_1$ as a submodule of $\cM_2$. Take an increasing net of finitely generated submodules $\{\cM_j\}_{j\in J}$ of $\cM_1$ such that $\{1, 2, 3\}\cap J=\emptyset$ and $\bigcup_{j\in J}\cM_j=\cM_1$.
From Lemmas~\ref{L-increasing limit mrank} and \ref{L-decreasing limit mrank} we have
$$\mrank(\cM_1)=\lim_{j\to \infty}\mrank(\cM_j)$$
and
$$ \mrank(\cM_3)=\mrank(\cM_2/\cM_1)=\lim_{j\to \infty}\mrank(\cM_2/\cM_j).$$
By Lemma~\ref{L-addition2} we have
\begin{align*}
\mrank(\cM_2)=\mrank(\cM_j)+\mrank(\cM_2/\cM_j)
\end{align*}
for every $j\in J$. Letting $j\to \infty$, we obtain $\mrank(\cM_2)=\mrank(\cM_1)+\mrank(\cM_3)$.
\end{proof}

\begin{lemma} \label{L-addition4}
Theorem~\ref{T-addition mrank} holds when
$\cM_2$ is a submodule of $(\Zb\Gamma)^n$ for some $n\in \Nb$.
\end{lemma}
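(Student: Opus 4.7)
The plan is to reduce to Lemma~\ref{L-addition3} by writing $\cM_2$ as an increasing directed union of finitely generated submodules, applying that lemma to each piece of the filtration, and passing to the limit via Lemma~\ref{L-increasing limit mrank}.

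Concretely, I would let $J$ be the directed set of all finitely generated $\Zb\Gamma$-submodules of $\cM_2$, ordered by inclusion, so that $\cM_2=\bigcup_{j\in J}\cM_2^{(j)}$ and each $\cM_2^{(j)}$ is itself a finitely generated submodule of $(\Zb\Gamma)^n$. For each $j\in J$ set
\[ \cM_1^{(j)}:=\cM_1\cap \cM_2^{(j)}, \qquad \cM_3^{(j)}:=(\cM_2^{(j)}+\cM_1)/\cM_1\cong \cM_2^{(j)}/\cM_1^{(j)}, \]
so there is a short exact sequence
\[ 0\to \cM_1^{(j)}\to \cM_2^{(j)}\to \cM_3^{(j)}\to 0. \]
Since $\cM_2^{(j)}$ is a finitely generated submodule of $(\Zb\Gamma)^n$, Lemma~\ref{L-addition3} applies and yields
\[ \mrank(\cM_2^{(j)})=\mrank(\cM_1^{(j)})+\mrank(\cM_3^{(j)}). \]

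Next I would verify that for each $i\in\{1,2,3\}$ the net $\{\cM_i^{(j)}\}_{j\in J}$ is increasing with union $\cM_i$: for $i=2$ this is the defining property; for $i=1$ every element of $\cM_1$ lies in a finitely generated submodule of $\cM_2$, hence in some $\cM_1^{(j)}$; and for $i=3$ the union of the submodules $(\cM_2^{(j)}+\cM_1)/\cM_1$ of $\cM_3$ equals $(\cM_2+\cM_1)/\cM_1=\cM_3$. Lemma~\ref{L-increasing limit mrank} then gives $\mrank(\cM_i)=\lim_j\mrank(\cM_i^{(j)})$ for $i=1,2,3$, and since each of these nets of values in $[0,\infty]$ is monotone increasing, taking the limit on both sides of the displayed identity yields $\mrank(\cM_2)=\mrank(\cM_1)+\mrank(\cM_3)$, as the limit of a sum of monotone nonnegative nets equals the sum of the limits.

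I do not anticipate any substantive obstacle: Lemma~\ref{L-addition3} supplies the genuine content, and what remains is purely the bookkeeping verification that the three filtrations have the correct unions. The one subtle point worth flagging is that we do not impose any finite-generation hypothesis on $\cM_1$; this causes no trouble here because Lemma~\ref{L-addition3} already allows an arbitrary submodule in the leftmost slot of the short exact sequence, so the induced filtration $\cM_1^{(j)}=\cM_1\cap \cM_2^{(j)}$ of $\cM_1$ needs nothing more than Lemma~\ref{L-increasing limit mrank} to recover $\mrank(\cM_1)$.
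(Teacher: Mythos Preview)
Your argument is correct and follows essentially the same route as the paper: both filter $\cM_2$ by finitely generated submodules, apply Lemma~\ref{L-addition3} to each term (with $\cM_1$ replaced by its intersection with that term), and pass to the limit via Lemma~\ref{L-increasing limit mrank}. Your write-up is a bit more explicit about verifying the three filtrations have the right unions and about why the limit of the sum is the sum of the limits, but there is no substantive difference.
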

\begin{proof} We shall think of $\cM_1$ as a submodule of $\cM_2$.
Take an increasing net of finitely generated submodules $\{\cM_j\}_{j\in J}$ of $\cM_2$ such that $\{1, 2, 3\}\cap J=\emptyset$ and $\bigcup_{j\in J}\cM_j=\cM_2$.
From Lemma~\ref{L-increasing limit mrank}  we have
$$\mrank(\cM_2)=\lim_{j\to \infty}\mrank(\cM_j),$$
and
$$\mrank(\cM_1)=\lim_{j\to \infty} \mrank(\cM_j\cap \cM_1), $$
and
$$ \mrank(\cM_3)=\mrank(\cM_2/\cM_1)=\lim_{j\to \infty}\mrank((\cM_j+\cM_1)/\cM_1)=\lim_{j\to \infty}\mrank(\cM_j/(\cM_j\cap \cM_1)).$$
By Lemma~\ref{L-addition3} we have
\begin{align*}
\mrank(\cM_j)=\mrank(\cM_j\cap \cM_1)+\mrank(\cM_j/(\cM_j\cap \cM_1))
\end{align*}
for every $j\in J$. Letting $j\to \infty$, we obtain $\mrank(\cM_2)=\mrank(\cM_1)+\mrank(\cM_3)$.
\end{proof}

\begin{lemma} \label{L-addition5}
Theorem~\ref{T-addition mrank} holds when
$\cM_2$ is  a finitely generated $\Zb\Gamma$-module.
\end{lemma}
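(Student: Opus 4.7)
The plan is to reduce the general finitely generated case to Lemma~\ref{L-addition4} by realizing $\cM_2$ as a quotient of a finite-rank free module. Since $\cM_2$ is finitely generated, I would pick $n \in \Nb$ and a surjective $\Zb\Gamma$-module homomorphism $\pi \colon (\Zb\Gamma)^n \to \cM_2$, then set $\cK = \ker \pi$ and $\cL = \pi^{-1}(\cM_1)$, so that $\cK \subseteq \cL \subseteq (\Zb\Gamma)^n$ and we have three short exact sequences of $\Zb\Gamma$-modules:
\begin{align*}
0 &\to \cK \to \cL \to \cM_1 \to 0, \\
0 &\to \cK \to (\Zb\Gamma)^n \to \cM_2 \to 0, \\
0 &\to \cL \to (\Zb\Gamma)^n \to \cM_3 \to 0.
\end{align*}
In each of these the middle term is a submodule of $(\Zb\Gamma)^n$, so Lemma~\ref{L-addition4} applies and yields respectively $\mrank(\cL) = \mrank(\cK) + \mrank(\cM_1)$, $\mrank((\Zb\Gamma)^n) = \mrank(\cK) + \mrank(\cM_2)$, and $\mrank((\Zb\Gamma)^n) = \mrank(\cL) + \mrank(\cM_3)$. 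Substituting the first of these into the third and comparing with the second formally gives $\mrank(\cK) + \mrank(\cM_2) = \mrank(\cK) + \mrank(\cM_1) + \mrank(\cM_3)$, which yields the theorem after canceling $\mrank(\cK)$.

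The only point that still needs attention is legitimacy of this cancellation, i.e.\ that $\mrank(\cK)$ is finite. For this I would use the standard generating set $\{e_1, \ldots, e_n\}$ of $(\Zb\Gamma)^n$: the elements $s^{-1} e_i$ with $s \in F$ and $i \in \{1, \ldots, n\}$ are $\Zb$-linearly independent in the free abelian group underlying $(\Zb\Gamma)^n$, so $\rank(\langle F^{-1}\{e_1,\dots,e_n\}\rangle) = n|F|$ for every $F \in \cF(\Gamma)$, and hence Lemma~\ref{L-finitely generated} gives $\mrank((\Zb\Gamma)^n) = n$. Then Lemma~\ref{L-addition} applied to $0 \to \cK \to (\Zb\Gamma)^n \to \cM_2 \to 0$ forces $\mrank(\cK) \le \mrank((\Zb\Gamma)^n) = n < \infty$, which justifies the cancellation.

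I do not expect a serious obstacle: the argument is pure bookkeeping, combining three instances of Lemma~\ref{L-addition4} with the trivial finiteness bound $\mrank(\cK) \le n$. The one thing to be careful about is that the three sequences fall strictly within the hypothesis of Lemma~\ref{L-addition4} (middle module a submodule of some $(\Zb\Gamma)^{n'}$, with no finite-generation assumption on the kernel), which is why the identification of $\cL$ as a submodule of $(\Zb\Gamma)^n$, rather than an abstract extension, is essential.
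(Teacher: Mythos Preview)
Your proposal is correct and follows essentially the same route as the paper: both pick a surjection $\pi\colon(\Zb\Gamma)^n\to\cM_2$, apply Lemma~\ref{L-addition4} to the three short exact sequences with middle terms $\pi^{-1}(\cM_1)$, $(\Zb\Gamma)^n$, and $(\Zb\Gamma)^n$, and then cancel $\mrank(\ker\pi)$ after observing via Lemmas~\ref{L-addition} and~\ref{L-finitely generated} that it is finite. Your treatment is slightly more explicit in computing $\mrank((\Zb\Gamma)^n)=n$, whereas the paper only records $\mrank((\Zb\Gamma)^n)<+\infty$; otherwise the arguments coincide.
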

\begin{proof} We shall think of $\cM_1$ as a submodule of $\cM_2$. Take a surjective $\Zb\Gamma$-module homomorphism $\pi: (\Zb\Gamma)^n\rightarrow \cM_2$ for some $n\in \Nb$.
By Lemma~\ref{L-addition4} we have
$$ \mrank((\Zb\Gamma)^n)=\mrank(\ker \pi)+\mrank(\cM_2),$$
and
\begin{align*}
\mrank((\Zb\Gamma)^n)&=\mrank(\pi^{-1}(\cM_1))+\mrank((\Zb\Gamma)^n/\pi^{-1}(\cM_1))\\
&=\mrank(\pi^{-1}(\cM_1))+\mrank(\cM_2/\cM_1)\\
&=\mrank(\pi^{-1}(\cM_1))+\mrank(\cM_3),
\end{align*}
and
$$\mrank(\pi^{-1}(\cM_1))=\mrank(\ker \pi)+\mrank(\cM_1).$$
Therefore
$$\mrank(\ker \pi)+\mrank(\cM_1)+\mrank(\cM_3)=\mrank(\ker \pi)+\mrank(\cM_2).$$
By Lemmas~\ref{L-addition} and \ref{L-finitely generated} we have $\mrank(\ker \pi)\le \mrank((\Zb\Gamma)^n)<+\infty$.
It follows that $\mrank(\cM_1)+\mrank(\cM_3)=\mrank(\cM_2)$.
\end{proof}

Finally, replacing Lemma~\ref{L-addition3} in the proof of Lemma~\ref{L-addition4} by Lemma~\ref{L-addition5}, we obtain Theorem~\ref{T-addition mrank} in full generality.

\begin{remark} \label{R-SVV}
For a unital ring $R$, a {\it length function} $L$ on (left) $R$-modules \cite{NR} means associating a value $L(\sM)\in \Rb_{\ge 0}\cup \{+\infty\}$ for each $R$-module $\sM$ such that $L(0)=0$, additivity holds for short exact sequences of $R$-modules (i.e. the analogue of Lemma~\ref{L-rank}), and $L(\sM)$ is equal to the supremum of $L(\sM')$ for $\sM'$ ranging over finitely generated $R$-submodules of $\sM$. If $L$ is a length function on $R$-modules such that $L(R)<+\infty$, then one can define a mean length on (left) $R\Gamma$-modules for any discrete amenable group $\Gamma$, and all the results in this section including Theorem~\ref{T-addition mrank} still hold without change of proof.

A length function is called {\it discrete} if the set of its finite values is order-isomorphic to $\Nb$. In \cite{SVV}, given a length function $L$ on $R$-modules, Salce, V\'{a}mos and Virili defined an invariant ${\rm ent}_L$ on $R\Zb$-modules whose finitely generated $R$-submodules all take finite $L$-values, in exactly the same way as we define the mean rank in Definition~\ref{D-mean rank}. They also proved Theorem~\ref{T-addition mrank} for $R\Zb$-modules whose finitely generated $R$-submodules take finite $L$-values, under the condition that $L$ is discrete. They do not assume $L(R)<+\infty$ as we do here, and there are interesting examples of length functions satisfying $L(R)=+\infty$. On the other hand, they need $\Gamma=\Zb$ and $L$ to be discrete, and their proof of the addition formula (Theorem~\ref{T-addition mrank}) uses both of these two conditions in a fundamental way.
\end{remark}

\section{Mean dimension and mean rank} \label{S-mdim vs mrank}

Throughout the rest of this article, for any discrete abelian group $\sM$, we denote by $\widehat{\sM}$ the Pontryagin dual of $\sM$, which is a compact Hausdorff abelian group consisting of all group homomorphisms $\sM\rightarrow \Rb/\Zb$. For any $\Zb\Gamma$-module $\cM$, the module structure of $\cM$ gives rise to an action of
$\Gamma$ on the discrete abelian group $\cM$ by group homomorphisms, which in turn gives rise to an action of $\Gamma$ on $\widehat{\cM}$ by continuous group homomorphisms. Explicitly, for any $a\in \cM$, $x\in \widehat{\cM}$, and $s\in \Gamma$, one has
$$ (sx)(a)=x(s^{-1}a).$$
By Pontryagin duality, every algebraic action of $\Gamma$, i.e. an action of $\Gamma$ on a compact abelian group by continuous group homomorphisms, is of the form
$\Gamma\curvearrowright\widehat{\cM}$ for some $\Zb\Gamma$-module $\cM$.

In this section we prove the following

\begin{theorem} \label{T-mdim vs mrank}
For any $\Zb\Gamma$-module $\cM$, one has $\mdim(\widehat{\cM})=\mrank(\cM)$.
\end{theorem}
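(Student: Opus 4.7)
The plan is to prove the inequalities $\mdim(\widehat{\cM})\le\mrank(\cM)$ and $\mdim(\widehat{\cM})\ge\mrank(\cM)$ separately, both via analysis of a canonical family of open covers of $\widehat{\cM}$ parameterized by finite subsets of $\cM$. Fix a two-element open cover $\{V_0,V_1\}$ of $\Rb/\Zb$ (for concreteness $V_i:=\Rb/\Zb\setminus\{i/2\}$), and for every $A\in\cF(\cM)$ set $\cU_A:=\bigvee_{a\in A}\mathrm{ev}_a^{-1}\{V_0,V_1\}$, where $\mathrm{ev}_a\colon\widehat{\cM}\to\Rb/\Zb$, $x\mapsto x(a)$. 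The identity $(sx)(a)=x(s^{-1}a)$ yields $s^{-1}\mathrm{ev}_a^{-1}\{V_0,V_1\}=\mathrm{ev}_{s^{-1}a}^{-1}\{V_0,V_1\}$, whence
$$\cU_A^F=\cU_{F^{-1}A}\qquad(F\in\cF(\Gamma)).$$
Moreover $\cU_A=\pi_A^{-1}(\cV_A)$, where $\pi_A\colon\widehat{\cM}\twoheadrightarrow\widehat{\left<A\right>}$ is the continuous surjection dual to $\left<A\right>\hookrightarrow\cM$ and $\cV_A$ is the restriction to $\widehat{\left<A\right>}\subseteq(\Rb/\Zb)^A$ of the standard product cover. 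Since $\left<A\right>$ is finitely generated, $\widehat{\left<A\right>}$ is a compact abelian Lie group with $\dim\widehat{\left<A\right>}=\rank\left<A\right>$.

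For the upper bound, fix a compatible translation-invariant metric $d$ on $\widehat{\cM}$ and a finite open cover $\cU$ with Lebesgue number $\varepsilon>0$. Since $\{\mathrm{ev}_a:a\in\cM\}$ separates points of the compact space $\widehat{\cM}$, for $A$ sufficiently large the fibres of $\pi_A$ (cosets of $\left<A\right>^\perp$) have $d$-diameter $<\varepsilon/2$; the definition of covering dimension then lets me refine $\cV_A$ to a cover $\cV$ of $\widehat{\left<A\right>}$ of mesh $<\varepsilon/2$ and order $\rank\left<A\right>$, so that $\pi_A^{-1}(\cV)\succ\cU$. The same computation as for $\cU_A^F$ gives $\bigvee_{s\in F}s^{-1}\pi_A^{-1}(\cV)=\pi_{F^{-1}A}^{-1}(\widetilde{\cV}_F)$ for a canonical cover $\widetilde{\cV}_F$ of $\widehat{\left<F^{-1}A\right>}$; refining $\widetilde{\cV}_F$ further to order $\rank\left<F^{-1}A\right>$ and pulling back refines $\cU^F$, giving $\cD(\cU^F)\le\rank\left<F^{-1}A\right>$. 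Dividing by $|F|$, taking the Følner limit, and passing to the supremum over $\cU$ completes this direction.

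For the lower bound it suffices, thanks to $\cU_A^F=\cU_{F^{-1}A}$, to show $\cD(\cU_B)\ge\rank\left<B\right>$ for every $B\in\cF(\cM)$. Set $r=\rank\left<B\right>$ and pick $b_1,\dots,b_r\in B$ linearly independent modulo torsion in $\left<B\right>$. The continuous homomorphism
$$\Phi:=(\mathrm{ev}_{b_1},\dots,\mathrm{ev}_{b_r})\colon\widehat{\cM}\longrightarrow(\Rb/\Zb)^r$$
is surjective (being dual to the injection $\Zb^r\hookrightarrow\cM$, $(n_i)\mapsto\sum n_ib_i$), and so on top \v{C}ech cohomology $\Phi^\ast$ sends the fundamental class to $b_1\wedge\cdots\wedge b_r$, which is non-zero in $\bigwedge^r\cM$ modulo torsion; hence $\Phi$ is essential. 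By the classical essential-family theorem of dimension theory, the pullback
$$\Phi^{-1}\{V_0,V_1\}^r=\bigvee_{i=1}^r\mathrm{ev}_{b_i}^{-1}\{V_0,V_1\}$$
then has $\cD\ge r$, and since $\cU_B$ refines this sub-cover, $\cD(\cU_B)\ge r$. Dividing by $|F|$, taking the Følner limit, then the supremum over $A$ yields $\mdim(\widehat{\cM})\ge\mrank(\cM)$.

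The technically delicate step is the essential-family inequality used in the lower bound: a continuous map $\Phi\colon X\to(\Rb/\Zb)^r$ from a compact Hausdorff space with $\Phi^\ast\ne 0$ in top \v{C}ech cohomology forces the pullback of the standard product cover to have $\cD\ge r$. The bound $\cD\le r$ is automatic from $\cD((\Rb/\Zb)^r)=r$, but the matching lower bound in a possibly infinite-dimensional $X$ must be extracted from the non-vanishing of $\Phi^\ast$, which I expect to handle via the Alexander-Spanier cohomological characterisation of covering dimension together with the fact that the standard product cover on $(\Rb/\Zb)^r$ is witnessed by an essential family of $r$ disjoint-closed-sets pairs.
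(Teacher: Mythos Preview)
Your upper bound is essentially the paper's argument in different clothing: both reduce $\cD(\cU^F)$ to the covering dimension of the compact group $\widehat{\langle F^{-1}A\rangle}$, which equals $\rank\langle F^{-1}A\rangle$. The paper phrases this via ``$\cU$-compatible maps'' (Lemma~\ref{L-compatible map} and the proof of Lemma~\ref{L-mdim<mrank}), which avoids any metric hypothesis; your Lebesgue-number argument tacitly assumes $\widehat{\cM}$ is metrizable, i.e.\ $\cM$ is countable, which the theorem does not require. This is easily repaired by replacing the metric with the direct compactness argument of Lemma~\ref{L-compatible map}.

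The lower bound is where you diverge from the paper, and where there is a real gap. You reduce everything to the claim that a map $\Phi\colon X\to(\Rb/\Zb)^r$ with $\Phi^{\ast}\neq 0$ on top \v{C}ech cohomology forces $\cD(\Phi^{-1}\cU)\ge r$ for the product cover, and then say you ``expect to handle'' this via Alexander--Spanier cohomology and essential families. But this is precisely the substance of the lower bound; you have traded one unproved inequality for another of comparable depth. Making your outline rigorous requires (i) identifying $\check H^{r}(\widehat{\cM};\Qb)$ well enough to verify $\Phi^{\ast}\neq 0$, (ii) converting that cohomological non-vanishing into the essentiality of the pulled-back family in $\widehat{\cM}$, and (iii) deducing the $\cD$-bound from essentiality. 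Step (ii) in particular is not standard: essentiality of a family does \emph{not} in general pull back along surjections, and the cohomological criterion you invoke needs a genuine argument, not a citation.

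The paper sidesteps all of this with an elementary construction (Lemma~\ref{L-mdim>mrank}). Given a maximal independent subset $B\subseteq F^{-1}A$, it builds an explicit embedding $\iota\colon[0,\tfrac12]^{B}\hookrightarrow\widehat{\cM}$ by choosing a $\Qb$-linear retraction $\Qb\otimes_{\Zb}\cM\to\spn_{\Qb}\{1\otimes b:b\in B\}$ and using it to manufacture characters. One then checks that no member of $\iota^{-1}(\cU^F)$ meets two opposing faces of the cube, and the classical Lebesgue tiling lemma \cite[Lemma~3.2]{LW} gives $\cD(\cU^F)\ge|B|=\rank\langle F^{-1}A\rangle$ directly. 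This uses nothing beyond linear algebra over $\Qb$ and a combinatorial lemma about cubes. If you want to rescue your cohomological route, the cleanest fix is to use exactly this embedded cube to witness the essential family --- at which point the cohomology becomes superfluous and you are back to the paper's proof.
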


Peters gave a formula computing the entropy of $\Gamma\curvearrowright \widehat{\cM}$ in terms of the data of $\cM$ \cite[Theorem 6]{Peters} \cite[Theorem 4.10]{LT}, which plays a crucial role in recent study of the entropy of algebraic actions \cite{CL, LT}. Theorem~\ref{T-mdim vs mrank} is an analogue of Peters' formula for computing the mean dimension of $\Gamma\curvearrowright \widehat{\cM}$ in terms of the data of $\cM$. When $\Gamma$ is the trivial group,
one recovers the classical result of Pontryagin that for any  discrete abelian group $\sM$ one has $\dim(\widehat{\sM})=\rank(\sM)$ \cite[page 259]{Pontryagin}.

Theorem~\ref{T-mdim vs mrank} follows from Lemmas~\ref{L-mdim>mrank} and \ref{L-mdim<mrank} below.

The proof of the following lemma is inspired by the argument in \cite[page 259]{Pontryagin}.

\begin{lemma} \label{L-mdim>mrank}
For any $\Zb\Gamma$-module $\cM$, one has $ \mdim(\widehat{\cM})\ge \mrank(\cM)$.
\end{lemma}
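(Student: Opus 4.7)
My plan is to adapt Pontryagin's classical proof that $\dim(\widehat{\sM})\geq\rank(\sM)$ for a discrete abelian group $\sM$ \cite[page 259]{Pontryagin} to the dynamical setting. For each $A\in\cF(\cM)$ I will exhibit a finite open cover $\cU_A$ of $\widehat{\cM}$ satisfying $\cD(\cU_A^F)\geq\rank(\left<F^{-1}A\right>)$ for every $F\in\cF(\Gamma)$; dividing by $|F|$, letting $F$ become left invariant, and then supping over $A$ will yield $\mdim(\widehat{\cM})\geq\mrank(\cM)$. Concretely, fix a two-element open cover $\cW=\{W_1,W_2\}$ of $\Rb/\Zb$ by two proper open arcs chosen so that for every $n\geq 1$ the product cover $\cW^n$ on the $n$-torus satisfies $\cD(\cW^n)=n$ (such an essential cover exists by the Lebesgue covering theorem). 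For $a\in\cM$ let $\pi_a\colon\widehat{\cM}\to\Rb/\Zb$ be the character $\pi_a(x)=x(a)$, and set $\cU_A:=\bigvee_{a\in A}\pi_a^{-1}\cW$.

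Next I would unwind the $F$-join. Using $(sx)(a)=x(s^{-1}a)$ one checks that $s^{-1}\pi_a^{-1}\cW=\pi_{s^{-1}a}^{-1}\cW$, so writing $B:=F^{-1}A$,
$$\cU_A^F=\bigvee_{b\in B}\pi_b^{-1}\cW=\Psi_B^{-1}\cW^B,$$
where $\Psi_B\colon\widehat{\cM}\to(\Rb/\Zb)^B$ is $x\mapsto(x(b))_{b\in B}$. Set $r:=\rank(\left<B\right>)$ and pick linearly independent $b_1,\dots,b_r\in B$ (any maximal linearly independent subset of $B$ has size $r$). The inclusion $\Zb^r\cong\left<b_1,\dots,b_r\right>\hookrightarrow\cM$ is Pontryagin-dual to a continuous surjective homomorphism $\pi_b\colon\widehat{\cM}\twoheadrightarrow(\Rb/\Zb)^r$; the sub-join $\bigvee_{i=1}^r\pi_{b_i}^{-1}\cW=\pi_b^{-1}\cW^r$ of $\cU_A^F$ shows that $\cU_A^F$ refines $\pi_b^{-1}\cW^r$, hence $\cD(\cU_A^F)\geq\cD(\pi_b^{-1}\cW^r)$.

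The technical heart is then to show $\cD(\pi_b^{-1}\cW^r)\geq r$, the dynamical counterpart of Pontryagin's inequality $\dim\widehat{\sM}\geq\rank\sM$. This is where I expect the main obstacle: pulling back an essential cover along a general continuous surjection can \emph{decrease} $\cD$ (as in a Peano-curve example), so I must exploit the fact that $\pi_b$ is an open continuous surjective homomorphism between compact metric abelian groups. I would proceed by taking a refinement $\cU'$ of $\pi_b^{-1}\cW^r$ of order $k$; from a subordinate partition of unity I get a continuous nerve map $\widehat{\cM}\to|N(\cU')|^{(k)}$; composing with the simplicial map $|N(\cU')|\to|N(\cW^r)|$ induced by the refinement produces a continuous $\cW^r$-compatible map $\Psi\colon\widehat{\cM}\to|N(\cW^r)|^{(k)}$; and finally one descends $\Psi$ to a continuous $\cW^r$-compatible map $(\Rb/\Zb)^r\to P$ with $\dim P\leq k$ by using the principal $\ker\pi_b$-bundle structure of $\pi_b$ (local trivializations combined with a Haar-averaging argument over $\ker\pi_b$ applied coordinate-wise in the ambient Euclidean space of the nerve, followed by a retraction onto a $k$-dimensional polyhedron). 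The classical identity $\cD(\cW^r)=r$ on $(\Rb/\Zb)^r$ then forces $k\geq r$, as required. The most delicate point I foresee is ensuring that the averaging does not push the descended map out of the low-dimensional skeleton; this is what motivates carrying out the averaging in the ambient Euclidean space rather than inside the nerve, and then retracting at the end.
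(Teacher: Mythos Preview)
Your reduction is sound and matches the paper's: build $\cU_A$ from a circle cover $\cW$ whose members separate $0$ from $1/2$, identify $\cU_A^F$ with $\Psi_B^{-1}\cW^B$, extract a maximal linearly independent subset $b_1,\dots,b_r$ of $B=F^{-1}A$, and reduce to $\cD(\pi_b^{-1}\cW^r)\geq r$ for the surjective homomorphism $\pi_b:\widehat{\cM}\twoheadrightarrow(\Rb/\Zb)^r$. The monotonicity $\cD(\cU_A^F)\geq\cD(\pi_b^{-1}\cW^r)$ is correct since $\cU_A^F$ refines $\pi_b^{-1}\cW^r$.

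The gap is in your descent argument for the last step. Haar-averaging $\Psi:\widehat{\cM}\to|N(\cW^r)|^{(k)}$ over the fibers of $\pi_b$ in an ambient Euclidean space will in general push the image out of the $k$-skeleton: the fiberwise barycenter of a map into the $k$-skeleton lies only in its convex hull, which can be the entire simplex. There is no retraction of a simplex onto its $k$-skeleton when $k$ is below the top dimension, so ``retracting onto a $k$-dimensional polyhedron'' is not available. Moreover, even granting some retraction, nothing guarantees the resulting map $(\Rb/\Zb)^r\to P$ is $\cW^r$-compatible: averaging merges points that $\Psi$ separated, so preimages of points can grow beyond single members of $\cW^r$. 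Without compatibility you cannot invoke $\cD(\cW^r)=r$.

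The paper bypasses descent by going in the opposite direction: it builds a continuous \emph{section} of $\pi_b$ over the cube $[0,1/2]^r$ and pulls the cover back. The obstruction to doing this na\"ively is that $\langle b_1,\dots,b_r\rangle\cong\Zb^r$ need not be a direct summand of $\cM$; the fix is to pass to $\Qb\otimes_\Zb\cM$, where the $\Qb$-span $W$ of the images of the $b_i$ does admit a $\Qb$-linear retraction $\psi:\Qb\otimes_\Zb\cM\to W$. For each $\lambda\in[0,1/2]^r$ the composite $\cM\to\Qb\otimes_\Zb\cM\xrightarrow{\psi}W\xrightarrow{g_\lambda}\Rb\to\Rb/\Zb$ defines $\iota_\lambda\in\widehat{\cM}$ with $\iota_\lambda(b_i)=\lambda_i+\Zb$. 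The embedding $\iota:[0,1/2]^r\hookrightarrow\widehat{\cM}$ pulls $\cU_A^F$ back to a cover of the cube with no member meeting opposite faces, and \cite[Lemma~3.2]{LW} gives $\cD(\cU_A^F)\geq\cD(\iota^{-1}\cU_A^F)\geq r$ directly. This replaces your bundle/averaging machinery with a two-line algebraic construction.
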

\begin{proof} It suffices to show that for any $A\in \cF(\cM)$, there is some finite open cover $\cU$ of $\widehat{\cM}$ such that
$$\lim_F \frac{\cD(\cU^F)}{|F|}\ge \lim_F \frac{\rank(\left<F^{-1}A\right>)}{|F|}.$$
Take a finite open cover $\cU$ of $\widehat{\cM}$ such that for any $a\in A$, no item $U$ of $\cU$ intersects both $a^{-1}(\Zb)$ and $a^{-1}(1/2+\Zb)$.
Then it suffices to show that for every $F\in \cF(\Gamma)$, one has $\cD(\cU^F)\ge \rank(\left<F^{-1}A\right>)$.

Take a maximal linearly independent subset $B$ of  $F^{-1}A$. Then $|B|=\rank(\left<F^{-1}A\right>)$. Consider the natural abelian group homomorphism $\varphi: \cM\rightarrow \Qb\otimes_\Zb\cM$ sending $a$ to $1\otimes a$. Then $\varphi$ is injective on $B$,  and $\varphi(B)$ is linear independent. Denote by $W$ the $\Qb$-linear span of $\varphi(B)$. By taking a basis of the $\Qb$-vector space $\Qb\otimes_\Zb\cM$ containing $\varphi(B)$, we can find a $\Qb$-linear map $\psi: \Qb\otimes_\Zb\cM \rightarrow W$ being the identity map on $W$.

Now we define an embedding $\iota$ from $[0, 1/2]^B$ into $\widehat{\cM}$ as follows. For each $\lambda=(\lambda_b)_{b\in B}\in [0, 1/2]^B$, we define
a $\Qb$-linear map $g_\lambda: W\rightarrow \Rb$ sending $\varphi(b)$ to $\lambda_b$ for all $b\in B$, and
an abelian group homomorphism
$\iota_\lambda: \cM\rightarrow \Rb/\Zb$ sending $a$ to $g_\lambda(\psi(\varphi(a)))+\Zb$. Then $\iota_\lambda \in \widehat{\cM}$. Clearly the map $\iota: [0, 1/2]^B\rightarrow \widehat{\cM}$ sending $\lambda$ to $\iota_\lambda$ is continuous. Note that $\iota_\lambda(b)=\lambda_b+\Zb$ for all $b\in B$. Thus $\iota$ is injective and hence is an embedding.

The pull back $\iota^{-1}(\cU^F)$ is a finite open cover of $[0, 1/2]^B$.
We claim that no item of $\iota^{-1}(\cU^F)$ intersects two opposing faces of the cube $[0, 1/2]^B$. Suppose that some item $\iota^{-1}(V)$ of $\iota^{-1}(\cU^F)$ contains two points $\lambda$ and $\lambda'$ in opposing faces of $[0, 1/2]^B$, where $V$ is an item of $\cU^F$. Say, $V=\bigcap_{s\in F}s^{-1}U_s$ with $U_s\in \cU$ for each $s\in F$, and $\lambda_{b_0}=0$ and $\lambda'_{b_0}=1/2$ for some $b_0\in B$.  Then $b_0=s_0^{-1}a_0$ for some $s_0\in F$ and $a_0\in A$. Now we have
\begin{align*}
(s_0\iota_\lambda)(a_0)=\iota_\lambda(s_0^{-1}a_0)=\iota_\lambda(b_0)=\lambda_{b_0}+\Zb=\Zb,
\end{align*}
and similarly $(s_0\iota_{\lambda'})(a_0)=1/2+\Zb$. Since $\lambda \in \iota^{-1}(V)$, we have $\iota_\lambda\in V\subseteq s_0^{-1}U_{s_0}$, and hence
$s_0\iota_\lambda\in U_{s_0}$. Similarly, $s_0\iota_{\lambda'}\in U_{s_0}$. Thus $U_{s_0}$ intersects both $a_0^{-1}(\Zb)$ and $a_0^{-1}(1/2+\Zb)$, which contradicts  our choice of $\cU$. This proves our claim.

By \cite[Lemma 3.2]{LW} for any finite open cover $\cV$ of $[0, 1/2]^B$ with no item intersecting two opposing faces of the cube $[0, 1/2]^B$ one has $\ord(\cV)\ge |B|$. It follows
that
$$\cD(\cU^F)\ge \cD(\iota^{-1}(\cU^F))\ge |B|=\rank(\left<F^{-1}A\right>)$$
as desired.
\end{proof}

For compact spaces $X$ and $Y$ and an open cover $\cU$ of $X$, a continuous map $\varphi: X\rightarrow Y$ is said to be {\it $\cU$-compatible} if for every $y\in Y$, the set $\varphi^{-1}(y)$ is contained in some item of $\cU$.

\begin{lemma} \label{L-compatible map}
Let $\sM$ be a discrete abelian group. For any finite open cover $\cU$ of $\widehat{\sM}$, there exists an $A\in \cF(\sM)$ such that the map
$\widehat{\sM}\rightarrow (\Rb/\Zb)^A$ sending $x$ to $(x(a))_{a\in A}$ is $\cU$-compatible.
\end{lemma}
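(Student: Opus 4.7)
The plan is to combine a Lebesgue-number argument for compact groups with the fact that annihilators of finite subsets of $\sM$ form a neighborhood basis of $0 \in \widehat{\sM}$. Note that the fibers of the map $\pi_A \colon \widehat{\sM} \to (\Rb/\Zb)^A$, $x \mapsto (x(a))_{a\in A}$, are cosets of the closed subgroup
$$ A^\perp := \{x \in \widehat{\sM} : x(a) = 0 \text{ for all } a \in A\} = \ker \pi_A, $$
so it will suffice to exhibit $A \in \cF(\sM)$ for which every coset $x + A^\perp$ is contained in some item of $\cU$.

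First I would produce a ``uniform'' neighborhood of $0$ refining $\cU$. Concretely, for each $x \in \widehat{\sM}$ pick $U_x \in \cU$ with $x \in U_x$ and, using continuity of addition at $(x,0)$, choose open $V_x \ni x$ and open $W_x \ni 0$ with $V_x + W_x \subseteq U_x$. By compactness of $\widehat{\sM}$, finitely many $V_{x_1}, \dots, V_{x_n}$ cover $\widehat{\sM}$; then $W := \bigcap_{i=1}^n W_{x_i}$ is an open neighborhood of $0$ with the property that for every $x \in \widehat{\sM}$ there exists $i$ with $x + W \subseteq V_{x_i} + W_{x_i} \subseteq U_{x_i} \in \cU$.

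Next I would show that some $A^\perp$ fits inside $W$. The family $\{A^\perp\}_{A \in \cF(\sM)}$ is downward directed, since $(A_1 \cup A_2)^\perp \subseteq A_1^\perp \cap A_2^\perp$, and by Pontryagin duality (characters separate points of $\sM$, equivalently, the restriction maps $\widehat{\sM} \to \widehat{\langle A\rangle}$ realize $\widehat{\sM}$ as an inverse limit) we have $\bigcap_{A \in \cF(\sM)} A^\perp = \{0\}$. If no $A^\perp$ were contained in $W$, then the closed sets $\{A^\perp \setminus W\}_{A \in \cF(\sM)}$ in the compact Hausdorff space $\widehat{\sM}$ would have the finite intersection property (by directedness), hence a common point, contradicting $\bigcap_A A^\perp \subseteq \{0\} \subseteq W$. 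So there is some $A \in \cF(\sM)$ with $A^\perp \subseteq W$.

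Finally, for such an $A$ and any $x \in \widehat{\sM}$, one has $x + A^\perp \subseteq x + W \subseteq U_{x_i}$ for some $i$, so every nonempty fiber $\pi_A^{-1}(y) = x + A^\perp$ is contained in an item of $\cU$, showing $\pi_A$ is $\cU$-compatible. There is no real obstacle here; the only point to be careful about is justifying that annihilators of finite subsets form a neighborhood basis of the identity, which is exactly the compactness/finite-intersection argument above.
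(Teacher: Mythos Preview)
Your proof is correct, but it takes a different route from the paper's. The paper argues purely topologically: it embeds $\widehat{\sM}$ into $(\Rb/\Zb)^{\sM}$ and, for each $x$, finds a basic open box $W_x$ (depending on a finite set $A_x$ of coordinates) contained in some member of $\cU$; compactness gives a finite subcover $\{W_{x_1},\dots,W_{x_n}\}$, and $A=\bigcup_i A_{x_i}$ works because any fiber of $\pi_A$ meeting $W_{x_i}$ is entirely contained in it. Your argument, by contrast, exploits the group structure: you first run a Lebesgue-number argument using continuity of addition to produce a single neighborhood $W$ of $0$ such that every translate $x+W$ lies in some $U\in\cU$, and then use compactness and the finite-intersection property to find $A$ with $A^\perp\subseteq W$. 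The paper's approach is slightly more elementary (it never invokes continuity of addition or translation invariance), while yours is more structural and makes explicit that the fibers are cosets of $A^\perp$; this latter viewpoint can be convenient if one wants to iterate or quantify the construction. One small remark: your appeal to Pontryagin duality for $\bigcap_A A^\perp=\{0\}$ is unnecessary---a nonzero $x\in\widehat{\sM}$ is by definition a nonzero homomorphism, so $x(a)\neq 0$ for some $a$.
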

\begin{proof} By Pontryagin duality, the natural map $\widehat{\sM}\rightarrow (\Rb/\Zb)^\sM$ sending $x$ to $(x(a))_{a\in \sM}$ is an embedding. Thus we may identify $\widehat{\sM}$ with its image, and think of $\widehat{\sM}$ as a closed subset of $(\Rb/\Zb)^\sM$.
For each $x\in \widehat{\sM}$, we can find some $A_x\in \cF(\sM)$ and an open neighborhood $V_{x, a}$ of $x(a)$ in $\Rb/\Zb$ for each $a\in A_x$ such that the open subset $W_x:=\{y\in \widehat{\sM}: y(a)\in V_{x, a} \mbox{ for all } a\in A_x\}$ of $\widehat{\sM}$ is contained in some item of $\cU$. Since $\widehat{\sM}$ is compact, we can find some $Y\in \cF(\widehat{\sM})$ such that $\{W_x\}_{x\in Y}$ covers $\widehat{\sM}$. Set $A=\bigcup_{x\in Y} A_x\in \cF(\sM)$.
Then the corresponding map $\widehat{\sM}\rightarrow (\Rb/\Zb)^A$ sending $x$ to $(x(a))_{a\in A}$ is $\cU$-compatible.
\end{proof}

\begin{lemma} \label{L-mdim<mrank}
For any $\Zb\Gamma$-module $\cM$, one has $\mdim(\widehat{\cM})\le \mrank(\cM)$.
\end{lemma}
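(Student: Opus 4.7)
The plan is to bound $\cD(\cU^F)$ from above, for every finite open cover $\cU$ of $\widehat{\cM}$ and every $F\in \cF(\Gamma)$, by $\rank(\left<F^{-1}A\right>)$ for a suitable $A\in \cF(\cM)$ depending only on $\cU$. Once this is done, dividing by $|F|$ and letting $F$ become more and more left invariant gives $\mdim(\cU)\le \mrank(\cM)$, and taking supremum over $\cU$ finishes the proof.

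Fix a finite open cover $\cU$ of $\widehat{\cM}$. First I would invoke Lemma~\ref{L-compatible map} to produce $A\in \cF(\cM)$ for which the evaluation map $\varphi\colon \widehat{\cM}\to (\Rb/\Zb)^A$, $x\mapsto (x(a))_{a\in A}$, is $\cU$-compatible. For $F\in \cF(\Gamma)$ consider the larger evaluation map
$$\varphi_F\colon \widehat{\cM}\to (\Rb/\Zb)^{F^{-1}A},\qquad x\mapsto (x(b))_{b\in F^{-1}A}.$$
The key observation is that for each $s\in F$, the projection of $\varphi_F(x)$ onto the $s^{-1}A$-coordinates equals $\varphi(sx)$, because $x(s^{-1}a)=(sx)(a)$. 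Since $\varphi$ is $\cU$-compatible, this projection is $s^{-1}\cU$-compatible, and taking the meet over $s\in F$ shows that $\varphi_F$ is $\cU^F$-compatible.

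Next, I would identify the image $\varphi_F(\widehat{\cM})$ via Pontryagin duality. Indeed $\varphi_F$ is dual to the abelian group homomorphism $\Zb^{F^{-1}A}\to \cM$ sending the standard generator $e_b$ to $b$, whose image is $\left<F^{-1}A\right>$. Factoring this homomorphism as a surjection followed by an inclusion and dualizing, one sees that $\varphi_F(\widehat{\cM})$ is isomorphic as a compact abelian group to $\widehat{\left<F^{-1}A\right>}$. By the classical Pontryagin theorem already cited in this section,
$$\dim(\varphi_F(\widehat{\cM}))=\dim\bigl(\widehat{\left<F^{-1}A\right>}\bigr)=\rank(\left<F^{-1}A\right>).$$

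The final step is the standard fact that for a continuous $\cU$-compatible map $\psi\colon X\to Y$ from a compact Hausdorff space $X$ one has $\cD(\cU)\le \dim(\psi(X))$: for each $y\in \psi(X)$ compactness gives an open neighborhood $W_y$ in $\psi(X)$ with $\psi^{-1}(W_y)$ contained in an element of $\cU$; passing to a finite subcover and then to a refinement of order at most $\dim(\psi(X))$, and pulling back by $\psi$, yields a refinement of $\cU$ of the same order. Applying this to $\varphi_F$ gives $\cD(\cU^F)\le \rank(\left<F^{-1}A\right>)$, and hence
$$\mdim(\cU)=\lim_F\frac{\cD(\cU^F)}{|F|}\le \lim_F\frac{\rank(\left<F^{-1}A\right>)}{|F|}\le \mrank(\cM).$$
The only subtle points are the $\cU^F$-compatibility of $\varphi_F$ (handled above by duality/coordinate-projection bookkeeping) and the compatible-map/covering-dimension inequality (a routine compactness argument); everything else is direct bookkeeping.
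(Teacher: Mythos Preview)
Your proof is correct and follows essentially the same route as the paper's: you invoke Lemma~\ref{L-compatible map}, build the joint evaluation map $\varphi_F$, show it is $\cU^F$-compatible, identify its image via Pontryagin duality as $\widehat{\left<F^{-1}A\right>}$, and apply the compatible-map bound $\cD(\cU^F)\le \dim(\varphi_F(\widehat{\cM}))$ together with Pontryagin's $\dim=\rank$ result. The only cosmetic differences are that the paper indexes the target of $\varphi_F$ by $F\times A$ rather than by the subset $F^{-1}A\subseteq\cM$, and cites \cite[Proposition 2.4]{LW} for the compatible-map inequality instead of sketching it as you do.
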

\begin{proof}
It suffices to show that for any finite open cover $\cU$ of $\widehat{\cM}$, one has $\lim_F\frac{\cD(\cU^F)}{|F|}\le \mrank(\cM)$. By Lemma~\ref{L-compatible map} we can find some $A\in \cF(\cM)$ such that the map
$\psi: \widehat{\cM}\rightarrow (\Rb/\Zb)^A$ sending $x$ to $(x(a))_{a\in A}$ is $\cU$-compatible. Then it suffices to show $\cD(\cU^F)\le \rank(\left<F^{-1}A\right>)$ for every $F\in \cF(\Gamma)$.

Denote by $\psi_F$ the map $\widehat{\cM}\rightarrow ((\Rb/\Zb)^A)^F$ sending $x$ to $(\psi(sx))_{s\in F}$.
Since $\psi$ is $\cU$-compatible, $\psi_F$ is $\cU^F$-compatible. Denote by $Z_F$ the image of $\psi_F$. By \cite[Proposition 2.4]{LW} for any compact Hausdorff space $X$ and any finite open cover $\cV$ of $X$, if there is a continuous $\cV$-compatible map from $X$ into some compact Hausdorff space $Y$, then $\cD(\cV)\le \dim(Y)$. Thus
$\cD(\cU^F)\le \dim(Z_F)$.

Note that $\psi_F$ is a group homomorphism, and hence $Z_F$ is a quotient group of $\widehat{\cM}$. By Pontryagin duality $Z_F=\widehat{\sM_F}$ for some subgroup $\sM_F$ of $\cM$. We may decompose $\varphi_F: \widehat{\cM}\rightarrow ((\Rb/\Zb)^A)^F$ naturally as $\widehat{\cM}\twoheadrightarrow Z_F\hookrightarrow ((\Rb/\Zb)^A)^F$. The corresponding dual map is
$\cM\hookleftarrow \sM_F\twoheadleftarrow (\Zb^A)^F$. The map $\cM\leftarrow (\Zb^A)^F$ sends $(\lambda_{a, s})_{a\in A, s\in F}$ to $\sum_{a\in A, s\in F}\lambda_{a, s}s^{-1}a$. Thus $\sM_F$ is equal to the image of $\cM\leftarrow (\Zb^A)^F$, which is exactly $\left<F^{-1}A\right>$.

By the result of Pontryagin \cite[page 259]{Pontryagin} we have $\dim(Z_F)=\rank(\sM_F)$. (Actually here $\sM_F$ is a finitely generated abelian group, hence it is easy to obtain $\dim(Z_F)=\rank(\sM_F)$.) Therefore
$$\cD(\cU^F)\le \dim(Z_F)=\rank(\sM_F)=\rank(\left<F^{-1}A\right>)$$
as desired.
\end{proof}

\section{Mean rank and von Neumann-L\"{u}ck rank} \label{S-mrank vs vrank}

In this section, we prove the following

\begin{theorem} \label{T-mrank vs vrank}
For any $\Zb\Gamma$-module $\cM$, one has $\mrank(\cM)=\vrank(\cM)$.
\end{theorem}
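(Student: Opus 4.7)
The plan is to verify that $\vrank$ enjoys the same three structural properties already established for $\mrank$ in Section~\ref{S-mean rank}, to use these to reduce the identity to finitely generated submodules of a free module, and to compare the two there via a F\o lner-type approximation of the von Neumann rank.

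First I would check: (i) $\vrank((\Zb\Gamma)^n)=n$, immediate from $\cN\Gamma\otimes_{\Zb\Gamma}(\Zb\Gamma)^n=(\cN\Gamma)^n$ and Theorem~\ref{T-dim basic}(3); (ii) additivity of $\vrank$ on short exact sequences of $\Zb\Gamma$-modules, obtained by applying Theorem~\ref{T-dim basic}(1) to the right-exact sequence produced by tensoring with $\cN\Gamma$ and using L\"uck's dimension-flatness of $\cN\Gamma$ over $\Zb\Gamma$ for amenable $\Gamma$ to dispose of the $\mathrm{Tor}_1^{\Zb\Gamma}(\cN\Gamma,\cdot)$ term; (iii) continuity of $\vrank$ on increasing unions of submodules, from commutativity of tensor with filtered colimits combined with Theorem~\ref{T-dim basic}(2). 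The parallel properties for $\mrank$ are $\mrank((\Zb\Gamma)^n)=n$ (a straightforward computation), Theorem~\ref{T-addition mrank}, and Lemma~\ref{L-increasing limit mrank}.

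Given these, the reduction proceeds in a familiar two-step fashion. By continuity on increasing unions, both sides behave well on directed unions of submodules, so one first reduces to $\cM$ finitely generated. Picking a presentation $0\to\cK\to(\Zb\Gamma)^n\to\cM\to 0$ and using additivity together with (i), the identity $\mrank(\cM)=\vrank(\cM)$ is equivalent to $\mrank(\cK)=\vrank(\cK)$ for submodules $\cK\subseteq(\Zb\Gamma)^n$; applying continuity once more reduces to finitely generated such $\cK$.

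For the core step, fix a finite generating set $A=\{a_1,\dots,a_m\}\subseteq(\Zb\Gamma)^n$ of $\cK$, and let $T:(\cN\Gamma)^m\to(\cN\Gamma)^n$ denote the $\cN\Gamma$-linear map sending $e_i\mapsto a_i$, which is left multiplication by a matrix in $M_{n,m}(\Zb\Gamma)$. By Lemma~\ref{L-finitely generated},
$$\mrank(\cK)=\lim_F\frac{\dim_\Cb\mathrm{span}_\Cb(F^{-1}A)}{|F|}$$
(the span taken in $(\Cb\Gamma)^n$, using $\dim_\Qb=\dim_\Cb$ for integer-valued vectors), while additivity applied to $0\to\cK\to(\Zb\Gamma)^n\to(\Zb\Gamma)^n/\cK\to 0$ together with (i) yields $\vrank(\cK)=\rank_{\cN\Gamma}(T)$. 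The identity $\mrank(\cK)=\vrank(\cK)$ then reduces to the amenable case of L\"uck's approximation theorem,
$$\rank_{\cN\Gamma}(T)=\lim_F\frac{\dim_\Cb T(\Cb[F]^m)}{|F|},$$
combined with the identification $T(\Cb[F]^m)=\mathrm{span}_\Cb(F\cdot A)$ and the interchangeability of $F$ and $F^{-1}$ in F\o lner limits over amenable groups. The main obstacle will be this last approximation step: although standard, it requires careful bookkeeping because $T(\Cb[F]^m)$ can spread slightly outside $(\Cb[F])^n$, so relating its dimension to the trace-based $\dim_{\cN\Gamma}$ demands a rank-nullity argument for an appropriate F\o lner compression of $T$ together with boundary estimates using near-invariance of $F$.
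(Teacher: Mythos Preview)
Your proposal is correct and rests on the same analytic inputs as the paper: L\"uck's dimension-flatness (your (ii); this is the paper's Lemma~\ref{L-addition vrank}, which routes through $\Cb\Gamma$ since L\"uck's statement is for $\cN\Gamma$ over $\Cb\Gamma$ rather than $\Zb\Gamma$) and Elek's F\o lner approximation (the paper's Lemma~\ref{L-Elek}, which in kernel form is equivalent via rank--nullity and the adjoint to your image-side approximation of $\rank_{\cN\Gamma}(T)$). The reduction is organized differently. The paper goes from finitely presented to finitely generated modules via the decreasing-limit Lemmas~\ref{L-decreasing limit mrank} and~\ref{L-decreasing limit vrank}, and then to arbitrary modules via increasing limits. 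You instead pass from a finitely generated $\cM$ to its syzygy $\cK\subseteq(\Zb\Gamma)^n$ via additivity and then to finitely generated submodules of $\cK$ via increasing limits---but a finitely generated $\cK\subseteq(\Zb\Gamma)^n$ is exactly $(\Zb\Gamma)^{1\times m}f$ for some $f$, so you land on the same core computation as Lemma~\ref{L-mrank vs vrank}. Your route has the mild advantage of avoiding an explicit decreasing-limit lemma for $\vrank$; note however that Theorem~\ref{T-addition mrank}, which you invoke as a black box, itself relies on Lemma~\ref{L-decreasing limit mrank}. One point to tighten in your item (iii): commutation of tensor with filtered colimits yields $\cN\Gamma\otimes_{\Zb\Gamma}\cM$ only as a direct limit of the $\cN\Gamma\otimes_{\Zb\Gamma}\cM_n$, and the transition maps need not be injective, so Theorem~\ref{T-dim basic}(2) does not apply directly; you must pass to the images inside $\cN\Gamma\otimes_{\Zb\Gamma}\cM$ and reuse dimension-flatness to see the kernels have dimension zero, exactly as in the paper's Lemma~\ref{L-increasing limit vrank}.
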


Theorem~\ref{T-main} follows from Theorems~\ref{T-mdim vs mrank} and \ref{T-mrank vs vrank}.

\subsection{Finitely presented case} \label{SS-finitely presented case}

In this subsection we prove Theorem~\ref{T-mrank vs vrank} for finitely presented $\Zb\Gamma$-modules. We need the following well-known right exactness of tensor functor \cite[Proposition 19.13]{AF} several times:

\begin{lemma} \label{L-right exact}
Let $R$ be a unital ring, and $M$ be a right $R$-module. For any exact sequence
$$M_1\rightarrow M_2\rightarrow M_3\rightarrow 0$$
of left $R$-modules, the sequence
$$M\otimes_RM_1\rightarrow M\otimes_RM_2\rightarrow M\otimes_RM_3\rightarrow 0$$
of abelian groups is exact.
\end{lemma}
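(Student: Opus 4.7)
The statement is the standard right-exactness of the tensor product functor over a unital ring. My plan is to verify directly the three conditions required: surjectivity of $M \otimes_R M_2 \to M \otimes_R M_3$, vanishing of the composition $M \otimes_R M_1 \to M \otimes_R M_3$, and the reverse inclusion $\ker(M \otimes_R M_2 \to M \otimes_R M_3) \subseteq \im(M \otimes_R M_1 \to M \otimes_R M_2)$. The first follows from surjectivity of $\beta: M_2 \to M_3$ combined with bilinearity: every simple tensor $m \otimes m_3$ lifts to $m \otimes m_2$ for any preimage $m_2 \in \beta^{-1}(m_3)$, and the general case follows by additivity. The vanishing of the composition is immediate from functoriality of $\otimes_R$ applied to the zero composition $\beta \circ \alpha: M_1 \to M_3$, where $\alpha: M_1 \to M_2$ denotes the other structure map.

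The only nontrivial step is the inclusion $\ker(\id_M \otimes \beta) \subseteq \im(\id_M \otimes \alpha)$. Denoting $N := \im(\id_M \otimes \alpha) \subseteq M \otimes_R M_2$, I would construct an inverse to the induced surjection $\pi: (M \otimes_R M_2)/N \to M \otimes_R M_3$ as follows. The assignment $(m, m_3) \mapsto m \otimes m_2 + N$, where $m_2$ is any chosen lift of $m_3$ under $\beta$, defines a map $\bar\phi: M \times M_3 \to (M \otimes_R M_2)/N$; it is well-defined because any two lifts of $m_3$ differ by an element of $\alpha(M_1)$, and $m \otimes \alpha(m_1) \in N$. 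One then checks that $\bar\phi$ is $R$-balanced and bi-additive, so by the universal property of the tensor product it descends to a homomorphism $\gamma: M \otimes_R M_3 \to (M \otimes_R M_2)/N$, and a routine verification on simple tensors shows $\pi \circ \gamma = \id$ and $\gamma \circ \pi = \id$. The main obstacle in such an argument is purely bookkeeping; in practice the lemma would simply be invoked as \cite[Proposition 19.13]{AF} without elaboration, since the paper only needs this as a black-box tool to transfer short exact sequences of $\Zb\Gamma$-modules into (right) exact sequences of $\cN\Gamma$-modules.
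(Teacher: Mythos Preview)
Your argument is correct and is the standard direct proof of right-exactness of the tensor functor. Note, however, that the paper does not supply any proof of this lemma at all: it simply records the statement with a citation to \cite[Proposition 19.13]{AF} and uses it as a black box, exactly as you anticipate in your final sentence. So there is nothing to compare against; your write-up is strictly more detailed than what appears in the paper.
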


Let $\cM$ be a finitely presented (left) $\Zb\Gamma$-module. Say, $\cM=(\Zb\Gamma)^{1\times n}/(\Zb\Gamma)^{1\times m}f$ for some $n, m\in \Nb$ and $f\in M_{m, n}(\Zb\Gamma)$.
Denote by $\ker f$ the kernel of the bounded linear operator $(\ell^2(\Gamma))^{n\times 1}\rightarrow (\ell^2(\Gamma))^{m\times 1}$ sending $z$ to $fz$, and by
$P_f$ the orthogonal projection from $(\ell^2(\Gamma))^{n\times 1}$ onto $\ker f$. Note that $\ker f$ is invariant under the direct sum of the right regular representation of $\Gamma$ on $(\ell^2(\Gamma))^{n\times 1}$. Thus $P_f$ commutes with this representation, and hence $P_f\in M_n(\cN\Gamma)$.

For any subset $K$ of $\Gamma$, we denote by $\Cb[K]$ the subspace of $\ell^2(\Gamma)$ and $\ell^\infty(\Gamma)$ consisting of elements vanishing on $\Gamma\setminus K$. Similarly, we have $\Rb[K]$, $\Qb[K]$ and $\Zb[K]$.

We need the following result of Elek \cite{Elek}:

\begin{lemma} \label{L-Elek}
One has
$$ \tr_{\cN\Gamma}P_f=\lim_F\frac{\dim_{\Cb}(\ker f\cap (\Cb[F])^{n\times 1})}{|F|}.$$
\end{lemma}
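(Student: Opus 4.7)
My plan is to prove the identity via a spectral approximation for the positive operator $T := f^*f \in M_n(\Zb\Gamma) \subseteq M_n(\cN\Gamma)$, combined with an integrality argument that controls the atom at $0$. Let $Q_F$ denote the orthogonal projection from $(\ell^2(\Gamma))^{n\times 1}$ onto $L_F := (\Cb[F])^{n\times 1}$, and put $T_F := Q_F T Q_F |_{L_F}$. Since $P_f$ is the spectral projection $\chi_{\{0\}}(f^*f)$, writing $\mu_T(B) := n^{-1}\tr_{\cN\Gamma}(\chi_B(T))$ for the normalized spectral measure of $T$ and $\mu_F$ for the empirical eigenvalue distribution of $T_F$ (viewed as a probability measure on $[0, \|T\|]$), the lemma reduces to the pointwise convergence $\mu_F(\{0\}) \to \mu_T(\{0\})$ as $F$ becomes sufficiently invariant.

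First I will establish weak convergence $\mu_F \to \mu_T$ by moment matching. The identity $\Tr(Q_F T^k Q_F) = |F|\cdot \tr_{\cN\Gamma}(T^k)$ is exact, by translation invariance of $T^k$ under the right regular representation. On the other hand, since $T$ has finite support in $\Gamma$ (contained in some $K_0^{-1}K_0$), the operators $(Q_F T Q_F)^k$ and $Q_F T^k Q_F$ agree on every basis vector $s\otimes \delta_j$ with $(K_0^{-1}K_0)^k s \subseteq F$, so $|\Tr((Q_F T Q_F)^k) - \Tr(Q_F T^k Q_F)| \le C_k|F \setminus F_k|$ where $F_k$ is this interior and $C_k$ depends only on $k$, $n$, and $\|T\|$; the Følner property makes this $o(|F|)$. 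Hence $(n|F|)^{-1}\Tr(p(T_F)) \to n^{-1}\tr_{\cN\Gamma}(p(T))$ for every polynomial $p$, and therefore $\mu_F \to \mu_T$ weakly on $[0, \|T\|]$. An analogous Følner estimate compares $\dim \ker T_F$ with $\dim_\Cb(\ker f \cap L_F)$: every $x \in L_F$ satisfies $Tx \in (\Cb[K_0^{-1}K_0 F])^{n\times 1}$, so $Q_F T x = 0$ forces $Tx$ to be supported in $(K_0^{-1}K_0 F)\setminus F$, and $\ker T_F \supseteq \ker f \cap L_F$ with codimension at most $n|K_0^{-1}K_0 F\setminus F| = o(|F|)$.

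The decisive step is the lower bound $\liminf_F \mu_F(\{0\}) \ge \mu_T(\{0\})$; weak convergence alone yields only the upper bound $\limsup_F \mu_F(\{0\}) \le \mu_T(\{0\})$ on the closed atom. I will extract a uniform spectral gap from integrality. In the orthonormal basis $\{s\otimes \delta_j : s\in F,\, 1\le j\le n\}$ of $L_F$, the matrix of $T_F$ has entries in $\Zb$ because $T \in M_n(\Zb\Gamma)$, so $\det(\lambda I - T_F) = \lambda^{\dim \ker T_F}\, p_F(\lambda)$ with $p_F \in \Zb[\lambda]$ and $p_F(0) \ne 0$. Hence $|p_F(0)| = \prod_{\lambda_i > 0}\lambda_i \ge 1$, which translates into $\int_{(0, \infty)}\log\lambda\, d\mu_F(\lambda) \ge 0$. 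Splitting this integral at $\epsilon$ and bounding the tail on $(\epsilon, \|T\|]$ trivially yields the uniform estimate
$$\mu_F\bigl((0, \epsilon]\bigr) \le \frac{\log\max(1, \|T\|)}{\log(1/\epsilon)} \qquad (0 < \epsilon < 1).$$
Choosing a continuous bump $\phi$ on $[0, \|T\|]$ with $\phi(0) = 1$, $0 \le \phi \le 1$, and $\phi = 0$ on $[\epsilon, \|T\|]$, weak convergence gives $\lim_F \int \phi\, d\mu_F = \int \phi\, d\mu_T \ge \mu_T(\{0\})$, so
$$\liminf_F \mu_F(\{0\}) \ge \lim_F \int \phi\, d\mu_F - \limsup_F \mu_F((0, \epsilon]) \ge \mu_T(\{0\}) - \frac{\log\max(1, \|T\|)}{\log(1/\epsilon)},$$
and letting $\epsilon \to 0$ finishes the proof.

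The main obstacle is precisely this lower bound: weak convergence of spectral distributions is too coarse to detect atoms by itself, and it is the integrality of $T_F$, inherited from $T \in M_n(\Zb\Gamma)$, that supplies the uniform spectral gap just to the right of $0$ needed to promote weak convergence into pointwise convergence of the atomic mass at $\{0\}$.
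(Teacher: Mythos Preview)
Your proof is correct. The paper does not actually prove this lemma; it simply cites it as a result of Elek \cite{Elek}. What you have written is essentially Elek's original argument, which in turn is an amenable-groups adaptation of L\"uck's approximation theorem: weak convergence of the empirical spectral measures of the finite truncations $T_F = Q_F f^*f Q_F$ to the spectral measure of $T = f^*f$ via moment matching, together with the log-determinant/integrality trick (the nonzero part of the characteristic polynomial of $T_F$ has integer constant term, hence $\prod_{\lambda_i>0}\lambda_i\ge 1$) to rule out mass escaping into $(0,\epsilon]$ and thereby upgrade weak convergence to convergence of the atom at $0$. Your F{\o}lner comparisons between $\Tr((Q_FTQ_F)^k)$ and $\Tr(Q_FT^kQ_F)$, and between $\dim\ker T_F$ and $\dim_\Cb(\ker f\cap L_F)$, are exactly the boundary estimates one needs, and the Portmanteau upper bound on closed sets supplies the reverse inequality. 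So you have faithfully reconstructed the cited result rather than found an alternative route.
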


For any bounded linear operator $T:(\ell^2(\Gamma))^{n\times 1}\rightarrow (\ell^2(\Gamma))^{m\times 1}$ one has the {\it polar decomposition} as follows: there exist unique bounded linear operators $U: (\ell^2(\Gamma))^{n\times 1}\rightarrow (\ell^2(\Gamma))^{m\times 1})$ and $S:(\ell^2(\Gamma))^{n\times 1}\rightarrow
(\ell^2(\Gamma))^{n\times 1}$ satisfying that $\left<Sx, x\right>\ge 0$ for all $x\in (\ell^2(\Gamma))^{n\times 1}$, $\ker U=\ker S=\ker T$,  $U$ is an isometry from the orthogonal complement of $\ker T$ onto the closure of $\im T$,
and $T=US$ \cite[Theorem 6.1.2]{KR2}. When $T\in M_{m, n}(\cN\Gamma)$, since $T$ is fixed under the adjoint action of $\Gamma$ on the space of all bounded linear operators $(\ell^2(\Gamma))^{n\times 1}\rightarrow (\ell^2(\Gamma))^{m\times 1}$ via the direct sums of the right regular representation of $\Gamma$ on $(\ell^2(\Gamma))^{n\times 1}$ and $(\ell^2(\Gamma))^{m\times 1}$, both $U$ and $S$ are also fixed under the adjoint actions of $\Gamma$, and hence $U\in M_{m, n}(\cN\Gamma)$ and $S\in M_n(\cN\Gamma)$.

\begin{lemma} \label{L-ker and dim}
For any discrete (not necessarily amenable) group $\Gamma$,
one has
$$\tr_{\cN\Gamma}P_f=\vrank(\cM).$$
\end{lemma}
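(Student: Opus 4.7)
The plan is to translate the definition of $\vrank(\cM)$ into a concrete dimension computation using the presentation of $\cM$, and then to identify that dimension with $\tr_{\cN\Gamma}P_f$ via the polar decomposition of $f$ inside $M_{m,n}(\cN\Gamma)$.

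First I would apply the right exactness of $\cN\Gamma\otimes_{\Zb\Gamma}(-)$ (Lemma~\ref{L-right exact}) to the free presentation $(\Zb\Gamma)^{1\times m}\xrightarrow{\,\cdot f\,}(\Zb\Gamma)^{1\times n}\to\cM\to 0$ to identify $\cN\Gamma\otimes_{\Zb\Gamma}\cM$ with $(\cN\Gamma)^{1\times n}/(\cN\Gamma)^{1\times m}f$. Then by additivity of $\dim_{\cN\Gamma}$ on short exact sequences (Theorem~\ref{T-dim basic}(1)) together with the normalization $\dim_{\cN\Gamma}(\cN\Gamma)^{1\times n}=n$, the desired equality reduces to
$$\dim_{\cN\Gamma}\bigl((\cN\Gamma)^{1\times m}f\bigr)=n-\tr_{\cN\Gamma}P_f.$$

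Next I would invoke the polar decomposition $f=US$, where $S=(f^*f)^{1/2}\in M_n(\cN\Gamma)$ is positive and $U\in M_{m,n}(\cN\Gamma)$ is a partial isometry with $\ker U=\ker f$ and $U^*U=I-P_f$. Since $fP_f=0$, we have $f=f(I-P_f)$, so the algebraic image satisfies
$$(\cN\Gamma)^{1\times m}f\subseteq(\cN\Gamma)^{1\times n}(I-P_f).$$
The right-hand side is the finitely generated projective module determined by the idempotent $I-P_f\in M_n(\cN\Gamma)$, and by Theorem~\ref{T-dim basic}(3) it has von Neumann dimension $\tr_{\cN\Gamma}(I-P_f)=n-\tr_{\cN\Gamma}P_f$. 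Moreover, using $U^*U=I-P_f$ one checks directly that $(\cN\Gamma)^{1\times m}U=(\cN\Gamma)^{1\times n}(I-P_f)$, so it remains only to compare $(\cN\Gamma)^{1\times m}f$ with $(\cN\Gamma)^{1\times m}U$ at the level of dimension.

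The main obstacle is that $U$ and $I-P_f$ generally do not lie in the algebraic image $(\cN\Gamma)^{1\times m}f$; the polar decomposition is only a spectral, not an algebraic, tool. I would overcome this by spectral approximation: for $\varepsilon>0$, let $E_\varepsilon=\mathbf{1}_{[\varepsilon,\infty)}(S)\in M_n(\cN\Gamma)$ and $g_\varepsilon(S)=S^{-1}\mathbf{1}_{[\varepsilon,\infty)}(S)\in M_n(\cN\Gamma)$, so that $Sg_\varepsilon(S)=E_\varepsilon$. For any $x\in(\cN\Gamma)^{1\times n}$, the element $y:=xE_\varepsilon g_\varepsilon(S)U^*\in(\cN\Gamma)^{1\times m}$ satisfies $yf=yUS=xE_\varepsilon g_\varepsilon(S)(I-P_f)S=xE_\varepsilon$, showing $(\cN\Gamma)^{1\times n}E_\varepsilon\subseteq(\cN\Gamma)^{1\times m}f$. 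Since $E_\varepsilon\nearrow I-P_f$ in the strong operator topology as $\varepsilon\downarrow 0$, the continuity of $\dim_{\cN\Gamma}$ under increasing unions (Theorem~\ref{T-dim basic}(2)) gives
$$n-\tr_{\cN\Gamma}P_f=\sup_{\varepsilon>0}\tr_{\cN\Gamma}E_\varepsilon\le\dim_{\cN\Gamma}\bigl((\cN\Gamma)^{1\times m}f\bigr)\le\dim_{\cN\Gamma}\bigl((\cN\Gamma)^{1\times n}(I-P_f)\bigr)=n-\tr_{\cN\Gamma}P_f,$$
completing the proof. Note that amenability of $\Gamma$ is nowhere used, consistent with the statement of the lemma.
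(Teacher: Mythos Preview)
Your argument is correct and gives a valid alternative to the paper's proof. Both proofs start the same way---apply right exactness to the presentation and invoke the polar decomposition $f=US$---but then analyze different pieces of the resulting exact sequence. The paper computes the dimension of the \emph{kernel} $\fM=\{x\in(\cN\Gamma)^{1\times m}:xf=0\}$ and shows by a short algebraic argument that $\fM=(\cN\Gamma)^{1\times m}(I_m-UU^*)$ exactly: from $xUS=0$ one gets $xU\,\overline{S(\ell^2(\Gamma))^{n\times 1}}=xUU^*U(\ell^2(\Gamma))^{n\times 1}=xU(\ell^2(\Gamma))^{n\times 1}=0$, so $xU=0$ and $x=x(I_m-UU^*)$. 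This yields $\dim_{\cN\Gamma}\fM=m-\tr_{\cN\Gamma}(UU^*)$ immediately, and the tracial identity $\tr_{\cN\Gamma}(UU^*)=\tr_{\cN\Gamma}(U^*U)$ finishes. You instead compute the dimension of the \emph{image} $(\cN\Gamma)^{1\times m}f$, which (as you rightly observe) need not equal $(\cN\Gamma)^{1\times n}(I_n-P_f)$ as modules, forcing the spectral approximation via $E_\varepsilon$. This works, but note that the key equality $\sup_{\varepsilon>0}\tr_{\cN\Gamma}E_\varepsilon=\tr_{\cN\Gamma}(I_n-P_f)$ is really the normality of the canonical trace on $\cN\Gamma$ rather than Theorem~\ref{T-dim basic}(2); that fact is standard, so your proof stands, but the paper's kernel approach avoids this analytic step and stays entirely within the module-theoretic framework of Theorem~\ref{T-dim basic}.
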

\begin{proof}
From the exact sequence
$$ (\Zb\Gamma)^{1\times m}\overset{\cdot f}\rightarrow (\Zb\Gamma)^{1\times n}\rightarrow \cM\rightarrow 0$$
of left $\Zb\Gamma$-modules, by Lemma~\ref{L-right exact}
we have the exact sequence
$$0\rightarrow \fM\rightarrow (\cN\Gamma)^{1\times m}\overset{\cdot f}\rightarrow (\cN\Gamma)^{1\times n}\rightarrow \cN\Gamma\otimes_{\Zb\Gamma}\cM\rightarrow 0$$
of left $\cN\Gamma$-modules, where $\fM:=\{x\in (\cN\Gamma)^{1\times m}: xf=0\}$.
By Theorem~\ref{T-dim basic}  we get
$$ \dim_{\cN\Gamma}(\cN\Gamma\otimes_{\Zb\Gamma}\cM)+m=\dim_{\cN\Gamma}\fM+n.$$

Let $f=US$ be the polar decomposition of $f$.
Denote by $I_m$ the identify matrix in $M_m(\cN\Gamma)$.
We claim that $\fM=(\cN\Gamma)^{1\times m}(I_m-UU^*)$. Since $(I_m-UU^*)U=0$, we have
$$   (\cN\Gamma)^{1\times m}(I_m-UU^*)f=(\cN\Gamma)^{1\times m}(I_m-UU^*)US=\{0\},$$
and hence $\fM\supseteq (\cN\Gamma)^{1\times m}(I_m-UU^*)$. Let $x\in \fM$. Then $xUS=xf=0$. Thus $xUS(\ell^2(\Gamma))^{n\times 1}=\{0\}$, and hence $$xU(\ell^2(\Gamma))^{n\times 1}=xUU^*U(\ell^2(\Gamma))^{n\times 1}=xU\overline{S(\ell^2(\Gamma))^{n\times 1}}=\{0\}.$$
That is, $xU=0$. Therefore $x=x(I_m-UU^*)\in (\cN\Gamma)^{1\times m}(I_m-UU^*)$. This proves our claim.

Note that $I_m-UU^*$ is the orthogonal projection from $(\ell^2(\Gamma))^{m\times 1}$ onto the orthogonal complement of $\im U$.
Thus $I_m-UU^*$ is
an idempotent in $M_m(\cN\Gamma)$. By Theorem~\ref{T-dim basic}
we have
$$ \dim_{\cN\Gamma}\fM=\tr_{\cN\Gamma}(I_m-UU^*)=m-\tr_{\cN\Gamma}(U^*U).$$
Note that $P_f=I_n-U^*U$. Thus
\begin{align*}
\vrank(\cM)&=\dim_{\cN\Gamma}(\cN\Gamma\otimes_{\Zb\Gamma}\cM)\\
&=\dim_{\cN\Gamma}\fM+n-m\\
&=n-\tr_{\cN\Gamma}(U^*U)\\
&=\tr_{\cN\Gamma}(I_n-U^*U)\\
&=\tr_{\cN\Gamma}P_f.
\end{align*}
\end{proof}

Now we prove Theorem~\ref{T-mrank vs vrank} for finitely presented $\Zb\Gamma$-modules.

\begin{lemma} \label{L-mrank vs vrank}
For any finitely presented $\Zb\Gamma$-module $\cM$, one has
$$\mrank(\cM)=\vrank(\cM).$$
\end{lemma}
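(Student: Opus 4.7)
Since $\cM$ is finitely presented, write $\cM=(\Zb\Gamma)^{1\times n}/(\Zb\Gamma)^{1\times m}f$ for some $f\in M_{m,n}(\Zb\Gamma)$. Lemma~\ref{L-ker and dim} already gives $\vrank(\cM)=\tr_{\cN\Gamma}P_f$, so the plan is to show $\mrank(\cM)=\tr_{\cN\Gamma}P_f$. Taking $A=\{\pi(e_1),\dots,\pi(e_n)\}$ to be the images of the standard row vectors in $\cM$, Lemma~\ref{L-finitely generated} identifies $\mrank(\cM)$ with $\lim_F|F|^{-1}\rank(\left<F^{-1}A\right>)$. Since $\Cb$ is flat over $\Zb$ and $\left<F^{-1}A\right>$ is the image of $(\Zb[F^{-1}])^{1\times n}$ in $\cM$, tensoring with $\Cb$ and applying Lemma~\ref{L-right exact} give
\[
\rank(\left<F^{-1}A\right>)\;=\;n|F|-\dim_\Cb\bigl((\Cb[F^{-1}])^{1\times n}\cap(\Cb\Gamma)^{1\times m}f\bigr).
\]

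Next, I would transport this intersection across the involution $s\mapsto s^{-1}$ of $\Cb\Gamma$, extended coordinatewise to a $\Cb$-linear isomorphism $(\Cb\Gamma)^{1\times n}\to(\Cb\Gamma)^{n\times 1}$; the identity $(xf)^*=f^*x^*$ shows that $(\Cb\Gamma)^{1\times m}f$ is carried onto $f^*(\Cb\Gamma)^{m\times 1}$. Setting $V_F:=(\Cb[F])^{n\times 1}$ and $W:=f^*(\Cb\Gamma)^{m\times 1}$, this reduces the theorem to proving
\[
\lim_F|F|^{-1}\dim_\Cb(V_F\cap W)\;=\;n-\tr_{\cN\Gamma}P_f.
\]

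The inequality $\limsup\le n-\tr P_f$ is the easy half. Because $W\subseteq\overline{\im f^*}=(\ker f)^\perp$ in $\ell^2(\Gamma)^{n\times 1}$, the subspaces $V_F\cap W$ and $V_F\cap\ker f$ are $\ell^2$-orthogonal inside $V_F$, so their dimensions sum to at most $n|F|$. Lemma~\ref{L-Elek} identifies $\lim_F|F|^{-1}\dim_\Cb(V_F\cap\ker f)$ with $\tr_{\cN\Gamma}P_f$, and the upper bound follows.

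The matching $\liminf\ge n-\tr P_f$ is the main obstacle, since $W$ is only dense in $(\ker f)^\perp$ and one must exhibit enough vectors in $W$ truly supported in $F$. To produce them, I would let $T\subseteq\Gamma$ be the union of the supports of the entries of $f^*$ together with $\{e\}$, and set $F_-:=\bigcap_{t\in T}t^{-1}F$. A short F\o{}lner estimate gives $F_-\subseteq F$ with $|F_-|/|F|\to 1$, and the support inclusion $TF_-\subseteq F$ yields $f^*(\Cb[F_-])^{m\times 1}\subseteq V_F\cap W$. Since the algebraic and $\ell^2$-kernels of $f^*$ coincide on finitely supported vectors, $\dim_\Cb f^*(\Cb[F_-])^{m\times 1}\ge m|F_-|-\dim_\Cb(\ker f^*\cap(\Cb[F])^{m\times 1})$. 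Applying Lemma~\ref{L-Elek} to $f^*\in M_{n,m}(\Zb\Gamma)$ and reading off $\tr_{\cN\Gamma}P_{f^*}=m-\tr(UU^*)=m-\tr(U^*U)=m-n+\tr_{\cN\Gamma}P_f$ from the polar decomposition $f=US$ used in the proof of Lemma~\ref{L-ker and dim}, this gives $\liminf_F|F|^{-1}\dim_\Cb(V_F\cap W)\ge m-\tr P_{f^*}=n-\tr P_f$, completing the proof.
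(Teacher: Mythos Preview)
Your proof is correct, but it is organized differently from the paper's. The paper does not compute $\mrank(\cM)$ directly from the generators of $\cM$; instead it computes $\mrank\bigl((\Zb\Gamma)^{1\times m}f\bigr)$ using the rows of $f$ as generators, obtains the clean exact sequence
\[
0\longrightarrow \ker f^*\cap(\Zb[F])^{m\times 1}\longrightarrow (\Zb[F])^{m\times 1}\overset{f^*}{\longrightarrow}\left<A^*F\right>\longrightarrow 0,
\]
reads off $\mrank\bigl((\Zb\Gamma)^{1\times m}f\bigr)=m-\tr_{\cN\Gamma}P_{f^*}=n-\tr_{\cN\Gamma}P_f$ from Lemma~\ref{L-Elek} and the polar decomposition, and then invokes the addition formula of Lemma~\ref{L-addition2} to conclude $\mrank(\cM)=n-\mrank\bigl((\Zb\Gamma)^{1\times m}f\bigr)=\tr_{\cN\Gamma}P_f$. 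Your argument bypasses Lemma~\ref{L-addition2} entirely: working with the quotient generators forces you to analyze the intersection $V_F\cap f^*(\Cb\Gamma)^{m\times 1}$, for which no exact formula at finite $F$ is available, and you compensate with a two-sided estimate. Your lower bound, via $f^*(\Cb[F_-])^{m\times 1}\subseteq V_F\cap W$ and a F{\o}lner boundary estimate, is effectively the same computation the paper does for $\mrank\bigl((\Zb\Gamma)^{1\times m}f\bigr)$; your upper bound, using $W\subseteq(\ker f)^\perp$ so that $V_F\cap W$ and $V_F\cap\ker f$ are orthogonal in $V_F$, is a genuinely different ingredient that replaces the appeal to Lemma~\ref{L-addition2}. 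The paper's route is slightly shorter because Lemma~\ref{L-addition2} already absorbs the boundary argument, while yours is more self-contained at this stage.
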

\begin{proof} Say, $\cM=(\Zb\Gamma)^{1\times n}/(\Zb\Gamma)^{1\times m}f$ for some $n, m\in \Nb$ and $f\in M_{m, n}(\Zb\Gamma)$.

Let $F\in \cF(\Gamma)$. Since $f^*$ has integral coefficients, we have
$$ \rank(\ker f^*\cap (\Zb[F])^{m\times 1})=\dim_\Qb(\ker f^*\cap (\Qb[F])^{m\times 1})=\dim_\Cb(\ker f^*\cap (\Cb[F])^{m\times 1}).$$

Denote by $A$ the set of all rows of $f$. Then  $(\Zb\Gamma)^{1\times m}f$ is the $\Zb\Gamma$-submodule of $(\Zb\Gamma)^{1\times n}$ generated by $A$.
Note that we have  a short exact sequence
$$ 0\rightarrow \ker f^*\cap (\Zb[F])^{m\times 1}\rightarrow (\Zb[F])^{m\times 1}\overset{f^*\cdot}\rightarrow \left<A^*F\right>\rightarrow 0$$
of abelian groups.
Then
\begin{align*}
\mrank((\Zb\Gamma)^{1\times m}f)&\overset{Lemma~\ref{L-finitely generated}}=\lim_F\frac{\rank(\left<F^{-1}A\right>)}{|F|}\\
&=\lim_F\frac{\rank(\left<A^*F\right>)}{|F|}\\
&\overset{Lemma~\ref{L-rank}}=\lim_F\frac{\rank((\Zb[F])^{m\times 1})-\rank(\ker f^*\cap (\Zb[F])^{m\times 1})}{|F|}\\
&=\lim_F\frac{m|F|-\dim_\Cb(\ker f^*\cap (\Cb[F])^{m\times 1})}{|F|}\\
&\overset{Lemma~\ref{L-Elek}}=m-\tr_{\cN\Gamma}P_{f^*}.
\end{align*}

Let $f=US$ be the polar decomposition of $f$. Then
\begin{align*}
 m-\tr_{\cN\Gamma}P_{f^*}&=\tr_{\cN\Gamma}(I_m-P_{f^*})=\tr_{\cN\Gamma}(UU^*)=\tr_{\cN\Gamma}(U^*U)\\
 &=\tr_{\cN\Gamma}(I_n-P_f)=n-\tr_{\cN\Gamma}P_f,
\end{align*}
where the third equality follows from the tracial property \eqref{E-trace} of $\tr_{\cN\Gamma}$.

Taking $\cM_2=(\Zb\Gamma)^{1\times n}$ and $\cM_1=(\Zb\Gamma)^{1\times m}f$ in  Lemma~\ref{L-addition2} we get
\begin{align*}
\mrank(\cM)&=\mrank((\Zb\Gamma)^{1\times n})-\mrank((\Zb\Gamma)^{1\times m}f)\\
&=n-(m-\tr_{\cN\Gamma}P_{f^*})\\
&=n-(n-\tr_{\cN\Gamma}P_f)\\
&=\tr_{\cN\Gamma}P_f\\
&\overset{Lemma~\ref{L-ker and dim}}=\vrank(\cM).
\end{align*}
\end{proof}

\subsection{General case} \label{SS-general case}

The next lemma is the analogue of Lemma~\ref{L-decreasing limit mrank} for von Neumann-L\"{u}ck rank.
Lemmas~\ref{L-decreasing limit mrank} and \ref{L-decreasing limit vrank} together will enable us to pass from finitely presented modules to finitely generated modules in the proof of Theorem~\ref{T-mrank vs vrank}.

\begin{lemma} \label{L-decreasing limit vrank}
Let $\Gamma$ be a discrete (not necessarily amenable) group. Let $\cM$ be a finitely generated $\Zb\Gamma$-module. Let $\{\cM_n\}_{n\in J}$ be an increasing net of submodules of $\cM$ with $\infty\not\in J$. Set $\cM_\infty=\bigcup_{n\in J}\cM_n$. Then
$$\vrank(\cM/\cM_\infty)=\lim_{n\to\infty}\vrank(\cM/\cM_n)=\inf_{n\in J} \vrank(\cM/\cM_n).$$
\end{lemma}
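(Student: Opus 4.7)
The plan is to reduce the statement to the continuity and additivity properties of von Neumann--L\"{u}ck dimension recorded in Theorem~\ref{T-dim basic}, after transferring the setup to the category of $\cN\Gamma$-modules via the tensor product $\cN\Gamma\otimes_{\Zb\Gamma}-$.

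First I would fix notation. For each $n\in J\cup\{\infty\}$, applying $\cN\Gamma\otimes_{\Zb\Gamma}-$ to the surjection $\cM\twoheadrightarrow \cM/\cM_n$ gives, by Lemma~\ref{L-right exact}, a surjection $\cN\Gamma\otimes_{\Zb\Gamma}\cM\twoheadrightarrow\cN\Gamma\otimes_{\Zb\Gamma}(\cM/\cM_n)$ whose kernel is the image $\fN_n$ of the natural map $\cN\Gamma\otimes_{\Zb\Gamma}\cM_n\to\cN\Gamma\otimes_{\Zb\Gamma}\cM$. Since $\cM$ is finitely generated, $\cN\Gamma\otimes_{\Zb\Gamma}\cM$ is a finitely generated $\cN\Gamma$-module, hence a quotient of some $(\cN\Gamma)^{1\times k}$, so by Theorem~\ref{T-dim basic}(1),(3) it has finite von Neumann--L\"{u}ck dimension; in particular all $\fN_n$ and the quotients have finite dimension, and from Theorem~\ref{T-dim basic}(1) applied to $0\to \fN_n\to \cN\Gamma\otimes_{\Zb\Gamma}\cM\to\cN\Gamma\otimes_{\Zb\Gamma}(\cM/\cM_n)\to 0$ we obtain
\begin{equation*}
\vrank(\cM/\cM_n)=\dim_{\cN\Gamma}(\cN\Gamma\otimes_{\Zb\Gamma}\cM)-\dim_{\cN\Gamma}\fN_n,
\end{equation*}
and the analogous identity for $n=\infty$.

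Next I would verify the key compatibility $\fN_\infty=\bigcup_{n\in J}\fN_n$. Clearly $\fN_n\subseteq\fN_\infty$ for each $n$. Conversely any element of $\cN\Gamma\otimes_{\Zb\Gamma}\cM_\infty$ is a finite sum $\sum_{i}a_i\otimes m_i$ with $m_i\in\cM_\infty=\bigcup_{n\in J}\cM_n$; by the directedness of the net there is a single $n\in J$ with $m_i\in \cM_n$ for every $i$, so its image in $\cN\Gamma\otimes_{\Zb\Gamma}\cM$ already lies in $\fN_n$. Thus $\{\fN_n\}_{n\in J}$ is an increasing net of $\cN\Gamma$-submodules of $\cN\Gamma\otimes_{\Zb\Gamma}\cM$ with union $\fN_\infty$.

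Finally I would invoke Theorem~\ref{T-dim basic}(2) to get $\dim_{\cN\Gamma}\fN_\infty=\sup_{n\in J}\dim_{\cN\Gamma}\fN_n$, and substitute into the dimension formula from the first step to conclude
\begin{equation*}
\vrank(\cM/\cM_\infty)=\dim_{\cN\Gamma}(\cN\Gamma\otimes_{\Zb\Gamma}\cM)-\sup_{n\in J}\dim_{\cN\Gamma}\fN_n=\inf_{n\in J}\vrank(\cM/\cM_n),
\end{equation*}
which equals $\lim_{n\to\infty}\vrank(\cM/\cM_n)$ since $n\mapsto\vrank(\cM/\cM_n)$ is monotone decreasing by the additivity of $\dim_{\cN\Gamma}$. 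No step here is a serious obstacle; the only point that needs care is the commutativity of the tensor product with the directed union, which is transparent once one works out elements, and the use of finite generation of $\cM$ to keep the relevant $\cN\Gamma$-dimensions finite so that subtraction is legitimate. Note that, unlike the mean-rank version, amenability of $\Gamma$ is never used.
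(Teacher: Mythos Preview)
Your proof is correct and follows essentially the same approach as the paper: both transfer to $\cN\Gamma$-modules via $\cN\Gamma\otimes_{\Zb\Gamma}-$, identify the kernel of $\cN\Gamma\otimes_{\Zb\Gamma}\cM\to\cN\Gamma\otimes_{\Zb\Gamma}(\cM/\cM_n)$ with the image $\fN_n$ of $\cN\Gamma\otimes_{\Zb\Gamma}\cM_n$, verify $\fN_\infty=\bigcup_n\fN_n$, and then combine the additivity and continuity properties of $\dim_{\cN\Gamma}$ from Theorem~\ref{T-dim basic} together with the finiteness of $\dim_{\cN\Gamma}(\cN\Gamma\otimes_{\Zb\Gamma}\cM)$ to conclude. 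The only cosmetic difference is that the paper first establishes monotonicity of $n\mapsto\vrank(\cM/\cM_n)$ via a separate short exact sequence before carrying out the main computation, whereas you deduce monotonicity at the end from your subtraction formula.
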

\begin{proof} For any $n<m$ in $J$, we have the exact sequence
$$ 0\rightarrow \cM_m/\cM_n\rightarrow \cM/\cM_n\rightarrow \cM/\cM_m\rightarrow 0$$
of $\Zb\Gamma$-modules. By Lemma~\ref{L-right exact}
we obtain  the exact sequence
$$ \cN\Gamma\otimes_{\Zb\Gamma}(\cM_m/\cM_n)\rightarrow \cN\Gamma\otimes_{\Zb\Gamma}(\cM/\cM_n)\rightarrow \cN\Gamma\otimes_{\Zb\Gamma}(\cM/\cM_m)\rightarrow 0$$
of $\cN\Gamma$-modules. By Theorem~\ref{T-dim basic},
we have
$$\vrank(\cM/\cM_n)=\dim_{\cN\Gamma}( \cN\Gamma\otimes_{\Zb\Gamma}(\cM/\cM_n))\ge \dim_{\cN\Gamma}( \cN\Gamma\otimes_{\Zb\Gamma}(\cM/\cM_m))=\vrank(\cM/\cM_m).$$
Thus
$$\lim_{n\to\infty}\vrank(\cM/\cM_n)=\inf_{n\in J} \vrank(\cM/\cM_n).$$

For each $n\in J\cup \{\infty\}$, denote by $\pi_n$ the surjective homomorphism $ \cN\Gamma\otimes_{\Zb\Gamma}\cM\rightarrow \cN\Gamma\otimes_{\Zb\Gamma}(\cM/\cM_n)$.
From  the exact sequence
$$0\rightarrow \cM_n\rightarrow \cM\rightarrow \cM/\cM_n\rightarrow 0$$
of $\Zb\Gamma$-modules, by Lemma~\ref{L-right exact}
 we obtain  the exact sequence
$$ \cN\Gamma\otimes_{\Zb\Gamma}\cM_n\rightarrow \cN\Gamma\otimes_{\Zb\Gamma}\cM\rightarrow \cN\Gamma\otimes_{\Zb\Gamma}(\cM/\cM_n)\rightarrow 0$$
of $\cN\Gamma$-modules for each $n\in J\cup \{\infty\}$. Thus $\ker \pi_n$ is equal to the image of the homomorphism $\cN\Gamma\otimes_{\Zb\Gamma}\cM_n\rightarrow \cN\Gamma\otimes_{\Zb\Gamma}\cM$. It follows that $\ker \pi_\infty=\bigcup_{n\in J}\ker \pi_n$. By Theorem~\ref{T-dim basic}, we have
\begin{align*} \dim_{\cN\Gamma}(\cN\Gamma\otimes_{\Zb\Gamma}\cM)&=\dim_{\cN\Gamma}(\ker \pi_n)+\dim_{\cN\Gamma}(\cN\Gamma\otimes_{\Zb\Gamma}(\cM/\cM_n))\\
&=\dim_{\cN\Gamma}(\ker \pi_n)+\vrank(\cM/\cM_n)
\end{align*}
for all $n\in J\cup \{\infty\}$,
and
$$ \dim_{\cN\Gamma}(\ker \pi_\infty)=\lim_{n\to \infty}\dim_{\cN\Gamma}(\ker \pi_n).$$
It follows that
$$\dim_{\cN\Gamma}(\ker \pi_\infty)+\vrank(\cM/\cM_\infty)=\dim_{\cN\Gamma}(\ker \pi_\infty)+\lim_{n\to \infty}\vrank(\cM/\cM_n).$$
Since $\cM$ is a finitely generated $\Zb\Gamma$-module,
$\cN\Gamma\otimes_{\Zb\Gamma}\cM$ is a finitely generated $\cN\Gamma$-module. By Theorem~\ref{T-dim basic} we have
$\dim_{\cN\Gamma}(\ker \pi_\infty)\le \dim_{\cN\Gamma}(\cN\Gamma\otimes_{\Zb\Gamma}\cM)<+\infty$. Therefore
$$\vrank(\cM/\cM_\infty)=\lim_{n\to \infty}\vrank(\cM/\cM_n).$$
\end{proof}

The following lemma is the analogue of Theorem~\ref{T-addition mrank} for von Neumann-L\"{u}ck rank.

\begin{lemma} \label{L-addition vrank}
For any short exact sequence
$$0\rightarrow \cM_1\rightarrow \cM_2\rightarrow \cM_3\rightarrow 0$$
of $\Zb\Gamma$-modules, one has
$$ \vrank(\cM_2)=\vrank(\cM_1)+\vrank(\cM_3).$$
\end{lemma}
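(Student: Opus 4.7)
The plan is to reduce the addition formula for $\vrank$ to the addition formula for $\mrank$ (Theorem~\ref{T-addition mrank}), via the equality $\mrank=\vrank$ that is already known for finitely presented modules (Lemma~\ref{L-mrank vs vrank}). The bridge between the two will be a bootstrap that first extends this equality from finitely presented to finitely generated $\Zb\Gamma$-modules, and then lifts the addition formula from the finitely generated case to the general case using the continuity of $\dim_{\cN\Gamma}$ (Theorem~\ref{T-dim basic}) together with Lemma~\ref{L-decreasing limit vrank}.

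To begin, I would apply the right-exact functor $\cN\Gamma\otimes_{\Zb\Gamma}-$ (Lemma~\ref{L-right exact}) to the short exact sequence and obtain an exact sequence $\fM_1\xrightarrow{\alpha}\fM_2\to\fM_3\to 0$ of $\cN\Gamma$-modules, where $\fM_j:=\cN\Gamma\otimes_{\Zb\Gamma}\cM_j$. Splitting this via the two short exact sequences with middle term $\operatorname{im}\alpha$ and applying Theorem~\ref{T-dim basic}(1) twice yields
$$\vrank(\cM_2)=\vrank(\cM_1)+\vrank(\cM_3)-\dim_{\cN\Gamma}\ker\alpha,$$
so the inequality $\vrank(\cM_2)\le\vrank(\cM_1)+\vrank(\cM_3)$ is automatic, and the task reduces to showing $\dim_{\cN\Gamma}\ker\alpha=0$.

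Next, I would establish $\mrank(\cM)=\vrank(\cM)$ for every finitely generated $\Zb\Gamma$-module $\cM$. Writing $\cM=(\Zb\Gamma)^{1\times n}/R$ and expressing $R=\bigcup_\nu R^{(\nu)}$ as the increasing union of its finitely generated submodules, each quotient $(\Zb\Gamma)^{1\times n}/R^{(\nu)}$ is finitely presented, so Lemma~\ref{L-mrank vs vrank} applies; Lemmas~\ref{L-decreasing limit mrank} and \ref{L-decreasing limit vrank} (both legitimate since $(\Zb\Gamma)^{1\times n}$ is finitely generated) then identify $\mrank(\cM)$ and $\vrank(\cM)$ with the common infimum over $\nu$. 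Combined with Theorem~\ref{T-addition mrank}, this immediately settles the addition formula whenever $\cM_1$ and $\cM_2$ (hence $\cM_3$) are finitely generated.

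From there I would extend to the general case in the spirit of the proofs of Lemmas~\ref{L-addition4} and \ref{L-addition5}: first treat the case where $\cM_1$ is finitely generated and $\cM_2$ is arbitrary, by filtering $\cM_2=\bigcup_\mu\cM_2^{(\mu)}$ by finitely generated submodules containing $\cM_1$ and applying the finitely generated case to each triple $0\to\cM_1\to\cM_2^{(\mu)}\to\cM_2^{(\mu)}/\cM_1\to 0$; then handle general $\cM_1$ by filtering it as $\bigcup_\sigma\cN^{(\sigma)}$ by finitely generated submodules and applying the previous case to each $0\to\cN^{(\sigma)}\to\cM_2\to\cM_2/\cN^{(\sigma)}\to 0$. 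In each passage to the limit I would use Theorem~\ref{T-dim basic}(2) together with the finitely generated addition formula applied to nested inclusions $\cM_2^{(\mu)}\subseteq\cM_2^{(\mu')}$ and $\cN^{(\sigma)}\subseteq\cN^{(\sigma')}$ to conclude that the kernels of the natural maps $\cN\Gamma\otimes_{\Zb\Gamma}\cM_2^{(\mu)}\to\cN\Gamma\otimes_{\Zb\Gamma}\cM_2$ and $\cN\Gamma\otimes_{\Zb\Gamma}\cN^{(\sigma)}\to\cN\Gamma\otimes_{\Zb\Gamma}\cM_1$ are zero-dimensional, yielding the desired continuity $\vrank(\cM_j)=\sup\vrank(\cM_j^{(\cdot)})$.

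The main obstacle will be precisely this last continuity step: because $\cN\Gamma\otimes_{\Zb\Gamma}-$ is only right-exact, the natural maps induced on tensor products from inclusions of submodules need not be injective, and one cannot simply read off $\vrank(\cM_j)=\sup\vrank(\cM_j^{(\mu)})$ from Theorem~\ref{T-dim basic}(2). The resolution is to feed the already-proven finitely generated addition formula back into the argument, applied to the nested inclusions within the filtration, which forces the relevant kernels to be zero-dimensional and closes the bootstrap.
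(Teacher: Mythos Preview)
Your approach is correct and takes a genuinely different route from the paper's. The paper proves this lemma by a direct appeal to L\"uck's result that $\cN\Gamma$ is \emph{dimension flat} over $\Cb\Gamma$ when $\Gamma$ is amenable: one first notes that $\Cb\Gamma\otimes_{\Zb\Gamma}(-)=\Cb\otimes_\Zb(-)$ is exact (since $\Cb$ is torsion-free over $\Zb$), so the short exact sequence survives after tensoring with $\Cb\Gamma$, and then dimension flatness forces $\dim_{\cN\Gamma}\ker\alpha=0$ in one stroke. Your route avoids dimension flatness entirely, bootstrapping instead from the finitely presented equality $\mrank=\vrank$ (Lemma~\ref{L-mrank vs vrank}) together with the addition formula for mean rank (Theorem~\ref{T-addition mrank}) already established. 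What you gain is self-containment---only results proved earlier in the paper plus Theorem~\ref{T-dim basic} are needed---at the price of a longer argument. One point to tighten: in your Step~2 the continuity $\vrank(\cM_1)=\sup_\sigma\vrank(\cN^{(\sigma)})$ alone does not control $\vrank(\cM_2/\cN^{(\sigma)})$ in the limit; the cleanest way to close the bootstrap is to aim directly for $\dim_{\cN\Gamma}\ker\alpha=0$ as in your second paragraph, using that for finitely generated $\cN^{(\sigma)}\subseteq\cM_2^{(\mu)}$ your finitely generated addition formula gives $\dim_{\cN\Gamma}\ker\bigl(\cN\Gamma\otimes\cN^{(\sigma)}\to\cN\Gamma\otimes\cM_2^{(\mu)}\bigr)=0$, and then two applications of the identity $\ker(\text{map to }\varinjlim)=\bigcup\ker(\text{map to finite stage})$ together with Theorem~\ref{T-dim basic}(2) finish the job.
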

\begin{proof} Note that
$ \Cb\otimes_\Zb\Zb\Gamma=\Cb\Gamma$
and hence for any  $\Zb\Gamma$-module $\cM$ one has
$$\Cb\Gamma\otimes_{\Zb\Gamma}\cM=(\Cb\otimes_\Zb\Zb\Gamma)\otimes_{\Zb\Gamma}\cM=\Cb\otimes_\Zb(\Zb\Gamma\otimes_{\Zb\Gamma}\cM)=\Cb\otimes_\Zb\cM.$$
Since $\Cb$ is a torsion-free $\Zb$-module, the functor $\Cb\otimes_\Zb\cdot$ from the category of $\Zb$-modules to the category of $\Cb$-modules is exact \cite[Proposition XVI.3.2]{Lang}. Thus the functor $\Cb\Gamma\otimes_{\Zb\Gamma}\cdot$ from the category of (left) $\Zb\Gamma$-modules to the category of (left) $\Cb\Gamma$-modules is exact. Therefore
we have the short exact sequence
\begin{align} \label{E-exact}
 0\rightarrow \Cb\Gamma\otimes_{\Zb\Gamma}\cM_1\rightarrow \Cb\Gamma\otimes_{\Zb\Gamma}\cM_2\rightarrow \Cb\Gamma\otimes_{\Zb\Gamma}\cM_3\rightarrow 0
\end{align}
of $\Cb\Gamma$-modules. Note that
$$\cN\Gamma\otimes_{\Cb\Gamma}(\Cb\Gamma\otimes_{\Zb\Gamma}\cM)=(\cN\Gamma\otimes_{\Cb\Gamma}\Cb\Gamma)\otimes_{\Zb\Gamma}\cM=\cN\Gamma\otimes_{\Zb\Gamma}\cM$$
for every $\Zb\Gamma$-module $\cM$. By Lemma~\ref{L-right exact}
we have an exact sequence
$$ 0\rightarrow \fM\rightarrow \cN\Gamma\otimes_{\Zb\Gamma}\cM_1\rightarrow \cN\Gamma\otimes_{\Zb\Gamma}\cM_2\rightarrow \cN\Gamma\otimes_{\Zb\Gamma}\cM_3\rightarrow 0$$
of $\cN\Gamma$-modules, where $\fM$ is the kernel of the homomorphism $\cN\Gamma\otimes_{\Zb\Gamma}\cM_1\rightarrow \cN\Gamma\otimes_{\Zb\Gamma}\cM_2$.
L\"{u}ck showed that $\cN\Gamma$ as a right $\Cb\Gamma$-module is dimension flat, i.e. for any $\Cb\Gamma$-chain complex $\cC_*$ with $\cC_p=0$ for all $p<0$, one has \cite[page 275]{Luck}
$$ \dim_{\cN\Gamma}(H_p(\cN\Gamma\otimes_{\Cb\Gamma}\cC_*))=\dim_{\cN\Gamma}(\cN\Gamma\otimes_{\Cb\Gamma}H_p(\cC_*))$$
for all $p\in \Zb$.
Taking $\cC_*$ to be the exact sequence \eqref{E-exact}, we get
$$\dim_{\cN\Gamma}\fM=\dim_{\cN\Gamma}(H_2(\cN\Gamma\otimes_{\Cb\Gamma}\cC_*))=\dim_{\cN\Gamma}(\cN\Gamma\otimes_{\Cb\Gamma}H_2(\cC_*))
=\dim_{\cN\Gamma}(\cN\Gamma\otimes_{\Cb\Gamma}0)=0.$$
By Theorem~\ref{T-dim basic} we have
\begin{align*}
\vrank(\cM_1)+\vrank(\cM_3)&=\dim_{\cN\Gamma}(\cN\Gamma\otimes_{\Zb\Gamma}\cM_1)+\dim_{\cN\Gamma}(\cN\Gamma\otimes_{\Zb\Gamma}\cM_3)\\
&=\dim_{\cN\Gamma}\fM+\dim_{\cN\Gamma}(\cN\Gamma\otimes_{\Zb\Gamma}\cM_2)\\
&=\dim_{\cN\Gamma}(\cN\Gamma\otimes_{\Zb\Gamma}\cM_2)=\vrank(\cM_2).
\end{align*}
\end{proof}

The next lemma is the analogue of Lemma~\ref{L-increasing limit mrank} for von Neumann-L\"{u}ck rank.
Lemmas~\ref{L-increasing limit mrank} and \ref{L-increasing limit vrank} together
will enable us to pass from finitely generated modules to arbitrary modules in the proof of Theorem~\ref{T-mrank vs vrank}.

\begin{lemma} \label{L-increasing limit vrank}
Let $\cM$ be a $\Zb\Gamma$-module and $\{\cM_n\}_{n\in J}$ be an increasing net of submodules of $\cM$ with union $\cM$. Then $$\vrank(\cM)=
\lim_{n\to\infty}\vrank(\cM_n)=\sup_{n\in J}\vrank(\cM_n).$$
\end{lemma}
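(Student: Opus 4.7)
The plan is to transfer the statement, via tensoring with $\cN\Gamma$, into an increasing-union statement inside the fixed $\cN\Gamma$-module $\fM := \cN\Gamma \otimes_{\Zb\Gamma} \cM$, and then invoke Theorem~\ref{T-dim basic}(2). For each $n \in J$, the inclusion $\cM_n \hookrightarrow \cM$ induces a natural $\cN\Gamma$-homomorphism $\cN\Gamma \otimes_{\Zb\Gamma} \cM_n \to \fM$, whose image I denote by $\fN_n$. Functoriality of the tensor product makes $\{\fN_n\}_{n \in J}$ an increasing net of $\cN\Gamma$-submodules of $\fM$, and $\bigcup_{n \in J} \fN_n = \fM$: any element of $\fM$ is a finite sum $\sum_i a_i \otimes m_i$ with each $m_i$ in some $\cM_{n_i}$, and since $J$ is directed one can take $n \in J$ dominating all the $n_i$, putting the sum into $\fN_n$.

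Applying Theorem~\ref{T-dim basic}(2) to the net $\{\fN_n\}$ immediately yields
$$\vrank(\cM) = \dim_{\cN\Gamma}(\fM) = \sup_{n\in J} \dim_{\cN\Gamma}(\fN_n) = \lim_{n\to\infty} \dim_{\cN\Gamma}(\fN_n),$$
the last equality being valid because $\dim_{\cN\Gamma}(\fN_n)$ is monotone in $n$ (an immediate consequence of Theorem~\ref{T-dim basic}(1) applied to the inclusion $\fN_n \subseteq \fN_m$ for $n \le m$). So it remains to identify $\dim_{\cN\Gamma}(\fN_n)$ with $\vrank(\cM_n)$ for each $n \in J$.

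For this, I would tensor the short exact sequence $0 \to \cM_n \to \cM \to \cM/\cM_n \to 0$ with $\cN\Gamma$ and let $\mathfrak{K}_n$ denote the kernel of the induced map $\cN\Gamma \otimes_{\Zb\Gamma} \cM_n \to \fM$. The argument in the proof of Lemma~\ref{L-addition vrank}, based on L\"{u}ck's dimension-flatness of $\cN\Gamma$ over $\Cb\Gamma$, shows that $\dim_{\cN\Gamma}(\mathfrak{K}_n) = 0$. The short exact sequence
$$0 \to \mathfrak{K}_n \to \cN\Gamma \otimes_{\Zb\Gamma} \cM_n \to \fN_n \to 0$$
combined with Theorem~\ref{T-dim basic}(1) then gives $\dim_{\cN\Gamma}(\fN_n) = \dim_{\cN\Gamma}(\cN\Gamma \otimes_{\Zb\Gamma} \cM_n) = \vrank(\cM_n)$, completing the proof.

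The only point that requires care, and hence the main obstacle, is the identification $\dim_{\cN\Gamma}(\fN_n) = \vrank(\cM_n)$: without the vanishing $\dim_{\cN\Gamma}(\mathfrak{K}_n) = 0$ one cannot rule out a loss of dimension, since the functor $\cN\Gamma \otimes_{\Zb\Gamma}(\cdot)$ does not preserve injections in general. Once that vanishing has been secured, all remaining work is formal bookkeeping with Theorem~\ref{T-dim basic}.
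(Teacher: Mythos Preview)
Your proof is correct and follows essentially the same approach as the paper's: both pass to the images $\fN_n$ (the paper calls them $\fM_n$) of $\cN\Gamma\otimes_{\Zb\Gamma}\cM_n$ inside $\cN\Gamma\otimes_{\Zb\Gamma}\cM$, apply Theorem~\ref{T-dim basic}(2) to the increasing net $\{\fN_n\}$, and then invoke the dimension-flatness argument from the proof of Lemma~\ref{L-addition vrank} to see that the kernel $\mathfrak{K}_n$ has von Neumann dimension zero, so that $\dim_{\cN\Gamma}\fN_n=\vrank(\cM_n)$. The only cosmetic difference is that the paper first records $\lim_n\vrank(\cM_n)=\sup_n\vrank(\cM_n)$ directly from Lemma~\ref{L-addition vrank}, whereas you deduce it from monotonicity of $\dim_{\cN\Gamma}\fN_n$; this is immaterial.
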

\begin{proof} From Lemma~\ref{L-addition vrank} we have $\lim_{n\to\infty}\vrank(\cM_n)=\sup_{n\in J}\vrank(\cM_n).$

For each $n\in J$ denote by $\fM_n$ the image of the natural homomorphism $\cN\Gamma\otimes_{\Zb\Gamma}\cM_n\rightarrow \cN\Gamma\otimes_{\Zb\Gamma}\cM$. Then $\{\fM_n\}_{n\in J}$ is an increasing net of submodules of $\cN\Gamma\otimes_{\Zb\Gamma}\cM$ with union $\cN\Gamma\otimes_{\Zb\Gamma}\cM$. By Theorem~\ref{T-dim basic} we have
$$ \vrank(\cM)=\dim_{\cN\Gamma}(\cN\Gamma\otimes_{\Zb\Gamma}\cM)=\sup_{n\in J}\dim_{\cN\Gamma}\fM_n.$$

Let $n\in J$. Taking $\cM_1=\cM_n$ and $\cM_2=\cM$ in Lemma~\ref{L-addition vrank}, the argument in the proof there shows that $\dim_{\cN\Gamma}\fM'_n=0$ for
$\fM'_n$ denoting
the kernel of the homomorphism $\cN\Gamma\otimes_{\Zb\Gamma}\cM_n\rightarrow \cN\Gamma\otimes_{\Zb\Gamma}\cM$. From Theorem~\ref{T-dim basic} we then conclude that $\dim_{\cN\Gamma}\fM_n=\dim_{\cN\Gamma}(\cN\Gamma\otimes_{\Zb\Gamma}\cM_n)=\vrank(\cM_n)$. Therefore
$$ \vrank(\cM)=\sup_{n\in J}\dim_{\cN\Gamma}\fM_n=\sup_{n\in J}\vrank(\cM_n).$$
\end{proof}

We are ready to prove Theorem~\ref{T-mrank vs vrank}.

\begin{proof}[Proof of Theorem~\ref{T-mrank vs vrank}] By Lemma~\ref{L-mrank vs vrank} we know that the theorem holds for all finitely presented $\Zb\Gamma$-modules.

Let $\cM$ be a finitely generated $\Zb\Gamma$-module. Then $\cM=(\Zb\Gamma)^m/\cM_\infty$ for some $m\in \Nb$ and some submodule $\cM_\infty$ of $(\Zb\Gamma)^m$.
Write $\cM_\infty$ as the union of an increasing net of finitely generated submodules $\{\cM_n\}_{n\in J}$ with $\infty\not \in J$.  Then $(\Zb\Gamma)^m/\cM_n$ is a finitely presented $\Zb\Gamma$-module for each $n\in J$, and hence
\begin{align*}
\mrank(\cM)&\overset{Lemma~\ref{L-decreasing limit mrank}}=\lim_{n\to \infty}\mrank((\Zb\Gamma)^m/\cM_n)\\
&=\lim_{n\to \infty} \vrank((\Zb\Gamma)^m/\cM_n)\\
&\overset{Lemma~\ref{L-decreasing limit vrank}}=\vrank(\cM).
\end{align*}

Finally, since every $\Zb\Gamma$-module is the union  of an increasing net of finitely generated submodules, by Lemmas~\ref{L-increasing limit mrank} and \ref{L-increasing limit vrank} we conclude that the theorem holds for every $\Zb\Gamma$-module.
\end{proof}

\section{Applications} \label{S-applications}

Every compact metrizable space is a quotient space of the Cantor set. Thus dimension does not necessarily decrease when passing to quotient spaces of compact metrizable spaces. Similarly, one can show that mean dimension does not necessarily decrease when passing to factors of continuous $\Gamma$-actions on compact metrizable spaces, where we say that a continuous $\Gamma$-action on a compact metrizable space $Y$ is a {\it factor} of a continuous $\Gamma$-action on a compact metrizable space $X$ if there is a surjective continuous $\Gamma$-equivariant map $X\rightarrow Y$.
On the other hand, in algebraic setting, dimension does decrease when passing to quotient space:
for any compact metrizable group $X$ and any closed subgroup $Y$, one has $\dim(X)=\dim(Y)+\dim(X/Y)$ \cite{Yamanoshita} \cite[Theorem 3]{Karube}.
From Theorems~\ref{T-addition mrank} and \ref{T-mdim vs mrank}, we obtain that, in algebraic setting, mean dimension also decreases when passing to factors in algebraic situation:

\begin{corollary} \label{C-addition mdim}
For any short exact sequence
$$0\rightarrow X_1\rightarrow X_2\rightarrow X_3\rightarrow 0$$
of compact metrizable abelian groups carrying continuous $\Gamma$-actions by automorphisms and  $\Gamma$-equivariant continuous homomorphisms, we have
$$ \mdim(X_2)=\mdim(X_1)+\mdim(X_3).$$
In particular, $\mdim(X_2)\ge \mdim(X_3)$.
\end{corollary}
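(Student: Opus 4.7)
The plan is to translate the short exact sequence of compact metrizable abelian groups into a short exact sequence of $\Zb\Gamma$-modules via Pontryagin duality, and then invoke Theorem~\ref{T-main} (more precisely, Theorems~\ref{T-mdim vs mrank} and \ref{T-addition mrank}) to reduce the addition formula for mean dimension to the already-established addition formula for mean rank.

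First I would write $X_i=\widehat{\cM_i}$ for a $\Zb\Gamma$-module $\cM_i$, which is possible by the discussion at the beginning of Section~\ref{S-mdim vs mrank}: every algebraic action of $\Gamma$ arises as the dual of some $\Zb\Gamma$-module. The $\Gamma$-equivariant continuous group homomorphisms in the sequence $0\to X_1\to X_2\to X_3\to 0$ dualize to $\Zb\Gamma$-module homomorphisms between the $\cM_i$. Since Pontryagin duality is a contravariant exact equivalence between the category of discrete abelian groups and the category of compact Hausdorff abelian groups (and it intertwines $\Gamma$-actions in the obvious way), the dualized sequence
\[ 0\to \cM_3\to \cM_2\to \cM_1\to 0 \]
is a short exact sequence of $\Zb\Gamma$-modules.

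Second, I would apply Theorem~\ref{T-mdim vs mrank} to identify $\mdim(X_i)=\mdim(\widehat{\cM_i})=\mrank(\cM_i)$ for $i=1,2,3$, and then Theorem~\ref{T-addition mrank} applied to the dualized sequence yields
\[ \mrank(\cM_2)=\mrank(\cM_3)+\mrank(\cM_1). \]
Combining these gives $\mdim(X_2)=\mdim(X_1)+\mdim(X_3)$, and the inequality $\mdim(X_2)\ge \mdim(X_3)$ is immediate since $\mdim(X_1)\ge 0$.

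There is essentially no obstacle here: the argument is a direct combination of the main theorem with the exactness of Pontryagin duality. The only minor point worth checking is that the duality functor sends the specific morphisms in the exact sequence to $\Zb\Gamma$-module maps respecting the module structures defined in Section~\ref{S-mdim vs mrank}, but this is automatic from the formula $(sx)(a)=x(s^{-1}a)$ and the $\Gamma$-equivariance of the original maps. The nonnegativity of mean rank (visible directly from the definition, as $\rank$ is nonnegative) takes care of the final inequality.
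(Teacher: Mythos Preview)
Your proposal is correct and matches the paper's approach exactly: the paper presents Corollary~\ref{C-addition mdim} as an immediate consequence of Theorems~\ref{T-addition mrank} and \ref{T-mdim vs mrank}, which is precisely the Pontryagin-duality reduction you carry out in detail.
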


The analogue of the addition formula for entropy was established in \cite[Corollary 6.3]{Li}, and is crucial for the proof of the relation between entropy and $L^2$-torsion in \cite{LT}.

The following is an application to von Neumann-L\"{u}ck rank.

\begin{corollary} \label{C-vrank}
Suppose that $\Gamma$ is infinite and $\cM$ is a $\Zb\Gamma$-module with finite rank as an abelian group. Then $\vrank(\cM)=0$.
\end{corollary}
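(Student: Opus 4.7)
The plan is to use Theorem~\ref{T-main} to reduce the claim to the statement $\mrank(\cM)=0$, and then verify this directly from the definition of mean rank, exploiting the fact that the rank of any subgroup of $\cM$ is bounded by the (finite) rank of $\cM$ itself.

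More precisely, by Theorem~\ref{T-main} we have $\vrank(\cM)=\mrank(\cM)$, so it suffices to show $\mrank(\cM)=0$. Recall
$$ \mrank(\cM)=\sup_{A\in\cF(\cM)}\inf_{F\in\cF(\Gamma)}\frac{\rank(\langle F^{-1}A\rangle)}{|F|}. $$
Fix any $A\in\cF(\cM)$. For every $F\in\cF(\Gamma)$ the subgroup $\langle F^{-1}A\rangle$ sits inside $\cM$, so
$$ \rank(\langle F^{-1}A\rangle)\le \rank(\cM)<\infty. $$
Since $\Gamma$ is infinite, $\cF(\Gamma)$ contains subsets of arbitrarily large cardinality, hence
$$ \inf_{F\in\cF(\Gamma)}\frac{\rank(\langle F^{-1}A\rangle)}{|F|}\le \inf_{F\in\cF(\Gamma)}\frac{\rank(\cM)}{|F|}=0. $$
Taking the supremum over $A\in\cF(\cM)$ gives $\mrank(\cM)=0$, and therefore $\vrank(\cM)=0$.

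There is essentially no obstacle here: the finite-rank hypothesis on $\cM$ as an abelian group makes the bound $\rank(\langle F^{-1}A\rangle)\le \rank(\cM)$ uniform in $F$, and amenability of $\Gamma$ is used only implicitly through the applicability of Theorem~\ref{T-main}; the vanishing itself comes purely from dividing a bounded quantity by $|F|\to\infty$. (One could alternatively argue directly on the $\vrank$ side by writing $\cM$ as an increasing union of its finitely generated submodules, each of which has finite rank as an abelian group, and invoking Lemma~\ref{L-increasing limit vrank} together with a direct computation showing that finitely generated $\Zb\Gamma$-modules of finite abelian rank have vanishing von Neumann--L\"uck rank; but routing through mean rank is the cleanest path given what has been established.)
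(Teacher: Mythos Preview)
Your proof is correct and follows essentially the same route as the paper: bound $\rank(\langle F^{-1}A\rangle)$ by $\rank(\cM)<\infty$, use infiniteness of $\Gamma$ to make $|F|$ large, and invoke the identity $\vrank(\cM)=\mrank(\cM)$. The only cosmetic difference is that the paper phrases the vanishing via $\lim_F$ rather than $\inf_F$, and cites Theorem~\ref{T-mrank vs vrank} directly rather than Theorem~\ref{T-main}.
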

\begin{proof}
For any $A\in \cF(\cM)$ and $F\in \cF(\Gamma)$, one has $\rank(\left< F^{-1}A\right>)\le \rank(\cM)<+\infty$, and hence
$\lim_F\frac{\rank(\left<F^{-1}A\right>)}{|F|}=0$. Thus by Theorem~\ref{T-mrank vs vrank} we have $\vrank(\cM)=\mrank(\cM)=0$.
\end{proof}

\begin{example} \label{Ex-vrank}
Let $\varphi$ be a group homomorphism from $\Gamma$ into $\GL_n(\Qb)$ for some $n\in \Nb$, where $\GL_n(\Qb)$ denotes the group of all invertible elements in
$M_n(\Qb)$. Then $\Gamma$ acts on $\Qb^{n\times 1}$ via $\varphi$, and hence $\Qb^{n\times 1}$ becomes a $\Zb\Gamma$-module. Note that the rank of $\Qb^{n\times 1}$ as abelian group is equal to $n$. Thus when $\Gamma$ is infinite, by Corollary~\ref{C-vrank} we have $\vrank(\Qb^{n\times 1})=0$.
\end{example}

\section{Metric mean dimension} \label{S-metric mdim}

Throughout this section $\Gamma$ will be a discrete countable amenable group.

Lindenstrauss and Weiss also introduced a dynamical analogue of the Minkowski dimension. Let $\Gamma$ act on a compact metrizable space $X$ continuously, and $\rho$ be a continuous pseudometric on $X$. For $\varepsilon>0$, a subset $Y$ of $X$ is called {\it $(\rho, \varepsilon)$-separated} if $\rho(x, y)\ge \varepsilon$ for any distinct $x$ and $y$ in $Y$. Denote by $N_\varepsilon(X, \rho)$ the maximal cardinality of $(\rho, \varepsilon)$-separated subsets of $X$. For any $F\in \cF(\Gamma)$, we define a new continuous pseudometric $\rho_F$ on $X$ by $\rho_F(x, y)=\max_{s\in F}\rho(sx, sy)$. The {\it metric mean dimension} of the action $\Gamma\curvearrowright X$ with respect to $\rho$ \cite[Definition 4.1]{LW}, denoted by $\mdim_\rM(X, \rho)$, is defined as
$$ \mdim_\rM(X, \rho):=\varliminf_{\varepsilon\to 0}\frac{1}{|\log \varepsilon|}\varlimsup_F\frac{\log N_\varepsilon(X, \rho_F)}{|F|}.$$
When $\Gamma$ is the trivial group, the metric mean dimension is exactly the Minkowski dimension of $(X, \rho)$.

A well-known theorem of Pontryagin and Schnirelmann \cite{PS} \cite[page 80]{Nagata}
says that for any compact metrizable space $X$, the  covering dimension of $X$ is equal to the minimal value of the Minkowski dimension of $(X, \rho)$ for $\rho$ ranging over compatible metrics on $X$.
Since mean dimension and metric mean dimension
are dynamical analogues of  covering dimension and Minkowski dimension respectively,
it is natural to ask

\begin{question} \label{Q-top vs metric}
Let $\Gamma$ act continuously on a compact metrizable space $X$. Is $\mdim(X)$ equal to the minimal value of $\mdim_\rM(X, \rho)$ for $\rho$ ranging over compatible metrics on $X$?
\end{question}

Lindenstrauss and Weiss showed that $\mdim(X)\le \mdim_\rM(X, \rho)$ for every compatible metric $\rho$ on $X$ \cite[Theorem 4.2]{LW}. Thus Question~\ref{Q-top vs metric} reduces to the question whether $\mdim(X)=\mdim_\rM(X, \rho)$ for some compatible metric $\rho$ on $X$. For $\Gamma=\Zb$, Lindenstrauss showed that this is true when the action $\Zb\curvearrowright X$ has an infinite minimal factor \cite[Theorem 4.3]{Lindenstrauss}, and Gutman showed that this is true when
$\Zb\curvearrowright X$ has the so-called {\it marker property} \cite[Definition 3.1, Theorem 8.1]{Gutman13}, in particular when $\Zb\curvearrowright X$ has an aperiodic factor $\Zb\curvearrowright Y$ such that either $Y$ is finite-dimensional or $\Zb\curvearrowright Y$ has only a countable number of closed invariant minimal subsets or $\Zb\curvearrowright Y$ has a so-called {\it compact minimal subsystems selector}
\cite[Theorem 8.2, Definition 3.8]{Gutman13}.

We answer Question~\ref{Q-top vs metric} affirmatively for algebraic actions. In fact, we prove a stronger conclusion that one can even find a translation-invariant metric with minimal metric mean dimension. Recall that a pseudometric $\rho$ on a compact abelian group is said to be {\it translation-invariant} if $\rho(x+y, x+z)=\rho(y, z)$ for all $x, y, z\in X$.

\begin{theorem} \label{T-metric mdim}
Let $\Gamma$ act on a compact metrizable abelian group $X$ by continuous automorphisms. Then
there is a  translation-invariant compatible metric $\rho$ on $X$ satisfying $\mdim(X)=\mdim_\rM(X, \rho)$.
\end{theorem}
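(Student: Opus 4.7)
The plan is to build a translation-invariant compatible metric $\rho$ on $X=\widehat{\cM}$ as a weighted sum of coordinate-evaluation pseudometrics coming from an exhausting sequence of finite subsets of $\cM$, and then to bound $\mdim_\rM(X,\rho)$ from above by $\mdim(X)$; the reverse inequality $\mdim(X)\le\mdim_\rM(X,\rho)$ is \cite[Theorem 4.2]{LW}. Since $X$ is metrizable, $\cM$ is countable, so one can fix an increasing chain $B_1\subseteq B_2\subseteq\cdots$ in $\cF(\cM)$ with $\bigcup_n B_n=\cM$. With $d$ the standard translation-invariant metric on $\Tb=\Rb/\Zb$ of diameter $1/2$, set
$$\rho_n(x,y):=\max_{a\in B_n}d(x(a),y(a)),\qquad \rho(x,y):=\sum_{n\ge 1}2^{-n}\rho_n(x,y).$$
Each $\rho_n$ is a translation-invariant continuous pseudometric bounded by $1/2$ and non-decreasing in $n$, and $\rho$ is a translation-invariant compatible metric satisfying $\rho\le\rho_n+2^{-n}$ for every $n$.

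For $F\in\cF(\Gamma)$, the dynamical pseudometric $(\rho_n)_F$ equals $\rho_{F^{-1}B_n}$ and is the pullback under the evaluation map $X\to\Tb^{F^{-1}B_n}$ of the $d_\infty$-metric on $\Tb^{F^{-1}B_n}$; its image is $Z_{F,n}:=\widehat{\left<F^{-1}B_n\right>}$, a closed subgroup of covering dimension $r_{F,n}:=\rank\left<F^{-1}B_n\right>$ by Pontryagin's theorem $\dim\widehat{\sN}=\rank\sN$. From $\rho\le\rho_n+2^{-n}$ one gets $N_\varepsilon(X,\rho_F)\le N_{\varepsilon/2}(Z_{F,n},d_\infty)$ for all $\varepsilon>2^{1-n}$. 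The crux is a uniform covering-number estimate of the shape
$$\log N_\delta(Z_{F,n},d_\infty)\le r_{F,n}\log(1/\delta)+\psi_n(F),$$
with $\psi_n(F)$ independent of $\delta$ and $\sup_F\psi_n(F)/|F|<\infty$. Granting this, for each $\varepsilon>0$ pick $n(\varepsilon)\to\infty$ as $\varepsilon\to 0$ with $\varepsilon>2^{1-n(\varepsilon)}$; dividing by $|F|\cdot|\log\varepsilon|$, taking $\varlimsup_F$ and then $\varliminf_\varepsilon$, and using Lemma~\ref{L-mrank def} together with the fact that every $A\in\cF(\cM)$ lies in some $B_n$, one concludes
$$\mdim_\rM(X,\rho)\le\sup_n\inf_F\frac{r_{F,n}}{|F|}=\mrank(\cM)=\mdim(X),$$
where the last equality is Theorem~\ref{T-main}.

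The main obstacle is the covering-number estimate. My plan is to decompose $Z_{F,n}=Z_{F,n}^0\cdot T_{F,n}$ into the identity component $Z_{F,n}^0\cong V/(V\cap\Zb^{F^{-1}B_n})$, with $V\subseteq\Rb^{F^{-1}B_n}$ a subspace of dimension $r_{F,n}$, and the finite component group $T_{F,n}$, which is Pontryagin-dual to the torsion subgroup of $\left<F^{-1}B_n\right>$. A Haar-measure packing argument gives $N_\delta(Z_{F,n}^0,d_\infty)\le 1/\mu\bigl(B_\infty(\delta/2)\cap Z_{F,n}^0\bigr)$, and an orthonormal basis $u_1,\dots,u_{r_{F,n}}$ of $V$ in $\Rb^{F^{-1}B_n}$ produces the inclusion $\{\sum c_iu_i:|c_i|\le\delta/(2r_{F,n})\}\subseteq V\cap(-\delta/2,\delta/2)^{F^{-1}B_n}$, which lower-bounds this measure by a quantity of order $(\delta/r_{F,n})^{r_{F,n}}/\mathrm{covol}(V\cap\Zb^{F^{-1}B_n})$. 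It then remains to show that both $\log\mathrm{covol}(V\cap\Zb^{F^{-1}B_n})$ and $\log|T_{F,n}|$ are $O(|F|)$, for which I would appeal to a Hadamard-type bound on a generating set of the kernel of the surjection $\Zb^{F^{-1}B_n}\twoheadrightarrow\left<F^{-1}B_n\right>$: its generators can be taken to be translates by elements of $F$ of finitely many relations among $B_n$ in $\cM$, and hence have $\ell^\infty$-norms uniformly bounded in $F$, giving the required linear-in-$|F|$ control via Cauchy--Binet.
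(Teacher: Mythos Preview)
Your approach has a genuine gap: the metric $\rho=\sum_{n\ge1}2^{-n}\rho_n$ with \emph{fixed} geometric weights need not satisfy $\mdim_\rM(X,\rho)=\mdim(X)$. Take $\Gamma=\{e\}$ and $\cM=\bigoplus_{k\ge1}\Zb/2\Zb$ with $B_n=\{e_1,\dots,e_n\}$. Then $X=\widehat{\cM}$ is a Cantor set, so $\mdim(X)=\dim(X)=0$; but your $\rho$ reduces to $\rho(x,y)=2^{-m}$ where $m$ is the first coordinate at which $x,y$ differ, whence $N_{2^{-m}}(X,\rho)=2^m$ and $\mdim_\rM(X,\rho)=1$. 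The structural reason your argument breaks is that the constraint $\varepsilon>2^{1-n(\varepsilon)}$ forces $n(\varepsilon)\gtrsim\log(1/\varepsilon)$, so to kill the term $C_{n(\varepsilon)}/|\log\varepsilon|$ you would need $C_n=o(n)$, which is false in general. There is also a more immediate problem: your own volume bound via $|c_i|\le\delta/(2r_{F,n})$ yields $\log N_\delta\le r_{F,n}\log(1/\delta)+r_{F,n}\log r_{F,n}+\log\mathrm{covol}$, and since $r_{F,n}$ can be of order $|F|$ the term $r_{F,n}\log r_{F,n}/|F|$ is already unbounded in $F$, so the claimed inequality $\sup_F\psi_n(F)/|F|<\infty$ fails even for fixed $n$.

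The paper avoids both issues by choosing the weights \emph{adaptively}. It first proves the result for finitely presented modules directly (Lemma~\ref{L-finitely presented case metric mdim}), via a separated-set estimate in $(\Rb[F])^{n\times1}$ that uses Lemma~\ref{L-volume to covering} together with Elek's formula (Lemma~\ref{L-Elek}); this produces a bound of the form $r\log(1/\varepsilon)+m|F|\log(\|f\|_1+1)$ with no $r\log r$ term and with the additive constant linear in $|F|$ with a coefficient depending only on $f$. It then passes to finitely generated modules by monotonicity (Lemma~\ref{L-finitely generated case metric mdim}). Finally, for a general countable $\cM=\bigcup_j\cM_j$ it writes $\widehat{\cM}=\varprojlim\widehat{\cM_j}$ and, crucially, picks $\lambda_j$ in $\rho=\max_j\lambda_j\vartheta^{A_j}$ only after a suitable scale $\varepsilon_j$ has been extracted at stage $j$ (Lemma~\ref{L-metric mdim of projective limit}); this adaptivity is exactly what absorbs the uncontrolled growth of your $C_n$.
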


For a continuous action $\Gamma\curvearrowright X$ of $\Gamma$ on some compact metrizable space $X$, we say that a continuous pseudometric $\rho$ on $X$ is {\it dynamically generating} if $\sup_{s\in \Gamma}\rho(sx, sy)>0$ for any distinct $x, y\in X$. It is well known that for any dynamically generating continuous pseudometric $\rho$ on $X$ one can turn it into a compatible metric without changing the metric mean dimension, see for example \cite[page 238]{Lindenstrauss}.
For completeness, we give a proof here.

\begin{lemma} \label{L-pseudometric to metric}
Let $\Gamma$ act continuously on a compact metrizable space $X$, and $\rho$ be a dynamically-generating continuous pseudometric on $X$.
List the elements of $\Gamma$ as $s_1, s_2, \cdots$. For $x, y\in X$ set $\tilde{\rho}(x, y)=\max_{j\in \Nb}2^{-j}\rho(s_jx, s_jy)$. Then $\tilde{\rho}$ is a compatible metric
on $X$, and $\mdim_\rM(X, \rho)=\mdim_\rM(X, \tilde{\rho})$. In particular, one has $\mdim(X)\le \mdim_\rM(X, \rho)$.
\end{lemma}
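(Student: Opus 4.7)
My plan splits into three parts: showing $\tilde{\rho}$ is a compatible metric, establishing the equality $\mdim_\rM(X,\rho)=\mdim_\rM(X,\tilde{\rho})$, and deducing $\mdim(X)\le\mdim_\rM(X,\rho)$. For the metric claim, symmetry is obvious and the triangle inequality holds term by term: $2^{-j}\rho(s_jx, s_jz) \le 2^{-j}\rho(s_jx,s_jy) + 2^{-j}\rho(s_jy,s_jz) \le \tilde{\rho}(x,y) + \tilde{\rho}(y,z)$, with positive definiteness exactly the dynamically-generating hypothesis. Continuity follows because the truncated maxima $\max_{j\le N}2^{-j}\rho(s_j\cdot, s_j\cdot)$ are continuous and converge uniformly to $\tilde{\rho}$ with error at most $2^{-N}\diam_\rho(X)$. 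Compatibility with the original topology is then automatic, since a continuous metric on a compact Hausdorff space always induces that topology.

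For the easier direction of the equality, I fix $j_0$ with $s_{j_0}=e$, which gives $\rho \le 2^{j_0}\tilde{\rho}$ and $\rho_F \le 2^{j_0}\tilde{\rho}_F$. Hence any $(\rho_F,\varepsilon)$-separated set is $(\tilde{\rho}_F, 2^{-j_0}\varepsilon)$-separated, so $N_\varepsilon(X,\rho_F) \le N_{2^{-j_0}\varepsilon}(X,\tilde{\rho}_F)$; since $|\log(2^{-j_0}\varepsilon)|/|\log\varepsilon| \to 1$ as $\varepsilon\to 0$, this yields $\mdim_\rM(X,\rho)\le\mdim_\rM(X,\tilde{\rho})$.

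For the reverse direction I use truncation. Set $D=\diam_\rho(X)$; for each $\varepsilon>0$ pick $n_\varepsilon\in\Nb$ with $2^{-n_\varepsilon}D<\varepsilon$ and let $K_\varepsilon=\{s_1,\dots,s_{n_\varepsilon}\}$. Then $\max_{j>n_\varepsilon}2^{-j}\rho(s_j\cdot,s_j\cdot)<\varepsilon$, so if $\tilde{\rho}_F(x,y)\ge\varepsilon$ the maximum defining $\tilde{\rho}_F$ must be attained at some $j\le n_\varepsilon$ and $s\in F$, giving $\rho_{K_\varepsilon F}(x,y)\ge\varepsilon$. Thus $(\tilde{\rho}_F,\varepsilon)$-separated sets are $(\rho_{K_\varepsilon F},\varepsilon)$-separated, and $N_\varepsilon(X,\tilde{\rho}_F)\le N_\varepsilon(X,\rho_{K_\varepsilon F})$. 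The main obstacle is then the auxiliary F{\o}lner-averaging fact that for any fixed finite $K\subseteq\Gamma$,
\[
\varlimsup_F \frac{\log N_\varepsilon(X,\rho_{KF})}{|F|} \le \varlimsup_F \frac{\log N_\varepsilon(X,\rho_F)}{|F|}.
\]
To prove it, denote the right-hand side by $L$, fix $\eta>0$, and choose $(K'',\delta'')$ witnessing $L$ up to $\eta$. For $F$ sufficiently left-invariant (with respect to both $K''K$ and $K$), one has $KF\in\cB(K'',\delta'')$ and $|KF|/|F|\le 1+\eta$, so
\[
\frac{\log N_\varepsilon(X,\rho_{KF})}{|F|} = \frac{|KF|}{|F|}\cdot\frac{\log N_\varepsilon(X,\rho_{KF})}{|KF|} \le (1+\eta)(L+\eta),
\]
and letting $\eta\to 0$ gives the bound. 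Applied with $K=K_\varepsilon$ and followed by $\varliminf_{\varepsilon\to 0}$, this produces $\mdim_\rM(X,\tilde{\rho})\le\mdim_\rM(X,\rho)$. Finally, the ``in particular'' clause follows from this equality together with the Lindenstrauss--Weiss inequality $\mdim(X)\le\mdim_\rM(X,\tilde{\rho})$ applied to the compatible metric $\tilde{\rho}$.
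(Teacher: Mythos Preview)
Your proof is correct and follows essentially the same approach as the paper: the compatibility of $\tilde{\rho}$ via continuity plus point-separation, the easy inequality from $s_{j_0}=e$, the reverse inequality via truncation at level $n_\varepsilon$ together with the F{\o}lner averaging observation that $\varlimsup_F \frac{\log N_\varepsilon(X,\rho_{KF})}{|F|}\le \varlimsup_F \frac{\log N_\varepsilon(X,\rho_F)}{|F|}$, and the final appeal to the Lindenstrauss--Weiss inequality. Your treatment of the F{\o}lner step is more explicit than the paper's (which simply writes $\varlimsup_F \frac{\log N_\varepsilon(X,\rho_{KF})}{|F|}=\varlimsup_F \frac{\log N_\varepsilon(X,\rho_{KF})}{|KF|}\le \varlimsup_F \frac{\log N_\varepsilon(X,\rho_F)}{|F|}$), but the content is the same.
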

\begin{proof} Since $\tilde{\rho}$ is a continuous pseudometric on the compact space $X$ separating the points of $X$, the identity map from $X$ equipped the original topology to $X$ equipped with the topology induced by $\tilde{\rho}$ is continuous and hence must be a homeomorphism. Therefore $\tilde{\rho}$ is a compatible metric on $X$.

Say, $e=s_k$.
Note that $\mdim_\rM(X, \rho)=\mdim_\rM(X, 2^{-k}\rho)\le \mdim_\rM(X, \tilde{\rho})$.

Let $\varepsilon>0$. Take $m\in \Nb$ such that $2^{-m}\diam(X, \rho)\le \varepsilon$. Set $K=\{s_1,\dots, s_m\}\in \cF(\Gamma)$. For any $x, y\in X$ and $t\in \Gamma$, one has
$$ \tilde{\rho}(tx, ty)\le \max(2^{-m-1}\diam(X, \rho), \max_{1\le j\le m}2^{-j}\rho(s_jtx, s_jty))\le \max(\varepsilon/2, \max_{s\in K}\rho(stx, sty)),$$
and hence for any $F\in \cF(\Gamma)$,
$$ \tilde{\rho}_F(x, y)\le \max(\varepsilon/2, \rho_{KF}(x, y)).$$
It follows that $N_\varepsilon(X, \tilde{\rho}_F)\le N_\varepsilon(X, \rho_{KF})$.   Thus
$$\varlimsup_F\frac{\log N_\varepsilon(X, \tilde{\rho}_F)}{|F|}\le \varlimsup_F\frac{\log N_\varepsilon(X, \rho_{KF})}{|F|}= \varlimsup_F\frac{\log N_\varepsilon(X, \rho_{KF})}{|KF|}\le  \varlimsup_F\frac{\log N_\varepsilon(X, \rho_F)}{|F|}.$$
Therefore
\begin{align*}
\mdim_\rM(X, \tilde{\rho})&=\varliminf_{\varepsilon\to 0}\frac{1}{|\log \varepsilon|}\varlimsup_F\frac{\log N_\varepsilon(X, \tilde{\rho}_F)}{|F|}\\
&\le \varliminf_{\varepsilon\to 0}\frac{1}{|\log \varepsilon|}\varlimsup_F\frac{\log N_\varepsilon(X, \rho_F)}{|F|}
=\mdim_\rM(X, \rho).
\end{align*}
Consequently, $\mdim_\rM(X, \rho)=\mdim_\rM(X, \tilde{\rho})$.

Since $\mdim(X)\le \mdim_\rM(X, \tilde{\rho})$, we get $\mdim(X)\le \mdim_\rM(X, \rho)$.
\end{proof}

We prove Theorem~\ref{T-metric mdim} first for finitely presented modules, in Lemma~\ref{L-finitely presented case metric mdim}.
We need the following well-known fact, which can be proved by a simple volume comparison argument (see for example the proof of \cite[Lemma 4.10]{Pisier}).

\begin{lemma} \label{L-volume to covering}
Let $V$ be a finite-dimensional normed space over $\Rb$. Let $\varepsilon>0$. Then any $\varepsilon$-separated subset of the unit ball of $V$ has cardinality at most
$(1+\frac{2}{\varepsilon})^{\dim_\Rb (V)}$.
\end{lemma}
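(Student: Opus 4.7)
The plan is to prove this by the standard volume-packing argument. I would fix a Haar (Lebesgue) measure $\mu$ on $V$ (identifying $V$ with $\Rb^n$ for $n=\dim_\Rb V$ via any linear isomorphism), and denote by $B(x,r)$ the open ball of radius $r$ centered at $x$ with respect to the given norm. Since $\mu$ is translation-invariant and scales by $r^n$ under dilation by $r>0$, one has $\mu(B(x,r))=r^n\mu(B(0,1))$ for every $x\in V$ and every $r>0$.

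Now let $S$ be an $\varepsilon$-separated subset of the unit ball $B(0,1)$. The key observation is twofold: first, by the $\varepsilon$-separation, the open balls $\{B(x,\varepsilon/2)\}_{x\in S}$ are pairwise disjoint, since any point lying in two of them would force two points of $S$ to be within distance less than $\varepsilon$ of each other; second, by the triangle inequality, each $B(x,\varepsilon/2)$ with $x\in S\subseteq B(0,1)$ is contained in $B(0,1+\varepsilon/2)$.

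Putting these together with the scaling property of $\mu$,
\begin{equation*}
|S|\cdot(\varepsilon/2)^n\mu(B(0,1))=\sum_{x\in S}\mu(B(x,\varepsilon/2))\le \mu(B(0,1+\varepsilon/2))=(1+\varepsilon/2)^n\mu(B(0,1)).
\end{equation*}
Since $\mu(B(0,1))$ is positive and finite (the unit ball of a finite-dimensional normed space is a bounded open set with nonempty interior), dividing through gives
\begin{equation*}
|S|\le\left(\frac{1+\varepsilon/2}{\varepsilon/2}\right)^n=\left(1+\frac{2}{\varepsilon}\right)^n,
\end{equation*}
as desired. There is no real obstacle here; the only mild subtlety is that one must use the \emph{open} balls of radius $\varepsilon/2$ to get strict disjointness from the (non-strict) $\varepsilon$-separation condition, and then use a slightly larger containing ball, which is why the bound is $1+2/\varepsilon$ rather than $2/\varepsilon$.
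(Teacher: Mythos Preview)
Your argument is correct and is precisely the ``simple volume comparison argument'' the paper alludes to (the paper does not spell out a proof but refers to \cite[Lemma 4.10]{Pisier}). One cosmetic point: you write $S\subseteq B(0,1)$ with $B(0,1)$ the \emph{open} ball, whereas the lemma concerns the closed unit ball; this does not affect the estimate since $\|x\|\le 1$ still yields $B(x,\varepsilon/2)\subseteq B(0,1+\varepsilon/2)$.
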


Consider the following metric $\vartheta$ on $\Rb/\Zb$:
\begin{align} \label{E-metric circle}
\vartheta(x+\Zb, y+\Zb):=\min_{z\in \Zb}|x-y-z|.
\end{align}
For any   $\Zb\Gamma$-module $\cM$ and any $A\in \cF(\cM)$, we define a continuous pseudometric $\vartheta^A$ on $\widehat{\cM}$ by
\begin{align} \label{E-metric}
\vartheta^A(x, y):=\max_{a\in A}\vartheta(x(a), y(a)).
\end{align}
Note that $\vartheta^A$ is dynamically generating iff $\Gamma A$ separates the points of $\cM$, iff $\left<\Gamma A\right>=\cM$, i.e. $A$ generates $\cM$
as a $\Zb\Gamma$-module.

To prove the following lemma, we use a modification of the argument in the proof of \cite[Theorem 4.11]{CL}.

\begin{lemma} \label{L-finitely presented case metric mdim}
Let $f\in M_{m, n}(\Zb\Gamma)$ for some $m, n\in \Nb$, and $\cM=(\Zb\Gamma)^{1\times n}/(\Zb\Gamma)^{1\times m}f$.
Denote by $A$ the image of the canonical generators of $(\Zb\Gamma)^{1\times n}$ under the natural homomorphism $(\Zb\Gamma)^{1\times n}\rightarrow \cM$.
Then
$$\mdim(\widehat{\cM})=\mdim_\rM(\widehat{\cM}, \vartheta^A).$$
\end{lemma}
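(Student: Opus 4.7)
The proof naturally splits into two inequalities. For $\mdim(\widehat{\cM}) \le \mdim_\rM(\widehat{\cM}, \vartheta^A)$, I will observe that the pseudometric $\vartheta^A$ is dynamically generating: since $A$ generates $\cM$ as a $\Zb\Gamma$-module, the set $\Gamma A$ generates $\cM$ as an abelian group and hence separates points in $\widehat{\cM}$. Lemma~\ref{L-pseudometric to metric} then supplies the inequality. The main task is therefore the reverse inequality $\mdim_\rM(\widehat{\cM}, \vartheta^A) \le \alpha$, where $\alpha := \mrank(\cM) = \mdim(\widehat{\cM})$ by Theorem~\ref{T-main}.

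Write $\sM_F := \left<F^{-1}A\right>$, of rank $r_F$, so $\alpha = \lim_F r_F/|F|$. The formula $\vartheta^A_F(x,y) = \max_{b \in F^{-1}A}\vartheta(x(b),y(b))$ shows that $\vartheta^A_F$ factors through the restriction surjection $\widehat{\cM} \twoheadrightarrow Y_F := \widehat{\sM_F}$, giving $N_\varepsilon(\widehat{\cM}, \vartheta^A_F) = N_\varepsilon(Y_F, d_F)$ for $d_F$ the sup metric inherited from the embedding $Y_F \hookrightarrow \Tb^{F^{-1}A}$ (with $\Tb=\Rb/\Zb$ equipped with $\vartheta$). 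The group $Y_F$ is a compact abelian Lie group of dimension $r_F$; lifting it to its preimage $\widetilde{Y_F} \subseteq \Rb^{F^{-1}A}$, the identity component is a linear subspace $V_F$ of dimension $r_F$, the lattice $\Lambda_F := V_F \cap \Zb^{F^{-1}A}$ is full-rank in $V_F$, and $Y_F^0 = V_F/\Lambda_F$, while $|Y_F/Y_F^0| = |{\rm tors}(\sM_F)|$ by Pontryagin duality. Choosing a fundamental domain for $\Lambda_F$ in $V_F$ of sup-diameter $C_F$, every point of $Y_F^0$ lifts into $V_F \cap [-C_F, C_F]^{F^{-1}A}$, which sits inside the $C_F$-ball of the $r_F$-dimensional normed space $(V_F, \|\cdot\|_\infty)$; applying Lemma~\ref{L-volume to covering} and summing over cosets of $Y_F^0$ produces
\[ N_\varepsilon(Y_F, d_F) \le |{\rm tors}(\sM_F)| \cdot (1 + 2C_F/\varepsilon)^{r_F}. \]

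The central obstacle is a uniform estimate $|{\rm tors}(\sM_F)| \cdot C_F^{r_F} \le D^{|F|}$ for some constant $D = D(f) \ge 1$: a statement about the integer lattice structure of the relation kernel $K_F := \ker(\Zb^{F^{-1}A} \twoheadrightarrow \sM_F)$, whose generators come from $\Zb\Gamma$-linear combinations of rows of $f$ constrained to have support in $F^{-1}$. The invariant factors of $K_F \hookrightarrow K_F^{\rm sat}$ govern $|{\rm tors}(\sM_F)|$, while the sup-norms of a Minkowski-reduced basis of $\Lambda_F$ govern $C_F$; both should be bounded exponentially in $|F|$ in terms of the $\ell^\infty$-coefficients of $f$, modifying the argument in the proof of \cite[Theorem~4.11]{CL}. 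Granting this estimate, for $\varepsilon \in (0,1)$ one has $\log(1+2C_F/\varepsilon) \le \log(4C_F) + \log(1/\varepsilon)$, so
\[ \frac{\log N_\varepsilon(\widehat{\cM}, \vartheta^A_F)}{|F|} \le \log D + \frac{r_F}{|F|}\bigl(\log 4 + \log(1/\varepsilon)\bigr). \]
Taking $\varlimsup_F$, dividing by $|\log\varepsilon|$, and letting $\varepsilon \to 0$ yields $\mdim_\rM(\widehat{\cM}, \vartheta^A) \le \alpha$, completing the proof.
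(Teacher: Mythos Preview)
Your reduction to $N_\varepsilon(Y_F,d_F)$ via the factorisation of $\vartheta^A_F$ through $\widehat{\sM_F}$ is correct, and the packing bound $N_\varepsilon(Y_F,d_F)\le |{\rm tors}(\sM_F)|\,(1+2C_F/\varepsilon)^{r_F}$ is fine once you have a fundamental domain of sup-diameter $C_F$. The problem is that you then explicitly \emph{grant} the estimate $|{\rm tors}(\sM_F)|\cdot C_F^{\,r_F}\le D^{|F|}$ without proving it. This is not a minor technicality: it is the entire arithmetic content of the lemma. The kernel $K_F=(\Zb\Gamma)^{1\times m}f\cap \Zb[F^{-1}]^{1\times n}$ is \emph{not} generated by the obvious local relations (rows of $f$ translated into $F^{-1}$); there can be global elements of $(\Zb\Gamma)^{1\times m}f$ whose support collapses into $F^{-1}$ by cancellation. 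Controlling the invariant factors of $K_F\hookrightarrow K_F^{\rm sat}$ and the successive minima of $\Lambda_F$ uniformly in $F$ from the coefficients of $f$ alone is a real problem, and your reference to \cite[Theorem~4.11]{CL} does not point to an argument that handles either quantity.

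The paper avoids the lattice analysis entirely by the pigeonhole device from \cite{CL}: lift each separated point $x\in\widehat{\cM}\subseteq((\Rb/\Zb)^{n\times 1})^\Gamma$ to $\tilde x\in([-1/2,1/2)^{n\times 1})^\Gamma$, restrict to $x'=\tilde x|_F$, and observe that on the interior $F'=\{s\in F:K^{-1}s\subseteq F\}$ the vector $fx'$ takes integer values bounded by $\|f\|_1/2$. There are at most $(\|f\|_1+1)^{m|F'|}$ possibilities for $fx'|_{F'}$, and within one pigeonhole the differences $x'-y'$ lie in the \emph{real} subspace $V=\{v\in(\Rb[F])^{n\times 1}:fv=0\text{ on }F'\}$, whose dimension is at most $m|KF\setminus F'|+\dim_\Rb(\ker f\cap(\Rb[F])^{n\times 1})$. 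Lemma~\ref{L-volume to covering} then gives the packing bound, and Lemma~\ref{L-Elek} converts $\dim_\Rb(\ker f\cap(\Rb[F])^{n\times 1})/|F|$ into $\tr_{\cN\Gamma}P_f=\mdim(\widehat{\cM})$. The point is that the pigeonhole factor $(\|f\|_1+1)^{m|F'|}$ is manifestly $\le D^{|F|}$ for $D=(\|f\|_1+1)^m$, and it absorbs both the torsion and the lattice-geometry contributions in one stroke, with no need to analyse $\sM_F$ at all.
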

\begin{proof} Let $\ker f$ and $P_f$ be as at the beginning of Section~\ref{SS-finitely presented case}.
By Theorem~\ref{T-main} and Lemma~\ref{L-ker and dim} one has
$\tr_{\cN\Gamma}P_f=\mdim(\widehat{\cM})$. Since by Lemma~\ref{L-pseudometric to metric} $\mdim(\widehat{\cM})\le \mdim_\rM(\widehat{\cM}, \vartheta^A)$, it suffices to show  $\mdim_\rM(\widehat{\cM}, \vartheta^A)\le \tr_{\cN\Gamma}P_f$.

We identify $\widehat{(\Zb\Gamma)^{1\times n}}$ with $((\Rb/\Zb)^{n\times 1})^\Gamma=((\Rb/\Zb)^\Gamma)^{n\times 1}$ naturally via the pairing
$(\Zb\Gamma)^{1\times n}\times ((\Rb/\Zb)^\Gamma)^{n\times 1}\rightarrow \Rb/\Zb$ given by
$$\left<g, x\right>=(gx)_e,$$
where $gx\in (\Rb/\Zb)^\Gamma$ is defined similar to the product in $\Zb\Gamma$:
$$(gx)_t=\sum_{1\le j\le n}\sum_{s\in \Gamma}g_{j, s}x_{j, s^{-1}t}.$$
Via the quotient homomorphism $(\Zb\Gamma)^{1\times n}\rightarrow \cM$, we shall identify $\widehat{\cM}$ with a closed subset of $\widehat{(\Zb\Gamma)^{1\times n}}=((\Rb/\Zb)^{n\times 1})^\Gamma$.
It is easily checked that
$$\widehat{\cM}=\{x\in ((\Rb/\Zb)^{n\times 1})^\Gamma: fx=0 \mbox{ in } ((\Rb/\Zb)^{m\times 1})^\Gamma\},$$
and the action $\sigma$ of $\Gamma$ on $\widehat{\cM}$ is given by right shift:
$$(\sigma_s(x))_{j, t}=x_{j, ts}.$$
Furthermore, for any $x, y\in \widehat{\cM}$,
$$\vartheta^A(x, y)=\max_{1\le j\le n}\vartheta(x_{j, e}, y_{j, e}),$$
and hence for any $F\in \cF(\Gamma)$,
$$ \vartheta^A_F(x, y)=\max_{s\in F}\max_{1\le j\le n}\vartheta(x_{j, s}, y_{j, s}).$$

Denote by $K$  the support of $f$ as an $M_{m, n}(\Zb)$-valued function on $\Gamma$.

Let $0<\varepsilon<1$. Let $F\in \cF(\Gamma)$. Set $F'=\{s\in F: K^{-1}s\subseteq F\}$. Take a $(\vartheta^A_F, \varepsilon)$-separated subset $\cW$ of $\widehat{\cM}$ with
$$|\cW|=N_\varepsilon(\widehat{\cM}, \vartheta^A_F).$$
For each $x\in \cW$, take $\tilde{x}\in ([-1/2, 1/2)^{n\times 1})^\Gamma$ such that $x_s=\tilde{x}_s+\Zb^{n\times 1}$ for every $s\in \Gamma$. Then $f\tilde{x}\in (\Zb^{m\times 1})^\Gamma$ for every $x\in \cW$.

Set
$$\|f\|_1=\sum_{1\le j\le m}\sum_{1\le k\le n}\sum_{s\in \Gamma}|f_{j, k, s}|.$$
For any $y'\in (\ell^\infty_\Rb(\Gamma))^{n\times 1}$, set
$$ \|y'\|_\infty=\max_{1\le j\le n}\sup_{s\in \Gamma}|y'_{j, s}|.$$
Then $\|fy'\|_\infty\le \|f\|_1 \|y'\|_\infty$ for every $y'\in (\ell^\infty_\Rb(\Gamma))^{n\times 1}$.

Denote by $p_F$ the restriction map $(\ell^\infty_{\Rb}(\Gamma))^{n\times 1}\rightarrow (\Rb[F])^{n\times 1}$. Let $x\in \cW$. Set $x'=p_F(\tilde{x})\in ([-1/2, 1/2)^{n\times 1})^F$. Note that $fx'=f\tilde{x}$ on $F'$, and $\|fx'\|_\infty\le \|f\|_1\|x'\|_\infty\le \|f\|_1/2$.
Thus $fx'$ takes values in $(\Zb\cap [-\|f\|_1/2, \|f\|_1/2])^{m\times 1}$ on $F'$. Therefore we can find some $\cW_1\subseteq \cW$ and $y\in \cW_1$ such that
$$|\cW|\le |\cW_1|(\|f\|_1+1)^{m|F'|}$$
and $fx'=fy'$ on $F'$ for all $x\in \cW_1$.

Denote by $V$ the linear subspace of $(\Rb[F])^{n\times 1}$ consisting of all $v$ satisfying $fv=0$ on $F'$. Then
$fV\subseteq (\Rb[KF\setminus F'])^{m\times 1}$, and hence
$$\dim_\Rb(V)\le \dim_\Rb(fV)+\dim_\Rb(\ker f \cap V)\le m|KF\setminus F'|+\dim_\Rb(\ker f\cap (\Rb[F])^{n\times 1}).$$
The set $\{x'-y': x\in \cW_1\}$ is contained in the unit ball of $V$ under the supremum norm. For any
distinct $x, z\in \cW_1$, one has
$$ \|(x'-y')-(z'-y')\|_\infty=\|x'-z'\|_\infty\ge \vartheta^A_F(x, z)\ge \varepsilon.$$
By Lemma~\ref{L-volume to covering} we have
$$ |\cW_1|\le (1+\frac{2}{\varepsilon})^{\dim_\Rb(V)}.$$
Therefore
$$ N_\varepsilon(\widehat{\cM}, \vartheta^A_F)=|\cW|\le (1+\frac{2}{\varepsilon})^{m|KF\setminus F'|+\dim_\Rb(\ker f\cap (\Rb[F])^{n\times 1})}(\|f\|_1+1)^{m|F'|}.$$

Since $f$ has real coefficients, for any $x, y\in (\ell^2_\Rb(\Gamma))^{n\times 1}$, one has $x+yi\in \ker f$ if and only if $x, y\in \ker f$. Thus
$ \dim_{\Cb}(\ker f\cap (\Cb[F])^{n\times 1})=\dim_\Rb(\ker f\cap (\Rb[F])^{n\times 1})$ for every $F\in \cF(\Gamma)$, and hence from Lemma~\ref{L-Elek} we get
\begin{align} \label{E-Elek}
\tr_{\cN\Gamma}P_f=\lim_F\frac{\dim_{\Rb}(\ker f\cap (\Rb[F])^{n\times 1})}{|F|}.
\end{align}

Now we have
\begin{eqnarray*}
& &\varlimsup_F\frac{\log N_\varepsilon(\widehat{\cM}, \vartheta^A_F)}{|F|}\\
&\le& \varlimsup_F\frac{(m|KF\setminus F'|+\dim_\Rb(\ker f\cap (\Rb[F])^{n\times 1}))\log (1+2\varepsilon^{-1})+m|F'|\log (\|f\|_1+1)}{|F|}\\
&\overset{\eqref{E-Elek}}=&\tr_{\cN\Gamma}P_f \cdot \log (1+2\varepsilon^{-1})+m\log (\|f\|_1+1).
\end{eqnarray*}
Therefore
\begin{align*}
\mdim_\rM(\widehat{\cM}, \vartheta)&=\varliminf_{\varepsilon\to 0}\frac{1}{|\log \varepsilon|}\varlimsup_F\frac{\log N_\varepsilon(\widehat{\cM}, \vartheta^A_F)}{|F|}\\
&\le \varliminf_{\varepsilon\to 0}\frac{\tr_{\cN\Gamma}P_f\cdot \log (1+2\varepsilon^{-1})+m\log (\|f\|_1+1)}{|\log \varepsilon|}=\tr_{\cN\Gamma}P_f.
\end{align*}
\end{proof}

Next we prove Theorem~\ref{T-metric mdim} for finitely generated modules.

\begin{lemma} \label{L-finitely generated case metric mdim}
Let $\cM$ be a finitely generated $\Zb\Gamma$-module, and $A$ be a finite generating subset of $\cM$.
Then
$$\mdim(\widehat{\cM})=\mdim_\rM(\widehat{\cM}, \vartheta^A).$$
\end{lemma}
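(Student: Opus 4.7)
My plan is to reduce the finitely generated case to the finitely presented case of Lemma~\ref{L-finitely presented case metric mdim} by a natural approximation argument. Write $n = |A|$ and express $\cM$ as $(\Zb\Gamma)^{1\times n}/\cM_\infty$ where $A$ is the image of the canonical generators of $(\Zb\Gamma)^{1\times n}$, and $\cM_\infty$ is some $\Zb\Gamma$-submodule. Write $\cM_\infty = \bigcup_{j\in J}\cM_j$ as an increasing union of finitely generated submodules (with $\infty \notin J$), and set $\cM^{(j)} = (\Zb\Gamma)^{1\times n}/\cM_j$. Each $\cM^{(j)}$ is finitely presented, and if $A^{(j)}$ denotes the image of the canonical generators of $(\Zb\Gamma)^{1\times n}$ in $\cM^{(j)}$, then by Lemma~\ref{L-finitely presented case metric mdim} we have $\mdim(\widehat{\cM^{(j)}}) = \mdim_\rM(\widehat{\cM^{(j)}}, \vartheta^{A^{(j)}})$.

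The surjection $\cM^{(j)} \twoheadrightarrow \cM$ dually gives a $\Gamma$-equivariant closed embedding $\widehat{\cM} \hookrightarrow \widehat{\cM^{(j)}}$, under which the pseudometric $\vartheta^{A^{(j)}}$ restricts exactly to $\vartheta^A$ (since the generators correspond). Because any $(\vartheta^A_F, \varepsilon)$-separated subset of $\widehat{\cM}$ is automatically $(\vartheta^{A^{(j)}}_F, \varepsilon)$-separated in $\widehat{\cM^{(j)}}$, we get $N_\varepsilon(\widehat{\cM}, \vartheta^A_F) \le N_\varepsilon(\widehat{\cM^{(j)}}, \vartheta^{A^{(j)}}_F)$ for every $F$ and $\varepsilon$, hence
$$ \mdim_\rM(\widehat{\cM}, \vartheta^A) \le \mdim_\rM(\widehat{\cM^{(j)}}, \vartheta^{A^{(j)}}) = \mdim(\widehat{\cM^{(j)}}). $$

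Now I take the limit along $j$. By Theorem~\ref{T-main}, $\mdim(\widehat{\cM^{(j)}}) = \vrank(\cM^{(j)}) = \vrank((\Zb\Gamma)^{1\times n}/\cM_j)$, and Lemma~\ref{L-decreasing limit vrank} gives
$$ \lim_{j\to\infty} \vrank((\Zb\Gamma)^{1\times n}/\cM_j) = \vrank((\Zb\Gamma)^{1\times n}/\cM_\infty) = \vrank(\cM) = \mdim(\widehat{\cM}), $$
where the last equality uses Theorem~\ref{T-main} once more. Combining these gives $\mdim_\rM(\widehat{\cM}, \vartheta^A) \le \mdim(\widehat{\cM})$.

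For the reverse inequality, I invoke Lemma~\ref{L-pseudometric to metric}: since $A$ generates $\cM$ as a $\Zb\Gamma$-module, $\Gamma A$ separates the points of $\cM$, so $\vartheta^A$ is dynamically generating on $\widehat{\cM}$, and hence $\mdim(\widehat{\cM}) \le \mdim_\rM(\widehat{\cM}, \vartheta^A)$. Combining the two inequalities yields the desired equality. The only mildly delicate point is the monotone behavior of von Neumann--L\"{u}ck rank under decreasing limits, but that is precisely what Lemma~\ref{L-decreasing limit vrank} was set up for; everything else is a straightforward comparison of separated sets under an equivariant isometric embedding.
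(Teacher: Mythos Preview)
Your proof is correct and follows essentially the same approach as the paper: approximate $\cM_\infty$ by finitely generated submodules, apply the finitely presented case, compare separated sets via the isometric embedding $\widehat{\cM}\hookrightarrow\widehat{\cM^{(j)}}$, and finish with Lemma~\ref{L-pseudometric to metric}. The only cosmetic difference is that the paper passes to the limit via mean rank (Theorem~\ref{T-mdim vs mrank} and Lemma~\ref{L-decreasing limit mrank}) whereas you use von Neumann--L\"uck rank (Theorem~\ref{T-main} and Lemma~\ref{L-decreasing limit vrank}); since Theorem~\ref{T-main} is already established at this point, the two routes are equivalent.
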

\begin{proof} Using $A$ we may write $\cM$ as $(\Zb\Gamma)^n/\cM_\infty$ for $n=|A|$ and some submodule  $\cM_\infty$ of $(\Zb\Gamma)^n$ such that $A$ is the image of the canonical generators of $(\Zb\Gamma)^n$ under the quotient homomorphism $(\Zb\Gamma)^n\rightarrow \cM$.

Let $\{\cM_j\}_{j\in \Nb}$ be an increasing sequence of finitely generated submodules of $\cM_\infty$ with union $\cM_\infty$. By Theorem~\ref{T-mdim vs mrank}  and Lemma~\ref{L-decreasing limit mrank}  we have
\begin{align*}
\mdim(\widehat{\cM})
=\mrank(\cM)
=\lim_{j\to \infty}\mrank((\Zb\Gamma)^n/\cM_j)
=\lim_{j\to \infty}\mdim(\widehat{(\Zb\Gamma)^n/\cM_j}).
\end{align*}
For each $j\in \Nb$, denote by $A_j$ the image of the  canonical generators of $(\Zb\Gamma)^n$ under the quotient homomorphism $(\Zb\Gamma)^n\rightarrow (\Zb\Gamma)^n/\cM_j$.
By Lemma~\ref{L-finitely presented case metric mdim},
one has
\begin{align*}
\mdim(\widehat{\cM})
=\lim_{j\to \infty}\mdim(\widehat{(\Zb\Gamma)^n/\cM_j})=\lim_{j\to \infty}\mdim_\rM(\widehat{(\Zb\Gamma)^n/\cM_j}, \vartheta^{A_j}).
\end{align*}
From the quotient homomorphism $(\Zb\Gamma)^n/\cM_j\rightarrow (\Zb\Gamma)^n/\cM_\infty=\cM$, we may identify $\widehat{\cM}$ with a closed subgroup of $\widehat{(\Zb\Gamma)^n/\cM_j}$. Note that $\vartheta^{A_j}$ restricts to $\vartheta^A$ on $\widehat{\cM}$. Thus
 $\mdim_\rM(\widehat{\cM}, \vartheta^A)\le \mdim_\rM(\widehat{(\Zb\Gamma)^n/\cM_j}, \vartheta^{A_j})$ for every $j\in \Nb$. It follows that $\mdim_\rM(\widehat{\cM}, \vartheta^A)\le \mdim(\widehat{\cM})$.
By Lemma~\ref{L-pseudometric to metric} we have
$\mdim(\widehat{\cM})\le \mdim_\rM(\widehat{\cM}, \vartheta^A)$. Therefore $\mdim(\widehat{\cM})=\mdim_\rM(\widehat{\cM}, \vartheta^A)$.
\end{proof}

Next we discuss metric mean dimension for inverse limits. It will enable us to pass from finitely generated modules to countable modules in the proof of Theorem~\ref{T-metric mdim}.
For a sequence of topological spaces $\{X_j\}_{j\in \Nb}$ with continuous maps $\pi_j: X_{j+1}\rightarrow X_j$ for all $j\in \Nb$, the inverse limit
$\varprojlim_{j\to \infty}X_j$ is defined as the subspace of $\prod_{j\in \Nb}X_j$ consisting of all elements $(x_j)_{j\in \Nb}$ satisfying $\pi_j(x_{j+1})=x_j$ for all $j\in \Nb$.

\begin{lemma} \label{L-metric mdim of projective limit}
Let $\{X_j\}_{j\in \Nb}$ be a sequence of compact metrizable spaces carrying continuous $\Gamma$-actions and continuous  $\Gamma$-equivariant maps $\pi_j: X_{j+1}\rightarrow X_j$ for all $j\in \Nb$. Let $\rho_j$ be a continuous pseudometric on $X_j$ for each $j\in \Nb$ such that $\rho_j(\pi_j(x), \pi_j(y))\le \rho_{j+1}(x, y)$ for all $x, y\in X_{j+1}$. Then there is a decreasing sequence $\{\lambda_j\}_{j\in \Nb}$ of positive numbers such that the continuous pseudometric
on $X:=\varprojlim_{j\to \infty}X_j$ defined by
$$\rho((x_j)_j, (y_j)_j)=\max_{j\in \Nb}\lambda_j\rho_j(x_j, y_j)$$
satisfies
$$\mdim_\rM(X, \rho)\le \varliminf_{j\to \infty}\mdim_\rM(X_j, \rho_j).$$
\end{lemma}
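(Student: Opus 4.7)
The plan is to use the canonical projections $p_{j_0}:X\to X_{j_0}$ and choose the $\lambda_j$ decreasing so rapidly that for every $\varepsilon>0$ the tail $\sup_{k>j_0}\lambda_k\rho_k$ of $\rho$ is smaller than $\varepsilon$ for a suitable $j_0=j_0(\varepsilon)$. On any $(\rho_F,\varepsilon)$-separated subset $W\subseteq X$ the separation of two distinct points is then forced to be witnessed at some coordinate $k\le j_0$, and iterating the contraction $\rho_j(\pi_ju,\pi_jv)\le\rho_{j+1}(u,v)$ will show that $p_{j_0}$ is injective on $W$ with $((\rho_{j_0})_F,\varepsilon)$-separated image, giving $N_\varepsilon(X,\rho_F)\le N_\varepsilon(X_{j_0},(\rho_{j_0})_F)$.

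To turn this into the bound $\varliminf_{j\to\infty}\mdim_\rM(X_j,\rho_j)$, I would diagonalize. Set $D_j=\diam(X_j,\rho_j)$ and $g_j(\varepsilon)=\frac{1}{|\log\varepsilon|}\varlimsup_F\frac{\log N_\varepsilon(X_j,(\rho_j)_F)}{|F|}$, so $\mdim_\rM(X_j,\rho_j)=\varliminf_{\varepsilon\to 0}g_j(\varepsilon)$. First pick a decreasing sequence $\varepsilon_j\to 0$ with $\varepsilon_j<1/j$ and $g_j(\varepsilon_j)<\mdim_\rM(X_j,\rho_j)+1/j$ whenever $\mdim_\rM(X_j,\rho_j)<\infty$. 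Then set $\lambda_1\le 1$ and inductively $\lambda_{j+1}=\min\bigl(\lambda_j/2,\,\varepsilon_j/(D_{j+1}+1)\bigr)$, producing a decreasing sequence with $\lambda_kD_k\le\varepsilon_j$ for all $k>j$ and $\lambda_jD_j\to 0$, so that the sup-formula defines a continuous pseudometric $\rho$ on $X$.

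The central technical step is the inequality $N_\varepsilon(X,\rho_F)\le N_\varepsilon(X_{j_0},(\rho_{j_0})_F)$ whenever $\lambda_kD_k<\varepsilon$ for all $k>j_0$. Given a $(\rho_F,\varepsilon)$-separated $W\subseteq X$ and distinct $x,y\in W$, the identity $\rho_F(x,y)=\sup_j\lambda_j(\rho_j)_F(x_j,y_j)\ge\varepsilon$ yields some $k$ with $\lambda_k(\rho_k)_F(x_k,y_k)\ge\varepsilon$; then $\lambda_kD_k\ge\varepsilon$ forces $k\le j_0$, and iterating the hypothesis along $\pi_k,\ldots,\pi_{j_0-1}$ gives $(\rho_{j_0})_F(x_{j_0},y_{j_0})\ge(\rho_k)_F(x_k,y_k)\ge\varepsilon/\lambda_k\ge\varepsilon$ (since $\lambda_k\le\lambda_1\le 1$). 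Hence $p_{j_0}|_W$ is injective with $((\rho_{j_0})_F,\varepsilon)$-separated image. Specializing at $\varepsilon=\varepsilon_j$ with $j_0=j$ gives $\frac{1}{|\log\varepsilon_j|}\varlimsup_F\frac{\log N_{\varepsilon_j}(X,\rho_F)}{|F|}\le g_j(\varepsilon_j)$, and taking $\varliminf_{j\to\infty}$, using $\varepsilon_j\to 0$ to lower-bound the left-hand side by $\mdim_\rM(X,\rho)$, finishes the argument.

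The principal obstacle will be the interplay between the scale $\varepsilon$ and the projection index $j_0$: the bound only applies when $j_0$ exceeds a threshold that tends to infinity as $\varepsilon\to 0$, so one cannot fix $j_0$ and simply let $\varepsilon\to 0$ along a single $g_{j_0}$. The remedy is precisely the diagonal coordination of $\{\lambda_j\}$ with the $\varliminf$-witnessing scales $\varepsilon_j$ above, so that $j(\varepsilon_j)\le j$ by construction.
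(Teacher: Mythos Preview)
Your proof is correct and follows essentially the same approach as the paper: choose $\varepsilon_j$ witnessing the $\varliminf$ in each $\mdim_\rM(X_j,\rho_j)$, then choose $\lambda_j$ so that the tail of $\rho$ beyond level $j$ has diameter below $\varepsilon_j$, and compare separated sets in $X$ with separated sets in $X_j$ via the projection. The only cosmetic difference is that the paper introduces the auxiliary pseudometric $\rho_k'(x,y)=\max_{1\le j\le k}\lambda_j\rho_j(\pi_j\circ\cdots\circ\pi_{k-1}(x),\pi_j\circ\cdots\circ\pi_{k-1}(y))$ on $X_k$ and compares $N_{\varepsilon_k}(X,\rho_F)$ to $N_{\varepsilon_k}(X_k,(\rho_k')_F)$ before invoking $\rho_k'\le\rho_k$, which forces an interleaved induction (choosing $\varepsilon_k$ after $\lambda_1,\dots,\lambda_k$); your argument bypasses $\rho_k'$ by pushing the witnessing coordinate $k\le j_0$ up to level $j_0$ directly via the monotonicity hypothesis, which lets you fix all $\varepsilon_j$ first and then all $\lambda_j$.
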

\begin{proof} We shall require $\lambda_j\diam(X_j, \rho_j)<1/2^j$ for all $j\in \Nb$, which implies that $\rho$ is a continuous pseudometric on $X$.
We shall also require $\lambda_{j+1}\le \lambda_j\le 1$ for all $j\in \Nb$.

Let $k\in \Nb$. Define a continuous pseudometric $\rho_k'$ on $X_k$ by $\rho_k'(x, y)=\max_{1\le j\le k}\lambda_j\rho_j(\pi_j\circ \pi_{j+1}\circ \cdots \circ\pi_{k-1}(x), \pi_j\circ \pi_{j+1}\circ \cdots \circ \pi_{k-1}(y))$.  Then $\rho_k'\le \rho_k$.
Thus
$$\mdim_\rM(X_k, \rho_k')\le \mdim_\rM(X_k, \rho_k).$$
Then we can find some $0<\varepsilon_k<\frac{1}{k}$ such that
$$ \frac{1}{|\log \varepsilon_k|}\varlimsup_F\frac{\log N_{\varepsilon_k}(X_k, (\rho_k')_F)}{|F|}\le \mdim_\rM(X_k, \rho_k')+\frac{1}{k}\le \mdim_\rM(X_k, \rho_k)+\frac{1}{k}.$$
Denote by $\Pi_k$ the natural map $X\rightarrow X_k$. Note that
$$\rho(x, y)\le \max(\rho_k'(\Pi_k(x), \Pi_k(y)), \max_{j>k}\lambda_j\diam(X_j, \rho_j))$$
for any $x, y\in X$. It follows that for any $F\in \cF(\Gamma)$, one has
$$\rho_F(x, y)\le \max((\rho_k')_F(\Pi_k(x), \Pi_k(y)), \max_{j>k} \lambda_j\diam(X_j, \rho_j))$$
for any $x, y\in X$.
Thus for any $\varepsilon>\max_{j>k} \lambda_j\diam(X_j, \rho_j)$, if $\cW\subseteq X$ is $(\rho_F, \varepsilon)$-separated, then $\Pi_k(\cW)$ is $((\rho_k')_F, \varepsilon)$-separated.
Therefore, if $\max_{j>k} \lambda_j\diam(X_j, \rho_j)<\varepsilon_k$, then
$$  N_{\varepsilon_k}(X, \rho_F)\le N_{\varepsilon_k}(X_k, (\rho_k')_F)$$
for every $F\in \cF(\Gamma)$, and hence
\begin{align*}
\frac{1}{|\log \varepsilon_k|}\varlimsup_F\frac{\log N_{\varepsilon_k}(X, \rho_F)}{|F|}&\le \frac{1}{|\log \varepsilon_k|}\varlimsup_F\frac{\log N_{\varepsilon_k}(X_k, (\rho_k')_F)}{|F|}\\
&\le \mdim_\rM(X_k, \rho_k)+\frac{1}{k}.
\end{align*}

Now we require further that $\max_{j>k} \lambda_j\diam(X_j, \rho_j)<\varepsilon_k$ for all $k\in \Nb$. Since we can choose $\varepsilon_k$ once $\lambda_1, \dots, \lambda_k$ are given, by induction such a sequence $\{\lambda_j\}_{j\in \Nb}$ exists. Then
\begin{align*}
\mdim_\rM(X, \rho)&\le \varliminf_{k\to \infty} \frac{1}{|\log \varepsilon_k|}\varlimsup_F\frac{\log N_{\varepsilon_k}(X, \rho_F)}{|F|}\\
&\le \varliminf_{k\to \infty}(\mdim_\rM(X_k, \rho_k)+\frac{1}{k}) \\
&= \varliminf_{k\to \infty}\mdim_\rM(X_k, \rho_k).
\end{align*}
\end{proof}

We are ready to prove Theorem~\ref{T-metric mdim}.

\begin{proof}[Proof of Theorem~\ref{T-metric mdim}] By Pontryagin duality we have $X=\widehat{\cM}$ for some countable $\Zb\Gamma$-module.
List the elements of $\cM$ as $a_1, a_2, \cdots$. Set $A_j=\{a_1, \dots, a_j\}$ and denote by $\cM_j$ the submodule of $\cM$ generated by $A_j$ for each $j\in \Nb$. By Lemma~\ref{L-finitely generated case metric mdim} one has
$\mdim(\widehat{\cM_j})=\mdim_\rM(\widehat{\cM_j}, \vartheta^{A_j})$ for each $j\in \Nb$.

For each $j\in \Nb$, from the inclusion $\cM_j\hookrightarrow \cM_{j+1}$ we have a surjective $\Gamma$-equivariant continuous map $\pi_j: \widehat{\cM_{j+1}}\rightarrow \widehat{\cM_j}$.
Then $\widehat{\cM}=\varprojlim_{j\to \infty}\widehat{\cM_j}$. Note that $\vartheta^{A_{j+1}}(x, y)\ge \vartheta^{A_j}(\pi_j(x), \pi_j(y))$ for all $j\in \Nb$ and $x, y\in \widehat{\cM_{j+1}}$. By Lemma~\ref{L-metric mdim of projective limit} we can find a suitable decreasing sequence $\{\lambda_j\}_{j\in \Nb}$ of positive numbers such that for the continuous pseudometric $\rho$  on $\widehat{\cM}=\varprojlim_{j\to \infty}\widehat{\cM_j}$ defined by
$$\rho((x_j)_j, (y_j)_j):=\max_{j\in \Nb}\lambda_j\vartheta^{A_j}(x_j, y_j),$$
one has $\mdim_\rM(\widehat{\cM}, \rho)\le \varliminf_{j\to \infty}\mdim_\rM(\widehat{\cM_j}, \vartheta^{A_j})\le \sup_{j\in \Nb}\mdim_\rM(\widehat{\cM_j}, \vartheta^{A_j})$.
Clearly
$$\rho(x, y)=\max_{j\in \Nb}\lambda_j\max_{1\le k\le j}\vartheta(x(a_k), y(a_k))=\max_{j\in \Nb}\lambda_j\vartheta(x(a_j), y(a_j))$$
for all $x, y\in \widehat{\cM}$. Thus $\rho$ is a compatible translation-invariant metric on $\widehat{\cM}$, and hence $\mdim(\widehat{\cM})\le \mdim_\rM(\widehat{\cM}, \rho)$.

Now we have \begin{align*}
\mdim(\widehat{\cM})\le \mdim_\rM(\widehat{\cM}, \rho)\le \sup_{j\in \Nb}\mdim_\rM(\widehat{\cM_j}, \vartheta^{A_j})=\sup_{j\in \Nb}\mdim(\widehat{\cM_j})\le\mdim(\widehat{\cM}),
\end{align*}
where the last inequality follows from Corollary~\ref{C-addition mdim}.
Therefore $\mdim(\widehat{\cM})=\mdim_\rM(\widehat{\cM}, \rho)$.
\end{proof}

\section{Range of mean dimension} \label{S-range}

In this section we use positive results on the strong Atiyah conjecture to discuss the range of mean dimension for algebraic actions.

Let $\Gamma$ be a discrete group.
Denote by $\cH(\Gamma)$ the subgroup of $\Qb$ generated by $|H|^{-1}$ for $H$ ranging over all finite subgroups of $\Gamma$. Motivated by a question of Atiyah on rationality of $L^2$-Betti numbers, one has the following {\it strong Atiyah conjecture} \cite[Conjecture 10.2]{Luck}:

\begin{conjecture}\label{C-Atiyah}
For any discrete (not necessarily amenable) group $\Gamma$ and any $f\in M_{m,n}(\Cb\Gamma)$ for some $m,n \in \Nb$,
denoting by $P_f$ the kernel of the orthogonal projection from $(\ell^2(\Gamma))^{n\times 1}$ to $\ker f$, one has
$$\tr_{\cN\Gamma}P_f\in \cH(\Gamma).$$
\end{conjecture}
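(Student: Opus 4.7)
Since this statement is presented as a \emph{conjecture} and, as the introduction notes, is known to fail in general (see \cite{Austin, DS, Grabowski1, Grabowski2, GLSZ, GZ, LWa, PSZ}), no unconditional proof is possible. The plan I would pursue therefore is the standard strategy that succeeds in the verified cases of Linnell \cite{Linnell} and L\"{u}ck \cite{Luck}, which proceeds by constructing a sufficiently tame ring extension of $\Cb\Gamma$ sitting inside the algebra $\cU(\Gamma)$ of unbounded operators affiliated with $\cN\Gamma$.

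The first step is to reinterpret $\tr_{\cN\Gamma}P_f$ algebraically. Setting $\cM = (\Cb\Gamma)^{1\times n}/(\Cb\Gamma)^{1\times m}f$, the polar-decomposition argument of Lemma~\ref{L-ker and dim} adapted verbatim to the complex setting yields
$$\tr_{\cN\Gamma}P_f = n - \dim_{\cN\Gamma}\bigl((\cN\Gamma)^{1\times m}f\bigr) = \dim_{\cN\Gamma}\bigl(\cN\Gamma\otimes_{\Cb\Gamma}\cM\bigr).$$
The goal is thus to show $\dim_{\cN\Gamma}(\cN\Gamma\otimes_{\Cb\Gamma}\cM)\in \cH(\Gamma)$ for every finitely presented $\Cb\Gamma$-module $\cM$.

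For torsion-free $\Gamma$ in Linnell's class $\cC$ (the smallest class containing free groups that is closed under directed unions and under extensions by elementary amenable groups), where $\cH(\Gamma)=\Zb$, the key is to produce a skew field $\cD(\Gamma)$ with $\Cb\Gamma\subseteq \cD(\Gamma)\subseteq \cU(\Gamma)$ that is flat over $\Cb\Gamma$ and dimension-compatible with $\cN\Gamma$, so that
$$\dim_{\cN\Gamma}\bigl(\cN\Gamma\otimes_{\Cb\Gamma}\cM\bigr)=\dim_{\cD(\Gamma)}\bigl(\cD(\Gamma)\otimes_{\Cb\Gamma}\cM\bigr)\in\Zb.$$
The construction of $\cD(\Gamma)$ would proceed by transfinite induction through $\cC$, using Ore-type localization for elementary amenable quotients, Moody's induction theorem for the group-extension step, and a continuity argument in the spirit of Theorem~\ref{T-dim basic}(2) for directed unions. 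For groups with torsion of bounded exponent, one upgrades the target from $\Zb$ to $\cH(\Gamma)$ by tracking the matrix-block contributions of each finite subgroup $H\le\Gamma$, which force denominators dividing $|H|$.

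The main obstacle, and the source of the conjectural status, is that the division ring $\cD(\Gamma)$ does not exist in full generality: for the lamplighter group and the related examples in the cited counterexample papers, explicit elements of $\Cb\Gamma$ give rise to operators in $\cN\Gamma$ whose spectral projections have irrational trace, so no localization inside $\cU(\Gamma)$ can have all dimensions in $\cH(\Gamma)$. Any prospective general proof would therefore have to bypass the division-ring route entirely and explain the appearance of denominators in $\cH(\Gamma)$ via some intrinsic feature of $\Cb\Gamma$-modules that no current technique seems able to isolate.
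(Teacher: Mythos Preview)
Your assessment is correct and matches the paper's own treatment: the statement is labeled as a conjecture, the paper gives no proof, and immediately after stating it the paper notes that counterexamples have been found (citing the same list of references you invoke). There is nothing to compare on the proof side because the paper does not attempt one; your recognition that no unconditional proof is possible is exactly the point.

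Your additional sketch of the Linnell--L\"{u}ck strategy for the verified cases (the division-ring $\cD(\Gamma)\subseteq\cU(\Gamma)$, Ore localization, Moody induction, and the passage from $\Zb$ to $\cH(\Gamma)$ via finite-subgroup denominators) is accurate and goes well beyond what the paper itself records: the paper simply quotes Linnell's result as Theorem~\ref{T-Linnell} without outlining its proof, and then uses it as a black box together with Lemma~\ref{L-ker and dim} to deduce Theorem~\ref{T-vrank rational}. So your write-up is not merely consistent with the paper but more informative about the underlying mechanism.
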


A counterexample to Conjecture~\ref{C-Atiyah} has been found by Grigorchuk and \.{Z}uk \cite{GZ} (see also \cite{Austin, DS, Grabowski1, Grabowski2, GLSZ, LWa, PSZ}).
On the other hand, Conjecture~\ref{C-Atiyah} has been verified for various groups. In particular, Linnell has proved the following theorem \cite[Theorem 1.5]{Linnell} \cite[Theorem 10.19]{Luck}. Recall that the class of {\it elementary amenable groups} is the smallest class of groups containing all finite groups and abelian groups and being closed under taking subgroups, quotient groups, group extensions and directed unions \cite{Chou}.

\begin{theorem} \label{T-Linnell}
Let $\cC$ be the smallest class of groups containing all free groups and being closed under directed unions and extensions with elementary amenable quotients.
If a (not necessarily amenable) group $\Gamma$ belongs to $\cC$ and there is an upper bound on the orders of the finite subgroups of $\Gamma$, then Conjecture~\ref{C-Atiyah} holds for $\Gamma$.
\end{theorem}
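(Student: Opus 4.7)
The plan is to prove the theorem by transfinite induction through the class $\cC$, showing at each stage that a suitable division closure of $\Cb\Gamma$ inside the algebra $\cU(\Gamma)$ of operators affiliated to $\cN\Gamma$ is a semisimple Artinian ring whose Wedderburn factors have bounded matrix sizes. More precisely, let $\cD(\Gamma)$ denote the smallest subring of $\cU(\Gamma)$ containing $\Cb\Gamma$ and closed under taking inverses of its units in $\cU(\Gamma)$. Writing $L = L(\Gamma)$ for the least common multiple of the orders of finite subgroups of $\Gamma$ (finite by hypothesis), the key structural statement I would try to establish is that $\cD(\Gamma)$ is semisimple Artinian, and that each simple factor appearing in its Wedderburn decomposition is a matrix ring $M_{n_i}(D_i)$ over a skew field $D_i$ with $n_i$ dividing $L$. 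Once this is in hand, for any idempotent $P\in M_n(\cD(\Gamma))$ one sees that $\tr_{\cN\Gamma}P$ is a nonnegative rational combination of the numbers $n_i^{-1}\dim_{D_i}(\text{image})$, hence lies in $L^{-1}\Zb\subseteq \cH(\Gamma)$. Specializing to $P = P_f$ for $f\in M_{m,n}(\Cb\Gamma)$ (which lies in $M_n(\cD(\Gamma))$ because $\cD(\Gamma)$ is closed under polar decomposition in $\cU(\Gamma)$), this gives the conjecture.

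The induction would proceed as follows. First I would establish the base case: $\cD(\Fb)$ is a skew field for every free group $\Fb$. The key point here is Cohn's theorem that $\Cb\Fb$ is a fir (free ideal ring), from which one deduces by a classical argument that $\Cb\Fb$ embeds in a universal skew field of fractions. The nontrivial additional input is identifying this skew field with $\cD(\Fb)$ inside $\cU(\Fb)$; this boils down to verifying that every nonzero matrix over $\Cb\Fb$ that is full (in the fir sense) induces an injective operator on $(\ell^2\Fb)^{n\times 1}$, which in turn follows from a Magnus--Fox--type expansion argument together with approximation by finite-dimensional representations of residually finite quotients of $\Fb$. Next I would handle extensions: given $1\to N\to \Gamma\to Q\to 1$ with $Q$ elementary amenable and the structural statement known for $N$, I would express $\cN\Gamma$ as a crossed product $\cN N\rtimes Q$, transfer the Wedderburn decomposition of $\cD(N)$ to an equivariant decomposition, and use an induction on the derived/elementary-amenable hierarchy of $Q$ to produce the decomposition of $\cD(\Gamma)$, with the bound on matrix sizes multiplied by the orders of finite subgroups of $Q$; the bound on orders of finite subgroups of $\Gamma$ keeps this finite. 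Finally for directed unions $\Gamma = \varinjlim \Gamma_i$, I would exploit that $\cD(\Gamma_i)\subseteq \cD(\Gamma)$ and that the trace of any fixed projection $P_f$ with $f\in M_{m,n}(\Cb\Gamma)$ is already computed inside some $\cD(\Gamma_i)$, since $f\in M_{m,n}(\Cb\Gamma_i)$ for some $i$, and $\cH(\Gamma_i)\subseteq \cH(\Gamma)$.

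The main obstacle is clearly the base case for free groups and, almost as seriously, the propagation through extensions. For the free group step, the delicate issue is going from the purely algebraic universal skew field of fractions of the fir $\Cb\Fb$ to an actual embedding into $\cU(\Fb)$; this requires checking that taking inverses of non-zero-divisors in $\cU(\Fb)$ stays inside the algebraic division closure, which is where the analytic content enters and where one uses strong residual properties of $\Fb$ (residual finiteness, and faithfulness of the trace under finite-dimensional approximation). For extensions, the obstacle is that a crossed product of a semisimple Artinian ring by an infinite group need not be semisimple Artinian at the level of $\Cb\Gamma$; one has to take the division closure \emph{after} forming the crossed product, and then argue that the analytic completion in $\cU(\Gamma)$ really produces the Wedderburn factors predicted by the algebra, with the matrix sizes controlled by the finite subgroup bound. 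I would expect the cleanest route to both obstacles to be the technology developed by Linnell for manipulating the algebra $\cU(\Gamma)$, in particular the fact that $\cU(\Gamma)$ is von Neumann regular and self-injective, which makes the division closure well-behaved under the operations appearing in the induction.
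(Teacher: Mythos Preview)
The paper does not prove this theorem. It is quoted as a known result of Linnell, with references to \cite[Theorem 1.5]{Linnell} and \cite[Theorem 10.19]{Luck}; the paper uses it as a black box in the proof of Theorem~\ref{T-vrank rational}. So there is no ``paper's own proof'' to compare against.

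That said, your outline is broadly faithful to Linnell's actual argument: one works with the division closure $\cD(\Gamma)$ of $\Cb\Gamma$ inside the algebra $\cU(\Gamma)$ of affiliated operators, proves by transfinite induction over the class $\cC$ that $\cD(\Gamma)$ is semisimple Artinian with matrix sizes dividing $L(\Gamma)$, and deduces the trace statement from the Wedderburn decomposition. The base case for free groups does rest on the fir structure of $\Cb\Fb$ and a universal-field-of-fractions argument, and the extension step does go through crossed products combined with an induction over the elementary amenable hierarchy (using Moody's induction theorem, which you do not mention but which is the substantive tool here). Your description of the obstacles is accurate; filling in either the free-group embedding into $\cU(\Fb)$ or the extension step rigorously is a serious undertaking and is the content of Linnell's paper, not something one should expect to reproduce here.

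One technical inaccuracy: $P_f$ lies in $M_n(\cN\Gamma)$, not a priori in $M_n(\cD(\Gamma))$, and the argument does not proceed by showing $P_f\in M_n(\cD(\Gamma))$. Rather, one uses that $\cU(\Gamma)$ is von Neumann regular to identify $\tr_{\cN\Gamma}P_f$ with the $\cU(\Gamma)$-dimension of the cokernel of $f$ acting on $\cU(\Gamma)^n$, and then the semisimple Artinian structure of $\cD(\Gamma)$ (together with flatness of $\cU(\Gamma)$ over $\cD(\Gamma)$) forces this dimension into $L(\Gamma)^{-1}\Zb$.
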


The next result follows from Theorem~\ref{T-Linnell} and \cite[Lemma 10.10]{Luck}. For the convenience of the reader, we give a proof here.

\begin{theorem} \label{T-vrank rational}
Let $\Gamma$ be an elementary amenable group with an upper bound on the orders of the finite subgroups of $\Gamma$. For any (left) $\Zb\Gamma$-module $\cM$, one has $\vrank(\cM)\in \cH(\Gamma)\cup \{+\infty\}$.
\end{theorem}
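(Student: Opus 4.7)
The plan is to bootstrap from the finitely presented case through finitely generated modules to arbitrary modules, using the strong Atiyah conjecture together with the limit/exactness properties of $\vrank$ already established, and the crucial discreteness of $\cH(\Gamma)$ provided by the hypothesis on finite subgroups.

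First, I would observe that $\Gamma$ lies in Linnell's class $\cC$: the trivial group is free, and the closure of $\cC$ under extensions with elementary amenable quotients implies by induction on the construction of elementary amenable groups (or directly from directed unions of such extensions) that every elementary amenable group is in $\cC$. Hence Theorem~\ref{T-Linnell} applies to $\Gamma$. I would then record the key discreteness fact: if $L$ is an upper bound on $|H|$ for finite $H\le \Gamma$, then every $|H|^{-1}$ lies in $\frac{1}{L!}\Zb$, so $\cH(\Gamma)\subseteq \frac{1}{L!}\Zb$ is a cyclic (hence discrete) subgroup of $\Rb$.

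Next I would dispatch the finitely presented case: write $\cM=(\Zb\Gamma)^{1\times n}/(\Zb\Gamma)^{1\times m}f$ and invoke Lemma~\ref{L-ker and dim} to get $\vrank(\cM)=\tr_{\cN\Gamma}P_f$, which lies in $\cH(\Gamma)$ by the strong Atiyah conjecture applied to $\Gamma$. To pass to finitely generated modules, write $\cM=(\Zb\Gamma)^{1\times n}/\cM_\infty$ and express $\cM_\infty$ as a directed union $\bigcup_{j}\cM_j$ of finitely generated submodules. Each $(\Zb\Gamma)^{1\times n}/\cM_j$ is finitely presented, so $\vrank((\Zb\Gamma)^{1\times n}/\cM_j)\in \cH(\Gamma)$. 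Lemma~\ref{L-decreasing limit vrank} identifies $\vrank(\cM)$ with the infimum of the non-increasing net $\vrank((\Zb\Gamma)^{1\times n}/\cM_j)$; since this net takes values in the discrete set $\cH(\Gamma)\cap [0,n]$, it must stabilize, and its limit lies in $\cH(\Gamma)$.

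Finally, for an arbitrary $\Zb\Gamma$-module $\cM$, express $\cM$ as the directed union of its finitely generated submodules $\{\cN_k\}_k$, and apply Lemma~\ref{L-increasing limit vrank} to write $\vrank(\cM)=\sup_k \vrank(\cN_k)$. By the previous step, each $\vrank(\cN_k)\in \cH(\Gamma)$. If the supremum is $+\infty$ we are done; otherwise it is finite, and the non-decreasing net $\vrank(\cN_k)$ is bounded above and takes values in the discrete set $\cH(\Gamma)$, so it stabilizes and the supremum again lies in $\cH(\Gamma)$.

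The main conceptual point — rather than an obstacle — is the monotonicity afforded by Lemmas~\ref{L-increasing limit vrank} and \ref{L-decreasing limit vrank}, which combine with the discreteness of $\cH(\Gamma)$ to force the relevant limits to be attained; without monotonicity, a generic net in a discrete subset of $\Rb$ need not have a limit in that set, so it is essential that we use these specific convergence results and not merely exactness. The only minor verification required is the embedding $\cH(\Gamma)\subseteq \frac{1}{L!}\Zb$, which is immediate from the definition.
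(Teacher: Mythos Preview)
Your proposal is correct and follows essentially the same approach as the paper: establish the finitely presented case via Lemma~\ref{L-ker and dim} and Theorem~\ref{T-Linnell}, then bootstrap to finitely generated and arbitrary modules using Lemmas~\ref{L-decreasing limit vrank} and~\ref{L-increasing limit vrank} together with the discreteness of $\cH(\Gamma)$. The paper compresses this bootstrap by simply citing ``the argument in the proof of Theorem~\ref{T-mrank vs vrank}'', whereas you spell it out explicitly and also justify the discreteness via the inclusion $\cH(\Gamma)\subseteq \frac{1}{L!}\Zb$.
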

\begin{proof} By Theorem~\ref{T-Linnell} we know that Conjecture~\ref{C-Atiyah} holds for $\Gamma$. By Lemma~\ref{L-ker and dim} we have $\vrank(\cM)\in \cH(\Gamma)$ for every finitely presented $\Zb\Gamma$-module $\cM$. Note that $\cH(\Gamma)$ is a discrete subset of $\Rb$. Then the argument in the proof of Theorem~\ref{T-mrank vs vrank} shows that $\vrank(\cM)\in \cH(\Gamma)$ for every finitely generated $\Zb\Gamma$-module $\cM$, and $\vrank(\cM)\in \cH(\Gamma)\cup \{+\infty\}$ for an arbitrary $\Zb\Gamma$-module $\cM$.
\end{proof}

Coornaert and Krieger showed that if a countable amenable group $\Gamma$ has subgroups of arbitrary large finite index, then for any $r\in [0, +\infty]$ there is a continuous action of $\Gamma$ on some compact metrizable space $X$ with $\mdim(X)=r$ \cite{CK}. Thus it is somehow surprising that
the value of the mean dimension of algebraic actions of some amenable groups is rather restricted, as the following consequence of Theorems~\ref{T-main} and \ref{T-vrank rational} shows: (see also \cite{CT} for some related discussion)

\begin{corollary} \label{C-mdim rational}
 Let $\Gamma$ be an elementary amenable group with an upper bound on the orders of the finite subgroups of $\Gamma$. For any (left) $\Zb\Gamma$-module $\cM$, one has $\mdim(\widehat{\cM}), \mrank(\cM)\in \cH(\Gamma)\cup \{+\infty\}$.
\end{corollary}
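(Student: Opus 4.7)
The plan is very short: this corollary is essentially immediate from combining the two main inputs already established. By Theorem~\ref{T-main}, for any $\Zb\Gamma$-module $\cM$ we have the chain of equalities
$$\mdim(\widehat{\cM})=\mrank(\cM)=\vrank(\cM).$$
So it suffices to show that the common value lies in $\cH(\Gamma)\cup\{+\infty\}$. But this is exactly the content of Theorem~\ref{T-vrank rational}, whose hypotheses (elementary amenable with a uniform bound on the orders of finite subgroups) coincide with the hypotheses of the corollary.

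Thus my proposed proof is to simply invoke Theorem~\ref{T-main} to identify $\mdim(\widehat{\cM})$ and $\mrank(\cM)$ with $\vrank(\cM)$, and then invoke Theorem~\ref{T-vrank rational} to locate $\vrank(\cM)$ inside $\cH(\Gamma)\cup\{+\infty\}$. There is no real obstacle here, since all the substantive work (the chain of equalities in Theorem~\ref{T-main}, the verification of the strong Atiyah conjecture for these groups via Linnell's theorem, and the passage from finitely presented to arbitrary modules using the discreteness of $\cH(\Gamma)$ together with Lemmas~\ref{L-decreasing limit vrank} and \ref{L-increasing limit vrank}) has already been absorbed into those two theorems. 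The only thing worth pausing over is that $\cH(\Gamma)$ is indeed discrete in $\Rb$ under the finite-bound-on-orders hypothesis (so suprema and infima of values in $\cH(\Gamma)$ either lie in $\cH(\Gamma)$ or equal $+\infty$), but this is already used in the proof of Theorem~\ref{T-vrank rational} and needs no repetition.
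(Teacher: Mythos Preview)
Your proposal is correct and matches the paper's own justification exactly: the paper states the corollary as an immediate consequence of Theorem~\ref{T-main} and Theorem~\ref{T-vrank rational} without giving any further argument, which is precisely what you do.
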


\section{Zero mean dimension} \label{S-zero}

Throughout this section $\Gamma$ will be a discrete countable amenable group.

For a continuous action of $\Gamma$ on a compact metrizable space $X$ and any finite open cover $\cU$ of $X$, the limit $\lim_F\frac{\log N(\cU^F)}{|F|}$ exists
by the Ornstein-Weiss lemma \cite{OW} \cite[Theorem 6.1]{LW}, where $N(\cU^F)$ denotes $\min_{\cV}|\cV|$ for $\cV$ ranging over  subcovers of $\cU^F$. The {\it topological entropy} of the action $\Gamma\curvearrowright X$, denoted by $\rh(X)$, is defined as $\sup_{\cU}\lim_F\frac{\log N(\cU^F)}{|F|}$ for $\cU$ ranging over finite open covers of $X$. Lindenstrauss and Weiss showed that if $\rh(X)<+\infty$, then $\mdim(X)=0$ \cite[Section 4]{LW}.

The following question was raised by Lindenstrauss implicitly in \cite{Lindenstrauss}:

\begin{question} \label{Q-zero}
Let $\Gamma$ act on a compact metrizable space $X$ continuously. Is it true that $\mdim(X)=0$ if and only if the action $\Gamma\curvearrowright X$ is the inverse limit of actions with finite topological entropy?
\end{question}

Lindenstrauss proved the ``if'' part \cite[Proposition 6.11]{Lindenstrauss}. For $\Gamma=\Zb$ Lindenstrauss showed that the ``only if'' part holds when the action $\Zb\curvearrowright X$ has an infinite minimal factor \cite[Proposition 6.14]{Lindenstrauss}, and  Gutman showed that the ``only if'' part holds when
$\Zb\curvearrowright X$ has the so-called {\it marker property} \cite[Definition 3.1, Theorem 8.3]{Gutman13}, in particular when $\Zb\curvearrowright X$ has an aperiodic factor $\Zb\curvearrowright Y$ such that either $Y$ is finite-dimensional or $\Zb\curvearrowright Y$ has only a countable number of closed invariant minimal subsets or $\Zb\curvearrowright Y$ has a so-called {\it compact minimal subsystems selector}
\cite[Theorem 8.4, Definition 3.8]{Gutman13}.

We shall answer Question~\ref{Q-zero} for algebraic actions of the groups in Theorem~\ref{T-vrank rational}. We start with finitely presented $\Zb\Gamma$-modules.

\begin{corollary} \label{C-entropy and mdim finitely presented}
Let $\cM$ be a finitely presented $\Zb\Gamma$-module. Then $\mdim(\widehat{\cM})=0$ if and only if $\rh(\widehat{\cM})<+\infty$.
\end{corollary}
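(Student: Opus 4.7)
The ``if'' direction is exactly the Lindenstrauss--Weiss result recalled in the paragraph immediately preceding the statement: finite topological entropy forces zero mean dimension. So the plan is to prove the converse using the finite presentation of $\cM$.

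Write $\cM=(\Zb\Gamma)^{1\times n}/(\Zb\Gamma)^{1\times m}f$ with $f\in M_{m,n}(\Zb\Gamma)$. By Theorem~\ref{T-main} combined with Lemma~\ref{L-ker and dim},
$$\mdim(\widehat{\cM})=\vrank(\cM)=\tr_{\cN\Gamma}P_f,$$
where $P_f\in M_n(\cN\Gamma)$ is the orthogonal projection of $(\ell^2(\Gamma))^{n\times 1}$ onto the kernel of the operator $f:(\ell^2(\Gamma))^{n\times 1}\to(\ell^2(\Gamma))^{m\times 1}$. Assuming $\mdim(\widehat{\cM})=0$, we get $\tr_{\cN\Gamma}P_f=0$. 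Since $\tr_{\cN\Gamma}$ is a faithful positive trace on $M_n(\cN\Gamma)$ and $P_f\ge 0$, this forces $P_f=0$; equivalently, $f$ is injective as a bounded operator on $(\ell^2(\Gamma))^{n\times 1}$.

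The second step converts this injectivity into a finiteness statement for entropy. Here the plan is to invoke the entropy formula for finitely presented algebraic actions developed in \cite{LT} (the companion paper advertised in the introduction as the entropy/$L^2$-torsion analogue of the present work): when the presentation matrix $f$ is $\ell^2$-injective, $\rh(\widehat{\cM})$ equals the logarithm of the Fuglede--Kadison determinant of $f$ viewed as an element of $M_{m,n}(\cN\Gamma)$. Since the spectrum of $|f|$ lies in $[0,\|f\|]$, this determinant is bounded above by $\|f\|^n$, so $\rh(\widehat{\cM})\le n\log^+\|f\|<+\infty$, completing the proof.

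The main obstacle is the second step: the precise citation and formulation of the entropy equality for finitely presented modules with $\ell^2$-injective presentation. Once that is in hand, the rest is a one-line chain $\mdim=\vrank=\tr_{\cN\Gamma}P_f$ together with the faithfulness of the trace. It is worth noting that the chain is essentially dual to the one used in Section~\ref{SS-finitely presented case}: Lemma~\ref{L-ker and dim} was extracted precisely so that the vanishing of $\mdim(\widehat{\cM})$ translates verbatim into the injectivity hypothesis needed by the entropy formula.
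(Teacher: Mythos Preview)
Your proof is correct and follows essentially the same route as the paper. The paper's own proof is a two-line citation: it uses Theorem~\ref{T-main} to convert $\mdim(\widehat{\cM})=0$ into $\vrank(\cM)=0$, and then quotes \cite[Remark~5.2]{LT} for the equivalence ``$\vrank(\cM)=0$ if and only if $\rh(\widehat{\cM})<+\infty$'' in the finitely presented case. Your argument simply unpacks what lies behind that remark: the ``if'' direction you route through Lindenstrauss--Weiss (recalled at the start of Section~\ref{S-zero}) rather than through \cite{LT}, and for the ``only if'' direction you spell out the chain $\vrank(\cM)=\tr_{\cN\Gamma}P_f=0\Rightarrow P_f=0$ (faithfulness) $\Rightarrow f$ is $\ell^2$-injective $\Rightarrow \rh(\widehat{\cM})=\log\det_{\cN\Gamma}f\le n\log^+\|f\|$. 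This is precisely the content of the cited remark in \cite{LT}, so there is no genuine difference in approach---your version is just more explicit about the mechanism.
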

\begin{proof} By Theorem~\ref{T-main} we know that $\mdim(\widehat{\cM})=0$ if and only if $\vrank(\cM)=0$,  while by  \cite[Remark 5.2]{LT} for any finitely presented $\Zb\Gamma$-module $\cM$ one has $\vrank(\cM)=0$ if and only if $\rh(\widehat{\cM})$ is finite.
\end{proof}

\begin{remark} \label{R-zero}
Corollary~\ref{C-entropy and mdim finitely presented} does not hold for arbitrary $\Zb\Gamma$-modules.
For example, for $\cM=\bigoplus_{n\ge 2}(\Zb/n\Zb)\Gamma$, one has $\mdim(\widehat{\cM})=\mrank(\cM)=0$ since $\cM$ is torsion as an abelian group,
while  $\rh(\widehat{\cM})=+\infty$ since $\rh(\widehat{\cM})\ge \rh(\widehat{(\Zb/n\Zb)\Gamma})=\log n$ for every $n\ge 2$. As another example, if we take $\Gamma=\Zb$ and $\cM$ to be the direct sum of infinite copies of $\Zb\Gamma/\Zb\Gamma f$ for $f=1+2T$, where we identify $\Zb\Gamma$ with $\Zb[T^\pm]$ naturally, then $\mdim(\widehat{\cM})=\mrank(\cM)=0$ since $\mrank(\Zb\Gamma/\Zb\Gamma f)=0$, while $\rh(\widehat{\cM})=+\infty$ since $\rh(\widehat{\Zb\Gamma/\Zb\Gamma f})=\log 2>0$ \cite[Propositions 16.1 and 17.2]{Schmidt}. In the second example, $\cM$ is torsion-free as an abelian  group.
\end{remark}

None of  the above examples is finitely generated. This leads us to the following question:

\begin{question} \label{Q-finitely generated zero}
For any finitely generated $\Zb\Gamma$-module $\cM$, if $\mdim(\widehat{\cM})=0$, then must $\rh(\widehat{\cM})$ be finite?
\end{question}

We answer Question~\ref{Q-finitely generated zero} affirmatively for the groups in Theorem~\ref{T-vrank rational}:

\begin{corollary} \label{C-entropy and mdim finitely generated}
Let $\Gamma$ be an elementary amenable group with an upper bound on the orders of the finite subgroups of $\Gamma$.
Let $\cM$ be a finitely generated $\Zb\Gamma$-module. Then $\mdim(\widehat{\cM})=0$ if and only if $\rh(\widehat{\cM})<+\infty$.
\end{corollary}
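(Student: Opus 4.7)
The ``if'' direction is the Lindenstrauss--Weiss result already cited in Section~\ref{S-zero}, so the content of the corollary lies in the ``only if'' direction. My plan is to reduce to the finitely presented case (Corollary~\ref{C-entropy and mdim finitely presented}) by exploiting the rationality/discreteness provided by Theorem~\ref{T-Linnell}. Write $\cM = (\Zb\Gamma)^n/\cM_\infty$ and express $\cM_\infty$ as an increasing union of finitely generated submodules $\{\cM_j\}_{j\in\Nb}$. Then each quotient $\cM^{(j)} := (\Zb\Gamma)^n/\cM_j$ is finitely presented, and there are surjections $\cM^{(j)} \twoheadrightarrow \cM$ which dualize to closed $\Gamma$-equivariant embeddings $\widehat{\cM} \hookrightarrow \widehat{\cM^{(j)}}$.

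By Lemma~\ref{L-decreasing limit mrank}, $\mrank(\cM^{(j)}) \downarrow \mrank(\cM)$, and by hypothesis plus Theorem~\ref{T-mdim vs mrank} we have $\mrank(\cM) = \mdim(\widehat{\cM}) = 0$. Now the key point: because $\Gamma$ is elementary amenable with an upper bound $L$ on the orders of its finite subgroups, one has $\cH(\Gamma) \subseteq \tfrac{1}{L!}\Zb$, hence $\cH(\Gamma)$ is a discrete subset of $\Rb$. By Corollary~\ref{C-mdim rational} each $\mrank(\cM^{(j)})$ lies in $\cH(\Gamma)$, so the decreasing convergence $\mrank(\cM^{(j)}) \downarrow 0$ is eventually stationary: $\mrank(\cM^{(j_0)}) = 0$ for some $j_0$. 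Applying Theorem~\ref{T-mdim vs mrank} again gives $\mdim(\widehat{\cM^{(j_0)}}) = 0$, and since $\cM^{(j_0)}$ is finitely presented, Corollary~\ref{C-entropy and mdim finitely presented} yields $\rh(\widehat{\cM^{(j_0)}}) < +\infty$.

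Finally, $\widehat{\cM}$ sits inside $\widehat{\cM^{(j_0)}}$ as a closed $\Gamma$-invariant subgroup, and topological entropy is monotone under passage to closed invariant subsystems (any finite open cover of $\widehat{\cM}$ extends to one of $\widehat{\cM^{(j_0)}}$ by adjoining the complement, and minimal subcovers of the joinings satisfy the corresponding inequality). Therefore $\rh(\widehat{\cM}) \le \rh(\widehat{\cM^{(j_0)}}) < +\infty$, as required.

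The main obstacle I anticipate is ensuring rigorously that each step in the chain $\mrank(\cM^{(j)})\to 0$, $\cH(\Gamma)$ discrete, and ``eventually stationary'' is justified without appealing to hypotheses we do not have; in particular, the discreteness of $\cH(\Gamma)$ genuinely uses the upper bound on orders of finite subgroups (not just elementary amenability), and the eventual vanishing—rather than mere smallness—of $\mrank(\cM^{(j_0)})$ is what converts a mean-dimension statement into the finitely presented hypothesis needed for Corollary~\ref{C-entropy and mdim finitely presented}. Everything else is bookkeeping with Pontryagin duality and the monotonicity of $\rh$.
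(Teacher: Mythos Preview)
Your proof is correct and follows essentially the same route as the paper's: write $\cM=(\Zb\Gamma)^n/\cM_\infty$, approximate $\cM_\infty$ by finitely generated submodules $\cM_j$, use the decreasing-limit lemma plus discreteness of $\cH(\Gamma)$ to force the rank of some finitely presented quotient $(\Zb\Gamma)^n/\cM_{j_0}$ to vanish, invoke Corollary~\ref{C-entropy and mdim finitely presented}, and finish by monotonicity of entropy. The only cosmetic difference is that the paper phrases the limit step via $\vrank$ (Lemma~\ref{L-decreasing limit vrank} and Theorem~\ref{T-vrank rational}) whereas you phrase it via $\mrank$ (Lemma~\ref{L-decreasing limit mrank} and Corollary~\ref{C-mdim rational}); these are interchangeable by Theorem~\ref{T-main}.
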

\begin{proof} We just need to show the ``only if'' part.
We have $\cM=(\Zb\Gamma)^m/\cM_\infty$ for some $m\in \Nb$ and some submodule $\cM_\infty$ of $(\Zb\Gamma)^m$.
Write $\cM_\infty$ as the union of an increasing net of finitely generated submodules $\{\cM_n\}_{n\in J}$ with $\infty\not \in J$.  Then
\begin{align*}
0=\mdim(\widehat{\cM})\overset{Theorem~\ref{T-main}}=\vrank(\cM)\overset{Lemma~\ref{L-decreasing limit vrank}}=\lim_{n\to \infty} \vrank((\Zb\Gamma)^m/\cM_n).
\end{align*}
By Theorem~\ref{T-vrank rational} we know that $\vrank((\Zb\Gamma)^m/\cM_n)$ is contained in the discrete subset $\cH(\Gamma)$ of $\Rb$ for every $n\in J$.
Thus, when $n$ is sufficiently large, one has $\vrank((\Zb\Gamma)^m/\cM_n)=0$.
Since $(\Zb\Gamma)^m/\cM_n$ is a finitely presented $\Zb\Gamma$-module, by Corollary~\ref{C-entropy and mdim finitely presented} we have $\rh(\widehat{(\Zb\Gamma)^m/\cM_n})<+\infty$. Because $\widehat{\cM}$ is a closed $\Gamma$-invariant subset of $\widehat{(\Zb\Gamma)^m/\cM_n}$, we get
$\rh(\widehat{\cM})\le \rh(\widehat{(\Zb\Gamma)^m/\cM_n})<+\infty$.
\end{proof}

Since every countable $\Zb\Gamma$-module is the union of an increasing sequence of finitely generated submodules, and by Pontryagin duality every $\Gamma$-action on a compact metrizable abelian group by continuous automorphisms arises from some countable $\Zb\Gamma$-module, from Corollaries~\ref{C-addition mdim} and \ref{C-entropy and mdim finitely generated} we answer Question~\ref{Q-zero} affirmatively for algebraic actions of groups in Theorem~\ref{T-vrank rational}:

\begin{corollary} \label{C-zero}
Let $\Gamma$ be an elementary amenable group with an upper bound on the orders of the finite subgroups of $\Gamma$, and let $\Gamma$ act on a compact metrizable abelian group $X$ by continuous automorphisms. Then $\mdim(X)=0$ if and only if the action $\Gamma\curvearrowright X$ is the inverse limit of actions with finite topological entropy.
\end{corollary}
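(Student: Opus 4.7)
The plan is to handle the two directions separately. The ``if'' direction is exactly Lindenstrauss's result \cite[Proposition 6.11]{Lindenstrauss}, which was already mentioned in the introduction to Question~\ref{Q-zero} and does not require the hypotheses on $\Gamma$; so I would merely cite it. All the work goes into the ``only if'' direction.

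For the ``only if'' direction, I would start by invoking Pontryagin duality to write $X=\widehat{\cM}$ for some countable (left) $\Zb\Gamma$-module $\cM$. Since $\cM$ is countable, I can choose an increasing sequence $\{\cM_n\}_{n\in\Nb}$ of finitely generated $\Zb\Gamma$-submodules of $\cM$ whose union is $\cM$. Dualizing the inclusions $\cM_n\hookrightarrow \cM$ gives surjective $\Gamma$-equivariant continuous homomorphisms $X=\widehat{\cM}\twoheadrightarrow \widehat{\cM_n}$, and standard Pontryagin duality identifies $X$ with the inverse limit $\varprojlim_{n\to\infty}\widehat{\cM_n}$ of compact metrizable abelian groups carrying continuous $\Gamma$-actions by automorphisms.

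The task is then to show that each action $\Gamma\curvearrowright \widehat{\cM_n}$ has finite topological entropy. For each $n$, the surjection $\widehat{\cM}\twoheadrightarrow \widehat{\cM_n}$ fits into a short exact sequence of compact metrizable abelian groups with continuous $\Gamma$-actions by automorphisms, so Corollary~\ref{C-addition mdim} gives
$$\mdim(\widehat{\cM_n})\le \mdim(\widehat{\cM})=\mdim(X)=0,$$
hence $\mdim(\widehat{\cM_n})=0$. Because $\cM_n$ is finitely generated and $\Gamma$ satisfies the hypotheses of Corollary~\ref{C-entropy and mdim finitely generated}, that corollary then yields $\rh(\widehat{\cM_n})<+\infty$, as desired.

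There is no real obstacle once the two prior corollaries are in hand: the whole argument is essentially a bookkeeping step that reduces the problem via a countable inverse-limit presentation of $X$ to the finitely-generated case, where monotonicity of mean dimension under algebraic factors (Corollary~\ref{C-addition mdim}) together with the finite-entropy criterion (Corollary~\ref{C-entropy and mdim finitely generated}) finish the job. The only subtlety worth double-checking is that the connecting maps in the inverse system are genuine $\Gamma$-equivariant continuous surjective homomorphisms and that the projective limit topology on $\varprojlim \widehat{\cM_n}$ agrees with the topology on $\widehat{\cM}$; both are immediate from Pontryagin duality applied to the directed union $\cM=\bigcup_n \cM_n$.
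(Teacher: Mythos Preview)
Your proposal is correct and follows essentially the same approach as the paper: write $X=\widehat{\cM}$ via Pontryagin duality, exhaust the countable module $\cM$ by an increasing sequence of finitely generated submodules $\cM_n$, use Corollary~\ref{C-addition mdim} to get $\mdim(\widehat{\cM_n})=0$, and then apply Corollary~\ref{C-entropy and mdim finitely generated} to deduce $\rh(\widehat{\cM_n})<+\infty$. The paper in fact does not write out a formal proof but only the one-sentence justification preceding the corollary, so your write-up is simply a more detailed version of that same argument.
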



\appendix
\section{Mean rank and Elek rank} \label{S-Elek}

In this appendix we discuss the relation between the mean rank of a $\Zb\Gamma$-module and the Elek rank of a finitely generated $\Qb\Gamma$-module.

Let $R$ be an integral domain and $\Gamma$ be a discrete amenable group. As we explained in Remark~\ref{R-SVV}, one may extend the definition of mean rank for $\Zb\Gamma$-modules in Section~\ref{S-mean rank} to $R\Gamma$-modules naturally as follows. Denote by $K$ the field of fractions of $R$. For any $R$-module $\sM$, the rank of $\sM$ is defined as $\dim_K(K\otimes_R\sM)$ and denoted by $\rank_R(\sM)$. For any $R$-module $\sM$ and $A\subseteq \sM$, denote by $\left<A\right>_R$ the submodule of $\sM$ generated by $A$. Then we define the {\it mean rank} of a (left) $R\Gamma$-module $\cM$ as
$$ \mrank_R(\cM):=\sup_{A\in \cF(\cM)}\lim_F \frac{\rank_R(\left<F^{-1}A\right>_R)}{|F|}=\sup_{A\in \cF(\cM)}\inf_{F\in \cF(\Gamma)} \frac{\rank_R(\left<F^{-1}A\right>_R)}{|F|}.$$
All the results in Section~\ref{S-mean rank} hold for $R\Gamma$-modules without change of proof.

\begin{proposition} \label{P-fraction field}
For any (left) $R\Gamma$-module $\cM$, one has
$$\mrank_R(\cM)=\mrank_K(K\Gamma\otimes_{R\Gamma}\cM). $$
\end{proposition}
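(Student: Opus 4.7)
The plan is to identify the extended module $K\Gamma\otimes_{R\Gamma}\cM$ with $K\otimes_R\cM$ and then match the two mean rank formulas term-by-term, using flatness of $K$ over $R$ together with a ``clearing denominators'' argument.

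First I would set up the identification. Since $K\Gamma=K\otimes_R R\Gamma$ as rings (with $\Gamma$ acting trivially on the $K$-factor), associativity of tensor products gives a canonical $K\Gamma$-module isomorphism
$$K\Gamma\otimes_{R\Gamma}\cM\;\cong\;(K\otimes_R R\Gamma)\otimes_{R\Gamma}\cM\;\cong\;K\otimes_R\cM=:\cN,$$
where $\Gamma$ acts on $\cN$ via $s\cdot(k\otimes a)=k\otimes(sa)$. Because $K$ is the localization of $R$ at $R\setminus\{0\}$, it is flat over $R$; hence for every $R$-submodule $\sN\subseteq\cM$, the induced map $K\otimes_R\sN\to K\otimes_R\cM=\cN$ is injective, so its image, which is precisely the $K$-subspace $K\cdot\overline{\sN}\subseteq\cN$ spanned by $\overline{\sN}:=1\otimes\sN$, has $K$-dimension equal to $\rank_R(\sN)$.

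For the inequality $\mrank_R(\cM)\le\mrank_K(\cN)$, I would take $A\in\cF(\cM)$ and set $\overline{A}:=1\otimes A\in\cF(\cN)$. Since the $\Gamma$-action commutes with $1\otimes(\cdot)$, $F^{-1}\overline{A}$ is the image of $F^{-1}A$, and the $K$-submodule $\langle F^{-1}\overline{A}\rangle_K\subseteq\cN$ coincides with $K\cdot\overline{\langle F^{-1}A\rangle_R}$. By the previous paragraph its $K$-dimension equals $\rank_R\langle F^{-1}A\rangle_R$. Dividing by $|F|$, passing to the limit, and taking suprema yields the desired inequality.

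For the reverse inequality $\mrank_K(\cN)\le\mrank_R(\cM)$, I would take $B\in\cF(\cN)$ and clear denominators uniformly: each $b\in B$ is a finite sum $\sum_i k_i\otimes a_i$ with $k_i\in K$, and writing the $k_i$ over a single denominator and iterating over the (finite) set $B$, we obtain a single $d\in R\setminus\{0\}$ and elements $a_b\in\cM$ with $b=(1/d)\otimes a_b=(1/d)\cdot\overline{a_b}$ for every $b\in B$. Setting $A=\{a_b:b\in B\}\in\cF(\cM)$, the fact that $K$ commutes with the $\Gamma$-action inside $K\Gamma$ gives $\langle F^{-1}B\rangle_K=(1/d)\langle F^{-1}\overline{A}\rangle_K$, so these $K$-subspaces of $\cN$ have the same dimension, which by the first half equals $\rank_R\langle F^{-1}A\rangle_R$. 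Taking limits and suprema yields the opposite inequality, and the proposition follows.

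I do not expect a serious obstacle here; the only mild technical point is to make sure the denominator $d$ can be chosen uniformly over the finite set $B$ (straightforward since $B$ is finite and $K$ is a field of fractions) and that scalar multiplication by $1/d\in K$ commutes with the $\Gamma$-action on $\cN$ (immediate from $K\subseteq K\Gamma$ being central with respect to $\Gamma$).
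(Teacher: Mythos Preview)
Your proof is correct and follows essentially the same route as the paper: both identify $K\Gamma\otimes_{R\Gamma}\cM$ with $K\otimes_R\cM$, invoke flatness of $K$ over $R$ to obtain $\rank_R(\langle F^{-1}A\rangle_R)=\dim_K(\langle F^{-1}\varphi(A)\rangle_K)$, and then match the two mean rank formulas. The only difference is that you spell out the clearing-denominators step to reduce arbitrary $B\in\cF(\cN)$ to $\overline{A}$ with $A\in\cF(\cM)$, whereas the paper compresses this into a single ``It follows that'' after the identification $K\Gamma\otimes_{R\Gamma}\cM=K\otimes_R\cM$; your version is a bit more explicit on exactly this point.
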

\begin{proof} Denote by $\varphi$ the natural $R\Gamma$-module homomorphism $\cM\rightarrow K\Gamma\otimes_{R\Gamma}\cM$ sending $a$ to $1\otimes a$.
Note that
$$ K\Gamma\otimes_{R\Gamma}\cM=(K\otimes_RR\Gamma)\otimes_{R\Gamma}\cM=K\otimes_R(R\Gamma\otimes_{R\Gamma}\cM)=K\otimes_R\cM.$$
It follows that
$$ \mrank_K(K\Gamma\otimes_{R\Gamma}\cM)=\sup_{A\in \cF(\cM)}\lim_F \frac{\rank_K(\left<F^{-1}\varphi(A)\right>_K)}{|F|}.$$
Thus it suffices to show that for any $A\in \cF(\cM)$ and $F\in \cF(\Gamma)$ one has
$$\rank_R(\left<F^{-1}A\right>_R)=\rank_K(\left<F^{-1}\varphi(A)\right>_K).$$
Since the functor $K\otimes_R\cdot$ is exact \cite[Proposition XVI.3.2]{Lang}, from the inclusion $\left<F^{-1}A\right>_R\hookrightarrow \cM$, we have the inclusion
$K\otimes_R(\left<F^{-1}A\right>_R)\hookrightarrow K\otimes_R\cM=K\Gamma\otimes_{R\Gamma}\cM$, with image $\left<F^{-1}\varphi(A)\right>_K$. Therefore
$$ \rank_K(\left<F^{-1}\varphi(A)\right>_K)=\rank_K(K\otimes_R(\left<F^{-1}A\right>_R))=\rank_R(\left<F^{-1}A\right>_R).$$
\end{proof}

For any field $K$ and any discrete amenable group $\Gamma$, in \cite{Elek1} Elek introduced a rank function for finitely generated (left) $K\Gamma$-modules $\cM$ as follows. Write $\cM$ as $(K\Gamma)^{n\times 1}/\cM'$ for some $n\in \Nb$ and some submodule $\cM'$ of $(K\Gamma)^{n\times 1}$. For any vector space $V$ over $K$, denote by $V^*$ the dual vector space of $V$, consisting of all $K$-linear maps $V\rightarrow K$. One may identify $((K\Gamma)^{n\times 1})^*$ with $(K^{1\times n})^\Gamma$ via
$$ f(a)=\sum_{s\in \Gamma}f_s\cdot a_s$$
for all $f\in (K^{1\times n})^\Gamma$  and $a\in (K\Gamma)^{n\times 1}$.
Then one can identify $\cM^*$ with $\{f\in (K^{1\times n})^\Gamma: f(\cM')=0\}$.
For each $F\in \cF(\Gamma)\cup \{\emptyset\}$, denote by $\cM^*|_F$ the image of $\cM^*$ under the restriction map $(K^{1\times n})^\Gamma\rightarrow (K^{1\times n})^F$.
The function $\cF(\Gamma)\cup \{\emptyset\}\rightarrow \Zb$ sending $F$ to $\dim_K(\cM^*|_{F^{-1}})$  is easily seen to satisfy the conditions of the Ornstein-Weiss lemma \cite{OW} \cite[Theorem 6.1]{LW} and hence the limit $\lim_F\frac{\dim_K(\cM^*|_{F^{-1}})}{|F|}$ exists. (Actually one can check that this function $\cF(\Gamma)\cup \{\emptyset\}\rightarrow \Zb$ satisfies the conditions in Lemma~\ref{L-property of rank}, and hence $\lim_F\frac{\dim_K(\cM^*|_{F^{-1}})}{|F|}=\inf_{F\in \cF(\Gamma)}\frac{\dim_K(\cM^*|_{F^{-1}})}{|F|}$, though we shall not need this fact.)
The {\it Elek rank} of $\cM$ is defined to be this limit, and denoted by $\erank_K(\cM)$. The Elek rank satisfies the addition formula \cite{Elek1}.

Note that $\mrank_K(\cM)$ is defined using the module $\cM$ directly, which is crucial for the proof of Theorem~\ref{T-main} since to compute the mean dimension or the von Neumann-L\"{u}ck rank one has to pass to the Pontryagin dual $\widehat{\cM}$ or the tensor product $\cN\Gamma\otimes_{\Zb\Gamma} \cM$ first, while $\erank_K(\cM)$ is defined by passing to the dual $\cM^*$ first. So a priori there is no relation between these two ranks. The fact that these two ranks share some properties such as the addition formula hints that they might be the same.  We show that indeed this is the case:

\begin{theorem} \label{T-Elek rank}
For any finitely generated $K\Gamma$-module $\cM$, one has
$$\mrank_K(\cM)=\erank_K(\cM).$$
\end{theorem}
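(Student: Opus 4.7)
The plan is to compute $\mrank_K(\cM)$ using the canonical generating set of $\cM$, identify it as the dimension of the image of $(K[F^{-1}])^{n\times 1}$ under the quotient map, and then use linear-algebraic duality to match it with $\dim_K(\cM^*|_{F^{-1}})$. Write $\cM = (K\Gamma)^{n\times 1}/\cM'$ for some $K\Gamma$-submodule $\cM'$ and let $A = \{\bar{e}_1, \ldots, \bar{e}_n\}$ be the images of the canonical generators of $(K\Gamma)^{n\times 1}$. Since all arguments of Section~\ref{S-mean rank} transfer verbatim to $R\Gamma$-modules for an integral domain $R$, the $K\Gamma$-analogue of Lemma~\ref{L-finitely generated} reduces $\mrank_K(\cM)$ to $\lim_F |F|^{-1}\rank_K(\langle F^{-1}A\rangle_K)$. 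It therefore suffices to establish the pointwise identity $\rank_K(\langle F^{-1}A\rangle_K) = \dim_K(\cM^*|_{F^{-1}})$ for every $F \in \cF(\Gamma)$.

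For the left-hand side, $\langle F^{-1}A\rangle_K$ is the image of $(K[F^{-1}])^{n\times 1}$ under the quotient map $\pi : (K\Gamma)^{n\times 1} \to \cM$, and the kernel of the restriction of $\pi$ to this subspace is $\cM' \cap (K[F^{-1}])^{n\times 1}$, so $\rank_K(\langle F^{-1}A\rangle_K) = n|F| - \dim_K(\cM' \cap (K[F^{-1}])^{n\times 1})$. For the right-hand side, $\cM^*|_{F^{-1}}$ lies in the $n|F|$-dimensional space $(K^{1\times n})^{F^{-1}}$. I intend to show that in the standard non-degenerate pairing of $(K[F^{-1}])^{n\times 1}$ with $(K^{1\times n})^{F^{-1}}$, the annihilator of $\cM^*|_{F^{-1}}$ equals $\cM' \cap (K[F^{-1}])^{n\times 1}$; this forces the two dimensions to sum to $n|F|$, yielding the pointwise identity.

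The duality identity is the heart of the argument. Since elements of $(K[F^{-1}])^{n\times 1}$ are supported on $F^{-1}$, the value $\sum_s f_s a_s$ depends only on $f|_{F^{-1}}$, so the annihilator of $\cM^*|_{F^{-1}}$ coincides with $(K[F^{-1}])^{n\times 1} \cap (\cM^*)^{\perp}$, where this latter annihilator is formed in the full algebraic pairing between $(K\Gamma)^{n\times 1}$ and its algebraic dual $(K^{1\times n})^\Gamma$. Everything thus reduces to the double-annihilator identity $(\cM^*)^{\perp} = \cM'$ in this pairing. The inclusion $\supseteq$ is immediate from the definition of $\cM^*$; for $\subseteq$, if $a \notin \cM'$ then $\pi(a) \ne 0$ in the $K$-vector space $\cM$, so a $K$-linear functional on $\cM$ separates $\pi(a)$ from $0$, and pulling it back along $\pi$ produces an element of $\cM^*$ nonvanishing at $a$. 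This uses only elementary vector-space theory, and because $a$ has finite support the pairing formula is a finite sum, so no topological or Hahn-Banach-type machinery is needed. The main obstacle is conceptual: carefully setting up the pairing between the finitely supported space $(K\Gamma)^{n\times 1}$ and the full product $(K^{1\times n})^\Gamma$ and distinguishing it from the intermediate pairing on $F^{-1}$. Once the annihilator identity is in hand, the pointwise identity and hence $\mrank_K(\cM) = \erank_K(\cM)$ follow upon dividing by $|F|$ and passing to the limit along Følner sets.
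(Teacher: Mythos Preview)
Your proposal is correct and follows essentially the same strategy as the paper: reduce via the $K\Gamma$-analogue of Lemma~\ref{L-finitely generated} to the pointwise identity $\dim_K(\langle F^{-1}A\rangle_K)=\dim_K(\cM^*|_{F^{-1}})$, and prove that identity by elementary linear algebra resting on the fact that every $K$-linear functional on a subspace extends. The only difference is cosmetic: the paper identifies $\cM^*|_{F^{-1}}$ directly with the dual of $\langle F^{-1}A\rangle_K$ via the restriction map $T:\cM^*\to K^{F^{-1}A}$, whereas you compute both dimensions as $n|F|-\dim_K\bigl(\cM'\cap (K[F^{-1}])^{n\times 1}\bigr)$ through rank--nullity and the annihilator identity $(\cM^*)^\perp=\cM'$; these are two packagings of the same duality.
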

\begin{proof} Denote by $A$ the image of the standard $K\Gamma$-basis of $(K\Gamma)^{n\times 1}$ under the quotient map $(K\Gamma)^{n\times 1}\rightarrow \cM$.
Then $A$ is a finite generating subset of $\cM$. By Lemma~\ref{L-finitely generated} it suffices to show
$$\lim_F\frac{\dim_K(\left<F^{-1}A\right>_K)}{|F|}=\lim_F\frac{\dim_K(\cM^*|_{F^{-1}})}{|F|}.$$
Therefore it suffices to show $\dim_K(\left<F^{-1}A\right>_K)=\dim_K(\cM^*|_{F^{-1}})$ for every $F\in \cF(\Gamma)$.
Denote  by $T$ the linear map
$\cM^*\rightarrow K^{F^{-1}A}$ sending $f$ to $b\mapsto f(b)$ for all $b\in F^{-1}A$.
 Since every $K$-linear map $\left<F^{-1}A\right>_K\rightarrow K$ extends to a $K$-linear map $\cM\rightarrow K$, one has
 $$\dim_K(\left<F^{-1}A\right>_K)=\dim_K((\left<F^{-1}A\right>_K)^*)=\dim_K(T(\cM^*)).$$
One may identify $T(\cM^*)$ with $\cM^*|_{F^{-1}}$ naturally. Therefore
$$  \dim_K(\left<F^{-1}A\right>_K)=\dim_K(T(\cM^*))=\dim_K(\cM^*|_{F^{-1}}).$$
\end{proof}

From Proposition~\ref{P-fraction field} and Theorem~\ref{T-Elek rank} we obtain

\begin{corollary} \label{C-Elek rank}
For any finitely generated $\Zb\Gamma$-module $\cM$, one has
$$\mrank(\cM)=\erank_\Qb(\Qb\Gamma\otimes_{\Zb\Gamma}\cM).$$
\end{corollary}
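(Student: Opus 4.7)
The proposal is very short: Corollary~\ref{C-Elek rank} is a direct consequence of the two results immediately preceding it, so the plan is simply to chain them. First I would apply Proposition~\ref{P-fraction field} in the special case $R=\Zb$, so that its field of fractions is $K=\Qb$. The proposition then yields the identity
$$\mrank(\cM)=\mrank_\Qb\bigl(\Qb\Gamma\otimes_{\Zb\Gamma}\cM\bigr),$$
with no hypothesis on $\cM$ beyond being a $\Zb\Gamma$-module.

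Second, I would apply Theorem~\ref{T-Elek rank} to the $\Qb\Gamma$-module on the right-hand side. For this to be legal I need $\Qb\Gamma\otimes_{\Zb\Gamma}\cM$ to be finitely generated over $\Qb\Gamma$; but if $a_1,\dots,a_n$ generate $\cM$ over $\Zb\Gamma$, then $1\otimes a_1,\dots,1\otimes a_n$ generate $\Qb\Gamma\otimes_{\Zb\Gamma}\cM$ over $\Qb\Gamma$, so this hypothesis is free. Theorem~\ref{T-Elek rank} (with $K=\Qb$) then gives
$$\mrank_\Qb\bigl(\Qb\Gamma\otimes_{\Zb\Gamma}\cM\bigr)=\erank_\Qb\bigl(\Qb\Gamma\otimes_{\Zb\Gamma}\cM\bigr),$$
and concatenating the two displayed equalities yields the corollary.

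There is essentially no obstacle here; the only thing to check is the finite generation of $\Qb\Gamma\otimes_{\Zb\Gamma}\cM$, which is immediate. All the real work has already been done in Proposition~\ref{P-fraction field} (identifying $\Qb\Gamma\otimes_{\Zb\Gamma}\cM$ with $\Qb\otimes_\Zb\cM$ and using exactness of $\Qb\otimes_\Zb\cdot$ to compare $\rank_\Zb$ with $\dim_\Qb$ on subgroups of the form $\left<F^{-1}A\right>$) and in Theorem~\ref{T-Elek rank} (the duality argument comparing $\dim_K(\left<F^{-1}A\right>_K)$ with $\dim_K(\cM^*|_{F^{-1}})$ via the extension of linear functionals from $\left<F^{-1}A\right>_K$ to all of $\cM$).
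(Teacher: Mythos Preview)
Your proposal is correct and matches the paper's own proof, which simply states that the corollary follows from Proposition~\ref{P-fraction field} and Theorem~\ref{T-Elek rank}. Your added remark verifying that $\Qb\Gamma\otimes_{\Zb\Gamma}\cM$ is finitely generated over $\Qb\Gamma$ is the only detail one might wish to make explicit, and you have handled it correctly.
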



\end{document}